
\documentclass[reqno,a4paper]{amsart}

\usepackage{amsthm}
\usepackage{amsmath}
\usepackage[all]{xy} \SelectTips{eu}{}
\usepackage{latexsym,amsfonts,amssymb}
\usepackage{hyperref}
\usepackage{mathptmx}
\usepackage{nicefrac}
\usepackage{array}
\usepackage{xcolor}
\usepackage{mathrsfs}
\usepackage{rotating}

\renewcommand{\theequation}{$\smash{\sharp}\mspace{0.5mu}$\arabic{equation}}



\newcommand{\numberseries}{\mdseries}   

\newlength{\thmtopspace}                
\newlength{\thmbotspace}                
\newlength{\thmheadspace}               
\newlength{\thmindent}                  

\setlength{\thmtopspace}{0.8\baselineskip plus 0.5\baselineskip minus 0.2\baselineskip}
\setlength{\thmbotspace}{0.4\baselineskip plus 0.2\baselineskip minus 0.1\baselineskip} 
\setlength{\thmheadspace}{0.5em}
\setlength{\thmindent}{0pt}


\newtheoremstyle{bfupright head,slanted body}
                {\thmtopspace}{\thmbotspace}
                {\slshape}{\thmindent}{\bfseries}{.}{\thmheadspace}
                {{\numberseries \thmnumber{{\bf #2} }}\thmnote{#3}}

\newtheoremstyle{bfupright head,upright body}
                {\thmtopspace}{\thmbotspace}
                {\upshape}{\thmindent}{\bfseries}{.}{\thmheadspace}
                {{\numberseries \thmnumber{{\bf #2} }}\thmnote{#3}}

\newtheoremstyle{bfit head,upright body}
                {\thmtopspace}{\thmbotspace}
                {\upshape}{\thmindent}{\upshape}{.}{\thmheadspace}
                {{\numberseries\thmnumber{{\bf #2} }}
                {\bfseries\itshape\thmnote{\negthickspace#3}}}

\newtheoremstyle{it head,upright body}
                {\thmtopspace}{\thmbotspace}
                {\upshape}{\thmindent}{\upshape}{.}{\thmheadspace}
                {{\numberseries\thmnumber{{\bf #2} }}
                {\itshape\thmnote{\negthickspace#3}}}


\newtheoremstyle{fixed bf head,slanted body}
                {\thmtopspace}{\thmbotspace}{\slshape}
                {\thmindent}{\bfseries}{.}{\thmheadspace}
                {{\numberseries \thmnumber{{\bf #2} }}\thmname{#1}\thmnote{ (#3)}}

\newtheoremstyle{fixed bf head,upright body}
                {\thmtopspace}{\thmbotspace}{\upshape}
                {\thmindent}{\bfseries}{.}{\thmheadspace}
                {{\numberseries \thmnumber{{\bf #2} }}\thmname{#1}\thmnote{ (#3)}}


\newtheoremstyle{independent paragraph}
                {\thmtopspace}{\thmbotspace}
                {\upshape}{\thmindent}{\upshape}{}{0pt}
                {\thmnote{#3 }}

\newtheoremstyle{subparagraph}
                {\thmbotspace}{\thmbotspace}
                {\upshape}{\thmindent}{\upshape}{}{0pt}
                {\thmnote{#3 }}

\newtheoremstyle{notes}
                {\thmtopspace}{\thmbotspace}
                {\ttfamily}{\thmindent}{\ttfamily\small }{}{0pt}
                {\thmnote{#3 }}



\theoremstyle{bfupright head,slanted body}
\newtheorem{res}{}[section]             \newtheorem*{res*}{}

\theoremstyle{bfit head,upright body}
                 \newtheorem*{com*}{}

\theoremstyle{bfupright head,upright body}
\newtheorem{bfhpg}[res]{}               \newtheorem*{bfhpg*}{}

\theoremstyle{it head,upright body}
               \newtheorem*{ithpg*}{}


\theoremstyle{fixed bf head,slanted body}
\newtheorem{thm}[res]{Theorem}          \newtheorem*{thm*}{Theorem}
\newtheorem{prp}[res]{Proposition}      \newtheorem*{prp*}{Proposition}
\newtheorem{cor}[res]{Corollary}        \newtheorem*{cor*}{Corollary}
\newtheorem{lem}[res]{Lemma}            \newtheorem*{lem*}{Lemma}

\theoremstyle{fixed bf head,upright body}
\newtheorem{dfn}[res]{Definition}       \newtheorem*{dfn*}{Definition}
\newtheorem{obs}[res]{Observation}      \newtheorem*{obs*}{Observation}
\newtheorem{rmk}[res]{Remark}           \newtheorem*{rmk*}{Remark}
\newtheorem{exa}[res]{Example}          \newtheorem*{exa*}{Example}
         \newtheorem*{exe*}{Exercise}
\newtheorem{stp}[res]{Setup}            \newtheorem{stp*}{Setup}


\theoremstyle{independent paragraph}

\theoremstyle{subparagraph}

\theoremstyle{notes}



\newlength{\thmlistleft}        
\newlength{\thmlistright}       
\newlength{\thmlistpartopsep}   
\newlength{\thmlisttopsep}      
\newlength{\thmlistparsep}      
\newlength{\thmlistitemsep}     

\setlength{\thmlistleft}{2.5em}
\setlength{\thmlistright}{0pt}
\setlength{\thmlistitemsep}{0.5ex}
\setlength{\thmlistparsep}{0pt}
\setlength{\thmlisttopsep}{1.5\thmlistitemsep}
\setlength{\thmlistpartopsep}{0pt}


\newcounter{eqc} 
\newenvironment{eqc}{\begin{list}{\upshape (\textit{\roman{eqc}})}%
    {\usecounter{eqc}%
      \setlength{\leftmargin}{\thmlistleft}%
      \setlength{\labelwidth}{\thmlistleft}%
      \setlength{\rightmargin}{\thmlistright}%
      \setlength{\partopsep}{\thmlistpartopsep}%
      \setlength{\topsep}{\thmlisttopsep}%
      \setlength{\parsep}{\thmlistparsep}%
      \setlength{\itemsep}{\thmlistitemsep}}}%
  {\end{list}}%

\newcommand{\eqclbl}[1]{{\upshape(\textit{#1})}}




\newcounter{prt}
\newenvironment{prt}{\begin{list}{\upshape (\alph{prt})}%
    {\usecounter{prt}%
      \setlength{\leftmargin}{\thmlistleft}%
      \setlength{\labelwidth}{\thmlistleft}%
      \setlength{\rightmargin}{\thmlistright}%
      \setlength{\partopsep}{\thmlistpartopsep}%
      \setlength{\topsep}{\thmlisttopsep}%
      \setlength{\parsep}{\thmlistparsep}%
      \setlength{\itemsep}{\thmlistitemsep}}}%
  {\end{list}}%

\newcommand{\prtlbl}[1]{{\upshape(#1)}}



\newcounter{rqm}
\newenvironment{rqm}{\begin{list}{\upshape (\arabic{rqm})}%
    {\usecounter{rqm}%
      \setlength{\leftmargin}{\thmlistleft}%
      \setlength{\labelwidth}{\thmlistleft}%
      \setlength{\rightmargin}{\thmlistright}%
      \setlength{\partopsep}{\thmlistpartopsep}%
      \setlength{\topsep}{\thmlisttopsep}%
      \setlength{\parsep}{\thmlistparsep}%
      \setlength{\itemsep}{\thmlistitemsep}}}%
  {\end{list}}%

\newcommand{\rqmlbl}[1]{{\upshape(#1)}}



%
  {\end{list}}%


\newenvironment{itemlist}{\nopagebreak \begin{list}{$\bullet$}%
    {\setlength{\leftmargin}{\thmlistleft}%
      \setlength{\labelwidth}{\thmlistleft}%
      \setlength{\rightmargin}{\thmlistright}%
      \setlength{\partopsep}{\thmlistpartopsep}%
      \setlength{\topsep}{\thmlisttopsep}%
      \setlength{\parsep}{\thmlistparsep}%
      \setlength{\itemsep}{\thmlistitemsep}}}%
  {\end{list}}%


%




  \newcommand{\proofoftag}[2][:]{(#2)#1}


\newcommand{\pgref}[1]{\ref{#1}}

\renewcommand{\eqref}[1]{(\pgref{eq:#1})}

\newcommand{\corref}[2][Corollary ]{#1\pgref{cor:#2}}
\newcommand{\dfnref}[2][Definition ]{#1\pgref{dfn:#2}}
\newcommand{\exaref}[2][Example ]{#1\pgref{exa:#2}}
\newcommand{\lemref}[2][Lemma ]{#1\pgref{lem:#2}}
\newcommand{\obsref}[2][Observation ]{#1\pgref{obs:#2}}
\newcommand{\prpref}[2][Proposition ]{#1\pgref{prp:#2}}

\newcommand{\rmkref}[2][Remark ]{#1\pgref{rmk:#2}}
\newcommand{\thmref}[2][Theorem ]{#1\pgref{thm:#2}}
\newcommand{\stpref}[2][Setup ]{#1\pgref{stp:#2}}
\newcommand{\secref}[2][Section ]{#1\pgref{sec:#2}}

\makeatletter
\def\@nobreak@#1{\mathchoice%
  {\nobreakdef@\displaystyle\f@size{#1}}%
  {\nobreakdef@\nobreakstyle\tf@size{\firstchoice@false #1}}%
  {\nobreakdef@\nobreakstyle\sf@size{\firstchoice@false #1}}%
  {\nobreakdef@\nobreakstyle\ssf@size{\firstchoice@false #1}}%
  \check@mathfonts}%
\def\nobreakdef@#1#2#3{\hbox{{%
                    \everymath{#1}%
                    \let\f@size#2\selectfont%
                    #3}}}%
\makeatother

\DeclareFontFamily{T1}{cmex}{}
\DeclareFontShape{T1}{cmex}{m}{n}{<-> s * [0.89] cmex10}{}
\DeclareSymbolFont{cmlargesymbols}{T1}{cmex}{m}{n}
\DeclareMathSymbol{\mycoprod}{\mathop}{cmlargesymbols}{"60} 
\DeclareMathSymbol{\myprod}{\mathop}{cmlargesymbols}{"51} \let\prod\myprod

\DeclareSymbolFont{usualmathcal}{OMS}{cmsy}{m}{n}
\DeclareSymbolFontAlphabet{\mathcal}{usualmathcal}

\DeclareSymbolFont{letters}{OML}{txmi}{m}{it}

\DeclareMathSymbol{\alpha}{\mathord}{letters}{"0B}
\DeclareMathSymbol{\beta}{\mathord}{letters}{"0C}
\DeclareMathSymbol{\gamma}{\mathord}{letters}{"0D}
\DeclareMathSymbol{\sigma}{\mathord}{letters}{"0E}
\DeclareMathSymbol{\epsilon}{\mathord}{letters}{"0F}
\DeclareMathSymbol{\zeta}{\mathord}{letters}{"10}
\DeclareMathSymbol{\eta}{\mathord}{letters}{"11}
\DeclareMathSymbol{\theta}{\mathord}{letters}{"12}
\DeclareMathSymbol{\iota}{\mathord}{letters}{"13}
\DeclareMathSymbol{\kappa}{\mathord}{letters}{"14}
\DeclareMathSymbol{\lambda}{\mathord}{letters}{"15}
\DeclareMathSymbol{\mu}{\mathord}{letters}{"16}
\DeclareMathSymbol{\nu}{\mathord}{letters}{"17}
\DeclareMathSymbol{\xi}{\mathord}{letters}{"18}
\DeclareMathSymbol{\pi}{\mathord}{letters}{"19}
\DeclareMathSymbol{\rho}{\mathord}{letters}{"1A}
\DeclareMathSymbol{\sigma}{\mathord}{letters}{"1B}
\DeclareMathSymbol{\tau}{\mathord}{letters}{"1C}
\DeclareMathSymbol{\upsilon}{\mathord}{letters}{"1D}
\DeclareMathSymbol{\phi}{\mathord}{letters}{"1E}
\DeclareMathSymbol{\chi}{\mathord}{letters}{"1F}
\DeclareMathSymbol{\psi}{\mathord}{letters}{"20}
\DeclareMathSymbol{\omega}{\mathord}{letters}{"21}
\DeclareMathSymbol{\varepsilon}{\mathord}{letters}{"22}
\DeclareMathSymbol{\vartheta}{\mathord}{letters}{"23}
\DeclareMathSymbol{\varpi}{\mathord}{letters}{"24}
\DeclareMathSymbol{\varrho}{\mathord}{letters}{"25}
\DeclareMathSymbol{\varsigma}{\mathord}{letters}{"26}
\DeclareMathSymbol{\varphi}{\mathord}{letters}{"27}

\DeclareMathSymbol{\Gamma}{\mathord}{letters}{"00}
\DeclareMathSymbol{\Delta}{\mathord}{letters}{"01}
\DeclareMathSymbol{\Theta}{\mathord}{letters}{"02}
\DeclareMathSymbol{\Lambda}{\mathord}{letters}{"03}
\DeclareMathSymbol{\Xi}{\mathord}{letters}{"04}
\DeclareMathSymbol{\Pi}{\mathord}{letters}{"05}
\DeclareMathSymbol{\Sigma}{\mathord}{letters}{"06}
\DeclareMathSymbol{\Upsilon}{\mathord}{letters}{"07}
\DeclareMathSymbol{\upPhi}{\mathord}{letters}{"08}
\DeclareMathSymbol{\upPsi}{\mathord}{letters}{"09}
\DeclareMathSymbol{\Omega}{\mathord}{letters}{"0A}

\DeclareMathSymbol{\upGamma}{\mathalpha}{operators}{"00}
\DeclareMathSymbol{\upDelta}{\mathalpha}{operators}{"01}
\DeclareMathSymbol{\upTheta}{\mathalpha}{operators}{"02}
\DeclareMathSymbol{\upLambda}{\mathalpha}{operators}{"03}
\DeclareMathSymbol{\upXi}{\mathalpha}{operators}{"04}
\DeclareMathSymbol{\upPi}{\mathalpha}{operators}{"05}
\DeclareMathSymbol{\upSigma}{\mathalpha}{operators}{"06}
\DeclareMathSymbol{\upUpsilon}{\mathalpha}{operators}{"07}
\DeclareMathSymbol{\upPhi}{\mathalpha}{operators}{"08}
\DeclareMathSymbol{\upPsi}{\mathalpha}{operators}{"09}
\DeclareMathSymbol{\upOmega}{\mathalpha}{operators}{"0A}

\DeclareMathAlphabet\PazoBB{U}{fplmbb}{m}{n}%

\newcommand{\V}{\mathcal{V}}
\newcommand{\A}{\mathcal{A}}
\newcommand{\Ab}{\mathsf{Ab}}
\newcommand{\Set}{\mathsf{Set}}
\newcommand{\Hom}{\mathrm{Hom}}
\newcommand{\Coker}[1]{\operatorname{Cok}#1}
\newcommand{\lCocon}[3][\lambda]{#1\text{-}\mathrm{Cocont}(#2,#3)}
\newcommand{\lCon}[3][\lambda]{#1\text{-}\mathrm{Cont}(#2,#3)}
\newcommand{\lFlat}[3][\lambda]{#1\text{-}\mathrm{Flat}(#2,#3)}
\newcommand{\lPres}[2][\lambda]{\mathrm{Pres}_{#1}(#2)}
\newcommand{\fp}[1]{\mathrm{fp}(#1)}
\newcommand{\PureInj}[1]{\mathrm{PureInj}_\otimes(#1)}
\newcommand{\Inj}[1]{\mathrm{Inj}(#1)}
\newcommand{\AbsPure}[1]{\mathrm{AbsPure}(#1)}

\newcommand{\colim}{\mathrm{colim}}

\newcommand{\Ch}[1]{\mathsf{Ch}(#1)}
\newcommand{\stalk}[1]{S\mspace{-2mu}(#1)}
\newcommand{\disc}[1]{D(#1)}
\newcommand{\tHom}{\mathrm{Hom}_R^{\scriptscriptstyle\bullet}}
\newcommand{\tTen}[1][R]{\otimes_{#1}^{\scriptscriptstyle\bullet}}
\newcommand{\mHom}{\underline{\mathrm{Hom}}_{\mspace{2mu}R}^{\mspace{1mu}\scriptscriptstyle\bullet}}
\newcommand{\mTen}[1][R]{\underline{\otimes}_{\mspace{2mu}#1}^{\mspace{1mu}\scriptscriptstyle\bullet}}

\newcommand{\Mod}[1]{\mathsf{Mod}(#1)}
\newcommand{\Qcoh}[1]{\mathsf{Qcoh}(#1)}
\newcommand{\Coh}[1]{\mathsf{Coh}(#1)}
\newcommand{\shHomqce}{\mathscr{H}\mspace{-2.5mu}om^\mathrm{qc}}
\newcommand{\shHome}{\mathscr{H}\mspace{-2.5mu}om}


\begin{document}

\title{The tensor embedding for a Grothendieck cosmos}

\author{Henrik Holm \ }
\address[H.H.]{Department of Mathematical Sciences, University of Co\-penhagen, Universitetsparken 5, 2100 Copenhagen {\O}, Denmark} 
\email{holm@math.ku.dk}
\urladdr{http://www.math.ku.dk/\~{}holm/}

\author{ \ Sinem Odaba\c{s}{\i}}
\address[S.O.]{Instituto de Ciencias F\'isicas y Matem\'aticas, Universidad Austral de Chile, Valdivia-CHILE} 
\email{sinem.odabasi@uach.cl}
\thanks{S. Odaba\c{s}{\i} has been supported by the research grant CONICYT/FONDECYT/Iniciaci\'on/11170394.}



\keywords{Cosmos; enriched functor; exact category; Grothendick category; (pre)envelope; (pure) injective object; purity; symmetric monoidal category; tensor embedding; Yoneda embedding.}

\subjclass[2010]{18D15, 18D20, 18E10, 18E15, 18E20, 18G05}




\begin{abstract}
While the Yoneda embedding and its generalizations have been studied extensively in the literature, the so-called tensor embedding has only received little attention. In this paper, we study the tensor embedding for closed symmetric monoidal categories and show how it is connected to the notion of geometrically purity, which has recently~been investigated in works of Enochs, Estrada, Gillespie, and Odaba\c{s}{\i}. More precisely, for~a~Gro\-thendieck cosmos---that is, a bicomplete Grothendick category $\V$ with a closed symmetric monoidal structure---we prove that the geometrically pure exact category $(\V,\mathscr{E}_\otimes)$ has enough relative injectives; in fact, every object has a geometrically pure injective envelope. We also show that for some regular cardinal $\lambda$, the tensor embedding yields an exact equivalence between $(\V,\mathscr{E}_\otimes)$ and the category of $\lambda$-cocontinuous $\V$-functors from $\lPres{\V}$ to $\V$, where the former is the full $\V$-subcategory of $\lambda$-presentable objects in $\V$. In many cases of interest, $\lambda$ can be chosen to be $\aleph_0$ and the tensor embedding identifies the geometrically pure injective objects in $\V$ with the (categorically) injective objects in the abelian category of $\V$-functors from $\fp{\V}$ to $\V$. As we explain, the developed theory applies  e.g.~to the category $\Ch{R}$ of chain complexes of modules over a commutative ring $R$ and to the category $\Qcoh{X}$ of quasi-coherent sheaves over a (suitably nice) scheme $X$.
\end{abstract}

\maketitle

\section{Introduction}

By the Gabriel--Quillen Embedding Theorem, see \cite[Thm.~A.7.1]{TT90}, any small exact category admits an exact full embedding, which also reflects exactness, into some abelian category. Hence any small exact category is equivalent, as an exact category, to an extension-closed sub\-category of an abelian category. Actually, the same is true for many large exact categories of interest. Consider e.g.~the category $R\textnormal{-Mod}$ of left $R$-modules equipped with the \emph{pure exact structure}, $\mathscr{E}_\mathrm{pure}$, where the ``exact sequences'' (the conflations) are directed colimits of split exact sequences in $R\textnormal{-Mod}$. The exact category $(R\textnormal{-Mod},\mathscr{E}_\mathrm{pure})$ admits two different exact full embeddings into abelian categories. One is the Yoneda embedding,
\begin{equation*}
  \label{eq:Yoneda}
  (R\textnormal{-Mod},\mathscr{E}_\mathrm{pure}) \longrightarrow [(R\textnormal{-mod})^\mathrm{op},\Ab]_0
  \qquad \textnormal{given by} \qquad 
  M \longmapsto \Hom_R(-,M)|_{R\textnormal{-mod}}\;;
\end{equation*}
the other is the so-called tensor embedding,
\begin{equation}
  \label{eq:Tensor}
  (R\textnormal{-Mod},\mathscr{E}_\mathrm{pure}) \longrightarrow [\textnormal{mod-}R,\Ab]_0
  \qquad \textnormal{given by} \qquad 
  M \longmapsto (- \otimes_R M)|_{\textnormal{mod-}R}\;.
\end{equation}
Here ``$\textnormal{mod}$'' means finitely presentable modules and $[\mathcal{X},\Ab]_0$ denotes the category of additive functors from $\mathcal{X}$ to the category $\Ab$ of abelian groups. For a detailed discussion and proofs of these embeddings we refer to \cite[Thms. B.11 and B.16]{JL}.

We point out some important and interesting generalizations of the Yoneda embedding, mentioned above, that can be found in the literature and have motivated this work.
\begin{prt}

\smallskip

\item[($*$)] Any locally finitely presentable (= locally $\aleph_0$-presentable) abelian\footnote{\,The category need not be abelian; it suffiecs to assume that it is additive and idempotent complete.} category $\mathcal{C}$ can~be equipped with the \emph{categorically pure exact structure}, $\mathscr{E}_{\aleph_0}$, consisting of exact sequences $0 \to X \to Y \to Z \to 0$ in $\mathcal{C}$ for which the sequence
\begin{equation*}
  0 \longrightarrow \Hom_\mathcal{C}(C,X) \longrightarrow \Hom_\mathcal{C}(C,Y) \longrightarrow \Hom_\mathcal{C}(C,Z) \longrightarrow 0
\end{equation*}
is exact in $\Ab$ for every finitely presentable (= $\aleph_0$-presentable) object $C \in \mathcal{C}$. In this case, the Yoneda functor,
\begin{equation*}
  \qquad
  (\mathcal{C},\mathscr{E}_{\aleph_0}) \longrightarrow [\fp{\mathcal{C}}^\mathrm{op},\Ab]_0
  \qquad \textnormal{given by} \qquad 
  X \longmapsto \Hom_\mathcal{C}(-,X)|_{\fp{\mathcal{C}}}\;,
\end{equation*}
is an exact full embedding whose essential image is the subcategory of flat functors (= directed colimits of representable functors). Furthermore, the Yoneda embedding identifies the pure projective objects in $\mathcal{C}$ (= the objects in $\mathcal{C}$ that are projective relative to the exact structure $\mathscr{E}_{\aleph_0}$) with the projective objects in $[\fp{\mathcal{C}}^\mathrm{op},\Ab]_0$. These results can be found in Cravley-Boevey \cite[(1.4) and \S3]{CB}, but see also Lenzing \cite{Lenzing}. 

Note that for $\mathcal{C} = R\textnormal{-Mod}$ the categorically pure exact structure $\mathscr{E}_{\aleph_0}$ coincides with the pure exact structure $\mathscr{E}_\mathrm{pure}$ mentioned previously; see \cite[Thm.~6.4]{JL}. One advantage of the identifications provided by the Yoneda embedding is that $\mathcal{C}$ is equivalent to the category of flat unitary modules over a (non-unital) ring with enough idempotents. For further applications of this embedding see for example \cite{BCE19} and \cite{Sto14}.

More generally, if $\mathcal{C}$ is a locally $\lambda$-presentable abelian category, where $\lambda$ is a~regular cardinal, then it can be equipped with a \emph{categorically pure exact structure}, $\mathscr{E}_{\lambda}$, which is defined similarly to $\mathscr{E}_{\aleph_0}$ and treated in \cite{AR04} by Ad\'{a}mek and Rosick\'{y} (see also the discussion preceding \stpref{setup-abelian-cosmos-1}). Also in this case, the Yoneda functor 
\begin{equation*}
  \qquad 
  (\mathcal{C},\mathscr{E}_{\lambda}) \longrightarrow [\lPres{\mathcal{C}}^\mathrm{op},\Ab]_0
  \qquad \textnormal{given by} \qquad 
  X \longmapsto \Hom_\mathcal{C}(-,X)|_{\lPres{\mathcal{C}}}\;,
\end{equation*}
is an exact full embedding, where $\lPres{\mathcal{C}}$ is the category of $\lambda$-presentable objects. 

\smallskip

\item[($**$)] The Yoneda embeddding has also been studied in the context of enriched categories. Let $\V$ be a locally $\lambda$-presentable base and let $\mathcal{C}$ be a locally $\lambda$-presentable $\V$-category in the sense of Borceux, Quinteiro, and Rosick\'{y} \cite[Dfns.~1.1 and 6.1]{BQR98}. Denote by $\lPres{\mathcal{C}}$ the full $\V$-subcategory of $\lambda$-pre\-sent\-able objects in $\mathcal{C}$, in the enriched sense \cite[Dfn.~3.1]{BQR98}, and let $[\lPres{\mathcal{C}}^\mathrm{op},\V]$ be the $\V$-category of $\V$-functors from $\lPres{\mathcal{C}}^\mathrm{op}$ to $\V$. In \cite[(proof of) Thm.~6.3]{BQR98} it is shown that the Yoneda $\V$-functor
\begin{equation*}
  \upUpsilon \colon \mathcal{C} \longrightarrow [\lPres{\mathcal{C}}^\mathrm{op},\V]
  \qquad \textnormal{given by} \qquad 
  X \longmapsto \mathcal{C}(-,X)|_{\lPres{\mathcal{C}}}
\end{equation*}
is fully faithful with essential image:
\begin{equation*}
  \operatorname{Ess.Im}\upUpsilon \,=\, \lFlat{\lPres{\mathcal{C}}^\mathrm{op}}{\V} \,=\,
  \lCon{\lPres{\mathcal{C}}^\mathrm{op}}{\V}\;.
\end{equation*}
Here $\lFlat{\lPres{\mathcal{C}}^\mathrm{op}}{\V}$ is the $\V$-subcategory of $[\lPres{\mathcal{C}}^\mathrm{op},\V]$ consisting of $\lambda$-flat $\V$-functors, in the enriched sense, and $\lCon{\lPres{\mathcal{C}}^\mathrm{op}}{\V}$ is the $\V$-subcategory of $\lambda$-continuous $\V$-functors, that is, $\V$-functors that preserve $\lambda$-small $\V$-limits.

\smallskip

\end{prt}

In contrast to the Yoneda embedding, the tensor embedding \eqref{Tensor} and its possible generalizations have only received little attention in the literature. One reason for this is probably that any potential generalization\,/\,extension of \eqref{Tensor} within ordinary category theory seems impossible, as the definition itself requires the existence of a suitable tensor product. However, it is possible to make sense of the tensor embedding for a closed symmetric~mo\-noid\-al category, and this is exactly what we do in this paper. More precisely, we consider to begin with (in \secref{tensor-embedding}) an abelian cosmos $(\V,\otimes,I,[-,-])$ and the $\V$-functor
\begin{equation*}
\upTheta \colon \V \longrightarrow [\A,\V]
\qquad \text{given by} \qquad 
X \longmapsto (X\otimes-)|_{\A}\;,
\end{equation*}
where $\A$ is any full $\V$-subcategory of $\V$ containing the unit object $I$. We call $\upTheta$ the \emph{tensor embedding} and we show in \thmref{main} that it is, indeed, fully faithful, and thus it induces an equivalence of $\V$-categories $\V \simeq \operatorname{Ess.Im}\upTheta$. We also prove that $\upTheta$ preserves $\V$-colimits.

Certainly, $\upTheta$ induces an (ordinary) additive functor
\begin{equation}
\label{eq:ord-ten}
\upTheta_0 \colon \V_0 \longrightarrow [\A,\V]_0
\end{equation}
between the underlying abelian categories (the fact that $[\A,\V]_0$ is abelian is contained in \cite[Thm.~4.2]{AG16} by Al~Hwaeer and Garkusha). As we now explain, this functor is intimately connected with the notion of geometrically purity.

As $\V$ is closed symmetric monoidal, it can be equipped with the so-called \emph{geometrically pure exact structure}, $\mathscr{E}_{\otimes}$, in which the admissible monomorphisms are geometrically pure monomorphisms introduced by Fox \cite{Fox} (see \dfnref{geo-exact-structure}). The exact category $(\V_0,\mathscr{E}_{\otimes})$ has recently been studied 
in works of Enochs, Estrada, Gillespie, and Odaba\c{s}{\i} \cite{EEO16,EGO17}, and we continue to investigate it in this paper. Note that if  $\V$ happens to be locally $\lambda$-presentable (which will often be the case), then it also makes sense to consider the \emph{categorically pure exact structure}, $\mathscr{E}_{\lambda}$, from ($*$). As mentioned in \cite[Rem.~2.8]{EGO17}, one always has $\mathscr{E}_{\lambda} \subseteq \mathscr{E}_{\otimes}$, but in general these two exact structures are different! However, for $\V=\Mod{R}$ they agree by \cite[Thm.~6.4]{JL}. Although being different from the categorically pure exact structure, the geometrically pure exact structure, $\mathscr{E}_{\otimes}$, captures many interesting notions of purity, e.g.:
\begin{itemlist}

\item The category $\Ch{R}$ of chain complexes of $R$-modules ($R$ is any commutative ring) is closed symmetric monoidal when equipped with the total tensor product and total Hom. In this situation, a short exact sequence $0 \to C' \to C \to C'' \to 0$ is in $\mathscr{E}_{\otimes}$ if and only if it is degreewise pure exact, meaning that $0 \to C'_n \to C_n \to C''_n \to 0$ is a pure exact sequence of $R$-modules for every $n \in \mathbb{Z}$. See \exaref{Ch-1}\prtlbl{a}.

\item In the closed symmetric monoidal category $\Qcoh{X}$ of quasi-coherent sheaves on a quasi-seperated scheme $X$, a short exact sequence $0 \to F' \to F \to F'' \to 0$ is in $\mathscr{E}_{\otimes}$ if and only if   it is stalkwise pure exact, meaning that $0 \to F'_x \to F_x \to F''_x \to 0$ is a pure exact sequence of $\mathscr{O}_{X,x}$-modules for every $x \in X$. See \exaref{QcohX-1}.
\end{itemlist}

In \secref{purity} we study purity. A main result about the geometrically pure exact category, which we prove in 
\prpref{dual2} and \thmref{pureinjectiveenvelope}, is the following.

\begin{res*}[Theorem~A]
  The exact category $(\V_0,\mathscr{E}_{\otimes})$ has enough relative injectives. In the language of relative homological algebra, this means that every object in $\V_0$ has a geometrically pure injective \emph{\emph{preenvelope}}. If\, $\V_0$ is Grothendieck, then every object in $\V_0$ even has a geometri\-cally pure injective \emph{\emph{envelope}}.
\end{res*}

In \dfnref{star-exact-structure}\,/\,\prpref{prp-star} we introduce a certain exact structure on the abelian category $[\A,\V]_0$. We call it the \emph{$\star$-pure exact structure}, $\mathscr{E}_{\star}$, and it is usually strictly coarser than the exact structure induced by the abelian structure on $[\A,\V]_0$. As already hinted, there is a connection between the tensor embedding functor $\upTheta_0$ from \textnormal{\eqref{ord-ten}} and the geometrically pure exact structure on $\mathcal{V}_0$. The main result in \secref{tensor-embedding} is \thmref{main}, which contains:
 
\begin{res*}[Theorem~B]
  The tensor embedding yields a fully faithful exact functor,
  \begin{equation*}
    \upTheta_0 \colon (\V_0,\mathscr{E}_{\otimes}) \longrightarrow ([\A,\V]_0,\mathscr{E}_{\star})\;,
  \end{equation*}
  which induces an equivalence of exact categories $(\V_0,\mathscr{E}_{\otimes}) \simeq (\operatorname{Ess.Im}\upTheta,\mathscr{E}_\star|_{\operatorname{Ess.Im}\upTheta})$.
\end{res*}

So far (i.e.~in \secref[Sections~]{purity} and \secref[]{tensor-embedding}) $\V$ has been an abelian cosmos and $\A$ has been \textsl{any} full $\V$-subcategory of $\V$ containing the unit object $I$. In \secref{Grothendieck-cosmos} we assume that $\V$ is~a~Gro\-then\-dieck cosmos. We show in \prpref{Grothendieck-lpb} that there exists some regular cardinal $\lambda$ for which 
$\V$ is a locally $\lambda$-presentable base, and we focus now only on the case where
\begin{equation*}
  \A \,=\, \lPres{\V}
\end{equation*}
is the the $\V$-subcategory of $\lambda$-presentable objects in $\V$ (in the ordinary categorical sense, or in the enriched sense; it makes no difference by \cite[Cor.~3.3]{BQR98}). In this situation, we explicitly describe the essential image of $\upTheta$. The description is, in some sense, dual to~the one for the Yoneda embedding in ($**$) above. We also show that in this case the $\star$-pure exact structure, $\mathscr{E}_{\star}$, and the abelian exact structure from $[\lPres{\V},\V]_0$ agree on $\operatorname{Ess.Im}\upTheta$, and that simplifies the last statement in Theorem~B. The precise statements are given below; they are contained in \thmref{main2}, which is the main result of \secref{Grothendieck-cosmos}.

\begin{res*}[Theorem~C]
  The essential image of the fully faithful tensor embedding
\begin{equation*}
\upTheta \colon \V \longrightarrow [\lPres{\V},\V]
\qquad \text{given by} \qquad 
X \longmapsto (X\otimes-)|_{\lPres{\V}}\;
\end{equation*}  
is precisely $\operatorname{Ess.Im}\upTheta = \lCocon{\lPres{\V}}{\V}$, that is, the subcategory of $\lambda$-cocontinuous $\V$-functors from $\lPres{\V}$ to $\V$. Further, $\upTheta_0$ induces an equivalence of exact categories,
\begin{equation*}
  (\V_0,\mathscr{E}_{\otimes}) \,\simeq\, \lCocon{\lPres{\V}}{\V}\;;
\end{equation*} 
the exact structure on the right-hand side is induced by the abelian structure on $[\lPres{\V},\V]_0$.
\end{res*}

In the final \secref{dualizable} we specialize the setup even further. Here we require $\V$ to be a Grothendieck cosmos (as in \secref{Grothendieck-cosmos}) which is generated by  a set of \emph{dualizable} objects and where the unit object $I$ is finitely presentable. The category $\Ch{R}$ of chain complexes always satisfies these requirements, and so does the category $\Qcoh{X}$ of quasi-coherent sheaves for most schemes $X$ (see \exaref[Examples~]{Ch-new} and \exaref[]{QcohX-new}). We prove in \prpref{aleph0-base} that such a category $\V$ is a locally finitely presentable base, which means that we can apply our previous results with $\lambda = \aleph_0$. In this case, 
\begin{equation*}
  \lPres[\aleph_0]{\V} \,=:\, \fp{\V}
\end{equation*}
is the class of finitely presentable objects in $\V$. The main result in the last section is~\thmref{main3}, of which the following is a special case:

\begin{res*}[Theorem~D]
  For $\V$ as described above, the tensor embedding from Theorem~C with~\mbox{$\lambda = \aleph_0$} restricts to an equivalence between the geometrically pure injective objects in $\V_0$ and the (categorically) injective objects in $[\fp{\V},\V]_0$. In symbols:
\begin{equation*}
\PureInj{\V_0} \,\simeq\, \Inj{[\fp{\V},\V]_0}\;.
\end{equation*}  
\end{res*}

\medskip

  This work has been developed in the setting of a suitably nice abelian cosmos $\V$. Unfortunately, this setting excludes applications to the ``non-commutative realm'', in particular, it does not cover the original tensor embedding \eqref{Tensor}. However, it is possible to develop much of the theory, not just for the category $\V$, but for the category $R\textnormal{-Mod}$ of \emph{$R$-left-objects} (or \emph{left $R$-modules}) in the sense of Pareigis \cite{Par77}, where $R$ is any \emph{monoid} (or \emph{ring object})~in~$\V$. Note that $\V$ is a special case of $R\textnormal{-Mod}$ as the unit object $I$ is a commutative monoid in $\V$ with $I\textnormal{-Mod} = \V$. To develop the theory found in this paper for $R\textnormal{-Mod}$ instead of just $\V$, one basically uses the same proofs, but things become more technical. A reader who wants to carry out this program should be able to do so with the information given in \mbox{\rmkref{monoid}}.

\section{Preliminaries}

We recall some definitions and terminology from enriched category theory that are important in this paper. We also a give some examples, which we shall repeatedly return~to.

\begin{bfhpg}[Locally presentable categories (\cite{AR})] 
\label{present}
Let  $\lambda$ be a regular cardinal. An object $A$ in a category $\V$ is said to be \emph{$\lambda$-presentable} if the functor $\V_0(A,-) \colon \V \to \Set$
preserves \mbox{$\lambda$-directed} colimits. One says that $\V$ is \emph{locally $\lambda$-presentable} if it is cocomplete and there is a
\textsl{set} $\mathcal S$ of $\lambda$-presentable objects such that every object in $\V$ is a $\lambda$-directed colimit of objects in $\mathcal{S}$.

It is customary to say \emph{finitely presentable} instead of ``$\aleph_0$-pre\-sent\-able''; thus $\aleph_0$-pre\-sent\-able objects are called \emph{finitely presentable objects} and locally $\aleph_0$-presentable categories are called \emph{locally finitely presentable categories}. Moreover, ``$\aleph_0$-directed colimits'' are~simply called \emph{directed colimits}.
\end{bfhpg}

\begin{bfhpg}[Monoidal categories (\cite{Kelly})] 
\label{monoidal-category} 
A \textit{monoidal category} consists of a category $\V$, a bifunctor $\otimes \colon \V \times \V \rightarrow \V$ (tensor product), a unit object $I \in \V$, and natural isomorphisms $a$ (associator), $l$ (left unitor), and $r$ (right unitor) subject to the coherence axioms found in \cite[\S1.1 eq.~(1.1) and (1.2)]{Kelly}. A monoidal category $\V$ is said to be
\emph{symmetric} if there is a natural isomorphism $c$ (symmetry) subject to further coherence axioms that express the compatibility of $c$ with $a$, $l$, and $r$; see \cite[\S1.4 eq.~(1.14)--(1.16)]{Kelly}. In particular, the symmetry $c$ identifies $l$ and $r$ so there is no need to distinguish between them. Due to Mac Lane's \textit{coherence theorem}, see \cite{MacLane63} or \cite[Sect.~VII.2]{MacLane}, it is customary to suppress $a$, $l$, $r$, and $c$, and we simply write $(\V,\otimes,I)$ when referring to a (symmetric) monoidal category.
A symmetric monoidal category is said to be \emph{closed} if for every $X \in \V$, the functor $-\otimes X \colon \V \rightarrow \V$ has a right adjoint $[X,-] \colon \V \rightarrow \V$; see
\cite[\S1.5]{Kelly}. It turns out that $[-,-]$ is a bifunctor $\V^{\mathrm{op}} \times \V \rightarrow \V$, and we write a closed symmetric monoidal category as a quadruple $(\V,\otimes,I,[-,-])$.
\end{bfhpg}

\begin{exa}
  \label{exa:Ch}
  The category $\Ch{R}$ of chain complexes of modules over a commutative ring $R$ is a Grothendieck category with two different closed symmetric monoidal structures:
  \begin{prt}
  \item $(\Ch{R},\tTen,\stalk{R},\tHom)$ where $\tTen$ is the \emph{total tensor product}, $\tHom$ the \emph{total Hom}, and $\stalk{R} = 0 \to R \to 0$ is the \emph{stalk} complex with $R$ in degree $0$. See \cite[App.~A.2]{Chr}.
  \item $(\Ch{R},\mTen,\disc{R},\mHom)$ where $\mTen$ is the \emph{modified} total tensor product, $\mHom$ the \emph{modified} total Hom, and $\disc{R} = 0 \to R \to R \to 0$ is the \emph{disc} complex concentrated in homological degrees $0$ and $-1$. See \cite[\S2]{EG97} or \cite[\S4.2]{MR1693036}.
  \end{prt}
\end{exa}

\begin{exa}
  \label{exa:QcohX}
  Some important examples of closed symmetric monoidal categories, which are also Grothendieck, come from algebraic geometry. Let $X$ be any scheme.
  \begin{prt}
  \item $(\Mod{X},\otimes_{X},\mathscr{O}_X,\shHome_X)$ is a closed symmetric mo\-noi\-dal category, where $\Mod{X}$ is the abelian category of all sheaves (of $\mathscr{O}_X$-mod\-ules) on $X$; see \cite[~II\S5]{Hartshorne}. It is well-known that this is a Grothendieck category; see \cite[Prop.~3.1.1]{AGr57}.

  \item The category $\Qcoh{X}$ of quasi-coherent sheaves on $X$ is an abelian subcategory of $\Mod{X}$; see \cite[Cor.~(2.2.2)(i,ii)]{MR3075000} or \cite[II Prop.~5.7]{Hartshorne}. As $I = \mathscr{O}_X$ is quasi-coherent and quasi-coherent sheaves are closed under tensor products by \cite[Cor.~(2.2.2)(v)]{MR3075000}, it follows that $(\Qcoh{X},\otimes_{X},\mathscr{O}_X)$ is a monoidal subcategory of $(\Mod{X},\otimes_{X},\mathscr{O}_X)$. In ge\-ne\-ral, $\shHome_X$ is not an internal hom in $\Qcoh{X}$. However, the inclusion functor \mbox{$\Qcoh{X} \to \Mod{X}$} admits a right adjoint $Q_X \colon \Mod{X} \to \Qcoh{X}$, called~the \emph{coherator}, and the counit $Q_X(\mathscr{F}) \to \mathscr{F}$ is an isomorphism for every quasi-coherent sheaf $\mathscr{F}$; see \cite[Tag~08D6]{stacks-project}. It is well-known, and completely formal, that the functor \smash{$\shHomqce_X := Q_X\shHome_X$} yields a closed structure on $(\Qcoh{X},\otimes_{X},\mathscr{O}_X)$. The category $\Qcoh{X}$ is Grothendieck by \cite[Lem.~1.3]{AJPV08}.
  \end{prt}
\end{exa}

A category can be locally presentable (as in \ref{present}) and closed symmetric monoidal (as in \ref{monoidal-category}) at the same time, but in generel one can not expect any compatibility between the two structures. This is the reason for the next definition, which comes from \cite{BQR98}.

\begin{bfhpg}[Locally presentable bases {(\cite[Dfn.~1.1]{BQR98})}] \label{lpb}
Let $\lambda$ be a regular cardinal.  A closed symmetric monoidal category $(\V,\otimes,I,[-,-])$ is said to be a \emph{locally $\lambda$-presentable base} if it satisfies the following condition:
\begin{rqm}
\item the category $\V$ is locally $\lambda$-presentable,
\item the unit object $I$ is $\lambda$-presentable, and
\item the class of $\lambda$-presentable objects is closed under the tensor product $\otimes$.
\end{rqm}
A locally $\aleph_0$-presentable base is simply called a \emph{locally  finitely presentable base}.
\end{bfhpg}

\begin{bfhpg}[Enriched category theory] 
  \label{ect}
  We assume familiarity with basic notions and results from enriched cateory theory as presented in \cite[Chaps.~1--2 (and parts of 3)]{Kelly}. In particular, for a closed symmetric monoidal category $(\V,\otimes,I,[-,-])$, the definitions and properties of \emph{$\V$-categories} and their \emph{underlying ordinary categories}, \emph{$\V$-functors}, \emph{$\V$-natural transformations}, and \emph{weighted limits and colimits} will be important. When we use specific results from enriched category theory, we will give appropriate references to \cite{Kelly}, but a~few general points are mentioned below.
  
  To avoid confusion, we often write $\V_0$ when we think of $\V$ as an ordinary category, and we use the symbol $\V$ when it is viewed as a $\V$-category.

  If the category $\V_0$ is complete, $\mathcal{K}$ is a small $\V$-category, and $\mathcal{C}$ is any $\V$-category, there is a $\V$-category $[\mathcal{K},\mathcal{C}]$ whose objects are $\V$-functors $\mathcal{K} \to \mathcal{C}$. The underlying ordinary category $[\mathcal{K},\mathcal{C}]_0$ has $\V$-natural transformations as morphisms. See \cite[\S2.1--2.2]{Kelly}.
  
  In the proof of \prpref{essim} we use the \emph{unit $\V$-category $\mathcal{I}$}. It has one object, $*$, and $\mathcal{I}(*,*)=I$. The composition law is given by the isomorphism $I\otimes I \rightarrow I$. See \cite[\S1.3]{Kelly}.
\end{bfhpg}

In \dfnref[]{lambda-small-functor}--\ref{tensored-cotensored} below, $(\V,\otimes,I,[-,-])$ denotes a \emph{cosmos}, that is, a closed symmetric monoidal category for which $\V_0$ is bicomplete. The examples in \exaref[]{Ch} and \exaref[]{QcohX} are all cosmos.

The next notion of smallness for a $\V$-functor with values in $\V$ will be important to us.

\begin{dfn}(\cite[Dfn.~2.1]{BQR98}). \label{dfn:lambda-small-functor} Let $\lambda$ be a regular cardinal. A $\V$-functor $T \colon \mathcal{K} \rightarrow  \V$ is said to be \textit{$\lambda$-small} if the following conditions are satisfied:
\begin{rqm}
\item the class  $\operatorname{Ob}\mathcal{K}$ is a set of cardinality strictly less than $\lambda$,
\item for all objects $X,Y \in \mathcal{K}$, the hom-object $\mathcal{K}(X,Y)$ is $\lambda$-presentable in $\V_0$, and 
\item for every object $X \in \mathcal{K}$, the object $T(X)$ is $\lambda$-presentable in $\V_0$.
\end{rqm}
\end{dfn}	

We shall also need the ``enriched versions'' of limits and colimits:

\begin{bfhpg}[Weighted limits and colimits \mbox{(\cite[Chap.~3]{Kelly})}] \label{weighted} Let $F \colon \mathcal{K} \rightarrow \V$ and $G \colon \mathcal{K} \rightarrow \A$ be $\V$-functors. The \textit{$\V$-limit of $G$ weighted by $F$}, if it exists, is an object $\{F,G\} \in \A$ for which there is a $\V$-natural isomorphism in $A \in \A$:
\begin{equation*}
\label{eq:eq-wlim}
\A(A,\{F,G\}) \,\cong\, [\mathcal{K},\V](F,\A(A,G(-)))\;.
\end{equation*}

Given $\V$-functors $G \colon \mathcal{K}^{\mathrm{op}} \rightarrow \V$ and $F \colon \mathcal{K} \rightarrow \A$, the \textit{$\V$-colimit of $F$ weighted by $G$}, if it exists, is an object $G \star F \in \A$ for which there is a $\V$-natural isomorphism in $A \in \A$: 
\begin{equation}
\label{eq:eq-wcolim}
\A(G \star F, A) \,\cong\, [\mathcal{K}^{\mathrm{op}},\V](G, \A(F(-),A))\;.
\end{equation}
\end{bfhpg}

\begin{bfhpg}[Tensors and cotensors \mbox{(\cite[Prop.~6.5.7]{B2})}] \label{tensored-cotensored} Let $\mathcal{K}$ be a small $\V$-category. The $\V$-category $[\mathcal{K},\V]$ is both \emph{tensored} and \emph{cotensored}. By \cite[Dfn. 6.5.1]{B2} this means that for every $V \in \V$ and $F \in [\mathcal{K},\V]$ there exist objects $V \otimes F$ and $[V,F]$ in $[\mathcal{K},\V]$ and $\V$-natural isomorphisms,
\begin{equation}
  \label{eq:eq-tensored-cotensored}
  [\mathcal{K},\V](V \otimes F,?) \,\cong\, [V, [\mathcal{K},\V](F,?)]
  \quad \textnormal{ and } \quad
  [\mathcal{K},\V](?,[V,F]) \,\cong\, [V, [\mathcal{K},\V](?,F)]\;.
\end{equation}
The proof of \cite[Prop.~6.5.7]{B2} reveals that the $\V$-functors $V \otimes F$ and $[V,F]$ are just the compositions $V \otimes F = (V \otimes -) \circ F$ and $[V,F] = [V,-] \circ F$.
\end{bfhpg}

\section{Exact categories and purity}
\label{sec:purity}

We demonstrate (\prpref{construction-of-exact-structure}) a general procedure to construct exact structures on an abelian category, and apply it to establish the so-called geometrically pure exact structure on $\V_0$ (\dfnref{geo-exact-structure}) and the $\star$-pure exact structure on $[\mathcal{K},\V]_0$ (\dfnref{star-exact-structure}).

\begin{bfhpg}[Exact categories (\cite{Quillen})] \label{exact-cat} Let $\mathcal{X}$ be an additive category and $\mathscr{E}$ be a class of  kernel-cokernel pairs $(i,p)$ in $\mathcal{X}$,
\begin{equation*}
\xymatrix{X \ar@{>->}[r]^-{i} & Y \ar@{->>}[r]^-{p} & Z},
\end{equation*}
that is,  $i$ is the kernel of $p$, and $p$ is the cokernel of $i$. The morphism $i$ is called an \emph{admissible monic} and $p$ an \emph{admissible epic} in $\mathscr{E}$.  The class $\mathscr{E}$ is said to form an \textit{exact structure} on $\mathcal{X}$ if it is closed under isomorphisms and satisfies the following axioms:
\begin{rqm}
	\item[(E0)] For every object $X$ in  $\mathcal{X}$, the identity morphism $\mathrm{id}_X$ is both an admissible monic and an admissible epic in $\mathscr{E}$.
	\item[(E1)] The classes of admissible monics and admissible epics in $\mathscr{E}$ are closed under compositions.
	
	\item[(E2)] The pushout (resp., pullback) of an admissible monic (resp., admissible epic) along an arbitrary morphism exists and yields an admissible monic (resp., admissible epic). 
\end{rqm}
In this situation, the pair $(\mathcal{X}, \mathscr{E})$ is called an \textit{exact category}. An object in $J \in \mathcal{X}$ is said to be \emph{injective relative to $\mathscr{E}$} if the functor $\Hom_\mathcal{X}(-,J)$ maps sequences in $\mathscr{E}$ to short exact sequences in $\Ab$.
For a detailed treatment on the subject, see \cite{Buhler}.
\end{bfhpg}

We begin with a general result, potentially of independent interest, which shows how to construct an exact structure $\mathscr{E}_{\mathfrak{T}}$ on an abelian category $\mathcal{C}$ from a collection $\mathfrak{T}$ of functors. Inspired by terminology from topology, we call $\mathscr{E}_{\mathfrak{T}}$ the \emph{initial exact structure on $\mathcal{C}$ w.r.t. $\mathfrak{T}$}. 

\begin{prp}
\label{prp:construction-of-exact-structure} 
Let $\mathcal{C}$ be an abelian category and $\mathfrak{T}$ a collection of additive functors \mbox{$T \colon \mathcal{C} \to \mathcal{D}_T$} where each category $\mathcal{D}_T$ is abelian and each functor $T$ is left exact or right exact. Denote by $\mathscr{E}_{\mathfrak{T}}$  the class of  all short exact sequences $0 \to X \to Y \to Z \to 0$ in $\mathcal{C}$~such~that 
\begin{equation*}
0 \longrightarrow TX \longrightarrow TY \longrightarrow TZ \longrightarrow 0
\end{equation*} 
is exact in $\mathcal{D}_T$ for every $T$ in $\mathfrak{T}$. Then $\mathscr{E}_{\mathfrak{T}}$ is an exact structure on $\mathcal{C}$, in fact, it is the finest (that is, the largest w.r.t.~inclusion) exact structure $\mathscr{E}$ on $\mathcal{C}$ which satisfies the condition that $T \colon (\mathcal{C},\mathscr{E}) \to \mathcal{D}_T$ is an exact functor for every $T$ in $\mathfrak{T}$. 
\end{prp}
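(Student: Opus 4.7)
My plan is to verify the exact-structure axioms (E0)--(E2) for the class $\mathscr{E}_{\mathfrak{T}}$ directly, and then deduce the maximality assertion from the very definition of $\mathscr{E}_{\mathfrak{T}}$. Closure under isomorphism is immediate, and (E0) follows from additivity of every $T \in \mathfrak{T}$: the trivial conflations $0 \to X \xrightarrow{\mathrm{id}} X \to 0 \to 0$ and $0 \to 0 \to X \xrightarrow{\mathrm{id}} X \to 0$ are split and so preserved by any additive functor.

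The heart of the argument is (E1); I would treat the composition of admissible monics, the epic case being entirely dual. Given admissible monics $i \colon X \to Y$ and $j \colon Y \to Z$ in $\mathscr{E}_{\mathfrak{T}}$, form the classical $3 \times 3$ diagram in $\mathcal{C}$ whose first row is the kernel-cokernel pair of $i$, second row that of $ji$, second column that of $j$, and third column the sequence $0 \to \operatorname{Cok}(i) \to \operatorname{Cok}(ji) \to \operatorname{Cok}(j) \to 0$, which is short exact by the snake lemma. Fix $T \in \mathfrak{T}$ and apply it. If $T$ is right exact, then $T(ji) = T(j)T(i)$ is the composite of two monomorphisms (by hypothesis on the first row and second column) and hence monic, and right exactness supplies the rest of the required short exact sequence in $\mathcal{D}_T$. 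If $T$ is left exact, I would first upgrade the third column to a short exact sequence in $\mathcal{D}_T$: left exactness alone gives $0 \to T\operatorname{Cok}(i) \to T\operatorname{Cok}(ji) \to T\operatorname{Cok}(j)$ exact, and the last map is epic because the composite $TZ \to T\operatorname{Cok}(ji) \to T\operatorname{Cok}(j)$ agrees with the epimorphism $TZ \to T\operatorname{Cok}(j)$ from the second column. With all three columns and the first and third rows exact in the abelian category $\mathcal{D}_T$, the $3 \times 3$ lemma forces the second row to be short exact, giving the desired membership in $\mathscr{E}_{\mathfrak{T}}$.

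For (E2), consider the pushout of an admissible monic $i \colon X \to Y$ along an arbitrary $f \colon X \to X'$, with resulting $i' \colon X' \to Y'$ and $g \colon Y \to Y'$. Standard abelian-category facts give that $i'$ is monic with $\operatorname{Cok}(i') \cong \operatorname{Cok}(i)$. If $T \in \mathfrak{T}$ is right exact, it preserves finite direct sums and cokernels, hence pushouts, so the $T$-image is a pushout along the monic $T(i)$ and is a conflation in $\mathcal{D}_T$. If $T$ is left exact, applying $T$ to the short exact sequence $0 \to X' \xrightarrow{i'} Y' \to \operatorname{Cok}(i') \to 0$ yields $0 \to TX' \to TY' \to T\operatorname{Cok}(i')$ exact, and the last map is epic because the composite $TY \xrightarrow{Tg} TY' \to T\operatorname{Cok}(i') \cong T\operatorname{Cok}(i)$ agrees with the already-epic map $TY \to T\operatorname{Cok}(i)$ arising from the $\mathscr{E}_{\mathfrak{T}}$-sequence of $i$. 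The pullback case is entirely dual.

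Finally, maximality is immediate: if $\mathscr{E}$ is any exact structure on $\mathcal{C}$ making each $T \colon (\mathcal{C},\mathscr{E}) \to \mathcal{D}_T$ exact, then every conflation in $\mathscr{E}$ is sent by every $T$ to a short exact sequence in $\mathcal{D}_T$, which says exactly that $\mathscr{E} \subseteq \mathscr{E}_{\mathfrak{T}}$; and $\mathscr{E}_{\mathfrak{T}}$ satisfies this property by construction. The step I anticipate as the main obstacle is the $3 \times 3$ argument in the left-exact case of (E1), where one must first upgrade the third column from left exact to short exact before the nine lemma can be invoked.
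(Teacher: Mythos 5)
Your proposal is correct and follows essentially the same route as the paper's proof: the same $3\times 3$ diagram for (E1) with the same factorization trick to upgrade the cokernel column to a short exact sequence in the left-exact case, and the same pushout/cokernel-isomorphism argument for (E2). The only cosmetic difference is that you finish the left-exact case of (E1) by citing the nine lemma (with exact columns and outer rows, the middle row being a complex), whereas the paper phrases the identical step as a short exact sequence of column-complexes two of whose terms are exact.
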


\begin{proof}
Once we have proved that $\mathscr{E}_{\mathfrak{T}}$ is, in fact, an exact structure on $\mathcal{C}$, then certainly $T \colon (\mathcal{C},\mathscr{E}_{\mathfrak{T}}) \to \mathcal{D}_T$ is an exact functor for every $T$ in $\mathfrak{T}$. Moreover, if $\mathscr{E}$ is any exact structure on $\mathcal{C}$ for which every $T$ in $\mathfrak{T}$ is an exact functor $T \colon (\mathcal{C},\mathscr{E}) \to \mathcal{D}_T$, then evidently $\mathscr{E} \subseteq \mathscr{E}_{\mathfrak{T}}$.

We now show that $\mathscr{E}_{\mathfrak{T}}$ satisfies the axioms in \ref{exact-cat}. The condition (E0) is immediate from the definition of $\mathscr{E}_{\mathfrak{T}}$. To show (E1), let $f \colon X \to Y$ and $g \colon Y \to Z$ be composable morphisms in $\mathcal{C}$. We will prove that if $f$ and $g$ are admissible monics in $\mathscr{E}_{\mathfrak{T}}$, then so is $gf$. The case where $f$ and $g$ are admissible epics in $\mathscr{E}_{\mathfrak{T}}$ is proved similarly. If $f$ and $g$ are admissible monics in $\mathscr{E}_{\mathfrak{T}}$ then, by definition, $f$ and $g$ are monics in $\mathcal{C}$ and the short exact sequences
\begin{equation*}
  \xymatrix@C=1.5pc{
    0 \ar[r] & X \ar[r]^-{f} & Y \ar[r] & \Coker{f} \ar[r] & 0
  }
  \qquad \textnormal{and} \qquad
  \xymatrix@C=1.5pc{
    0 \ar[r] & Y \ar[r]^-{g} & Z \ar[r] & \Coker{g} \ar[r] & 0
  }
\end{equation*}
stay exact under every functor $T$ in $\mathfrak{T}$. The composition $gf$ is certainly a monic in $\mathcal{C}$, so it remains to prove that the short exact sequence 
\begin{equation}
  \label{eq:eq-gf}
  \xymatrix@C=1.5pc{
    0 \ar[r] & X \ar[r]^-{gf} & Z \ar[r] & \Coker{(gf)} \ar[r] & 0
  }
\end{equation}
stays exact under every functor $T$ in $\mathfrak{T}$. Let $T$ in $\mathfrak{T}$ be given and recall that $T$ is assumed to be left exact or right exact. As the composition of two monics, $T(gf) = T(g)T(f)$ is a monic. Thus, if $T$ is right exact, the sequence \eqref{eq-gf} certainly stays exact under $T$. Assume that $T$ is left exact. In the leftmost commutative diagram below, the lower row is exact by the Snake Lemma; the remaining rows and all columns are trivially exact. The rightmost commutative diagram is obtained by applying the functor $T$ to the leftmost one. In the right diagram, the $1^\mathrm{st}$ column and $2^\mathrm{nd}$ row are exact by assumption, and the $1^\mathrm{st}$ row and $3^\mathrm{rd}$ column are trivially exact. The epimorphism $T(Z) \twoheadrightarrow T(\Coker{g})$ in the $2^\mathrm{nd}$ row factorizes as $T(Z)\to T(\Coker{(gf)}) \to T(\Coker{g})$, and hence the last morphism $T(\Coker{(gf)}) \to T(\Coker{g})$ in the $3^\mathrm{rd}$ row is epic too. Since $T$ is left exact, the entire $3^\mathrm{rd}$ row is exact. Consequently, in the rightmost diagram below, all three rows and the $1^\mathrm{st}$ and $3^\mathrm{rd}$ columns are exact.
\begin{equation*}
  \begin{gathered}
  \xymatrix@R=1.3pc@C=0.55pc{
    {} & 0 \ar[d] & 0 \ar[d] & 0 \ar[d] & {}
    \\
    0 \ar[r] &
    X \ar[r]^-{=} \ar[d]_-{f} &
    X \ar[r] \ar[d]^-{gf} &
    0 \ar[d] \ar[r] &
    0
    \\
    0 \ar[r] &
    Y \ar[r]^-{g} \ar[d] &
    Z \ar[r] \ar[d] &
    \Coker{g} \ar[d]^-{=} \ar[r] &
    0     
    \\
    0 \ar[r] &
    \Coker{f} \ar[r] \ar[d] &
    \Coker{(gf)} \ar[r] \ar[d] &
    \Coker{g} \ar[r] \ar[d] &
    0
    \\
    {} & 0 & 0 & 0 & {}    
  }
  \end{gathered}
  \qquad
  \begin{gathered}
  \xymatrix@R=1.3pc@C=0.55pc{
    {} & 0 \ar[d] & 0 \ar[d] & 0 \ar[d] & {}
    \\
    0 \ar[r] &
    T(X) \ar[r]^-{=} \ar[d]_-{T(f)} &
    T(X) \ar[r] \ar[d]^-{T(gf)} &
    0 \ar[d] \ar[r] &
    0
    \\
    0 \ar[r] &
    T(Y) \ar[r]^-{T(g)} \ar[d] &
    T(Z) \ar[r] \ar[d] &
    T(\Coker{g}) \ar[d]^-{=} \ar[r] &
    0     
    \\
    0 \ar[r] &
    T(\Coker{f}) \ar[r] \ar[d] &
    T(\Coker{(gf)}) \ar[r] \ar[d] &
    T(\Coker{g}) \ar[r] \ar[d] &
    0
    \\
    {} & 0 & 0 & 0 & {}    
  }  
  \end{gathered}
\end{equation*}
Thus, we can consider the rightmost diagram as an exact sequence $0 \to C_1 \to C_2 \to C_3 \to 0$ of complexes where $C_i$ is the $i^\mathrm{th}$ column in the diagram. As $C_1$ and $C_3$ are exact, so is $C_2$. Hence the sequence \eqref{eq-gf} stays exact under the functor $T$, as desired.

It remains to show (E2). We will show that 
the pushout of an admissible monic in $\mathscr{E}_{\mathfrak{T}}$ along an arbitrary morphism yields an admissible monic. A similar argument shows that the pullback of an admissible epic in $\mathscr{E}_{\mathfrak{T}}$ along an arbitrary morphism is an admissible epic. Thus, consider a pushout diagram in $\mathcal{C}$,
\begin{equation*}
\xymatrix{
  X
  \ar@{}[dr]|-{\mathrm{\scriptscriptstyle(pushout)}} \ar[d] \ar[r]^-{f} 
  & 
  Y \ar@{.>}[d]
  \\
  X' 
  \ar@{.>}[r]^-{f'} 
  & 
  Y'\;,\mspace{-8mu}
}
\end{equation*}
where $f$ is an admissible monic in $\mathscr{E}_{\mathfrak{T}}$ and $X \to X'$ is any morphism. As $f$ is, in particular, a mono\-morphism, so is $f'$ by \cite[Thm.~2.54*]{Freyd}, and hence there is a short exact sequence
\begin{equation*}
  \xymatrix@C=1.5pc{
    0 \ar[r] & X' \ar[r]^-{f'} & Y' \ar[r] & \Coker{f'} \ar[r] & 0
  }.
\end{equation*}
We must argue that this sequence stays exact under every $T$ in $\mathfrak{T}$. 

First assume that $T$ is right exact. In this case, $T(X') \to T(Y') \to T(\Coker{f'}) \to 0$ is exact, and it remains to see that $T(f')$ is monic. As $T$ preserves pushouts, $T(f')$ is a pushout of the monic $T(f)$, so another application of \cite[Thm.~2.54*]{Freyd} yields that $T(f')$~is~monic. 

Next assume that $T$ is left exact. In this case, $0 \to T(X') \to T(Y') \to T(\Coker{f'})$ is exact, and it remains to see that $T(Y') \to T(\Coker{f'})$ is epic. As $f'$ is a pushout of $f$, the canonical morphism $\Coker{f} \to \Coker{f'}$ is an isomorphism, cf. (the dual of) \cite[Thm.~2.52]{Freyd}, and hence so is $T(\Coker{f}) \to T(\Coker{f'})$. By assumption, $T(Y) \to T(\Coker{f})$ is epic, so the composite morphism \smash{$T(Y) \twoheadrightarrow T(\Coker{f}) \stackrel{\smash{\text{\raisebox{-2pt}{\smash{$\cong$}}}}}{\longrightarrow} T(\Coker{f'})$} is epic. But this composite is the same as the composite $T(Y) \to T(Y') \to T(\Coker{f'})$, which is therefore an epimorphism, and it follows that $T(Y') \to T(\Coker{f'})$ is an epimorphism.
\end{proof}

Any locally $\lambda$-presentable abelian category $\V$ (see \ref{present}) can be equipped with an exact structure (see \ref{exact-cat}) called the \emph{categorically pure exact structure} and denoted by $\mathscr{E}_{\lambda}$. In this exact structure, the admissible monomorphisms are precisely the \emph{$\lambda$-pure subobjects} and the admissible epimorphisms are precisely the \emph{$\lambda$-pure quotients} in the sense of \cite{AR04}. That these classes of morphisms do, in fact, yield an exact structure follows from Prop.~5, Obs.~11, and Prop.~15 in \emph{loc.~cit.}. Alternatively, it follows directly from  \prpref{construction-of-exact-structure} with 
$\mathcal{C}=\V$ and $\mathfrak{T}$ the collection of functors
$\V(A,-) \colon \V \to \Ab$ where $A$ ranges over the $\lambda$-presentable objects in $\V$. In the special case $\lambda = \aleph_0$, this kind of purity was studied in \cite[\S3]{CB}.

If $\V$ is a closed symmetric monoidal abelian category, there is also a notion of purity in $\V_0$ based on the tensor product (see \dfnref{geo-exact-structure}). In the literature, this kind of purity is often called \emph{geometrically purity} (as opposed to categorically purity, mentioned above). The study of geometrically purity was initiated in \cite{Fox} and was recently continued in \cite{EEO16} and \cite{EGO17}. 
Below we establish the \emph{geometrically pure exact structure}, $\mathscr{E}_\otimes$, on $\V_0$, and show that the exact category $(\V_0,\mathscr{E}_\otimes)$ has enough relative injectives (\prpref[Propositions~]{geometricalexactst} and \prpref[]{dual2}).

As mentioned in \cite[Rem.~2.8]{EGO17}, when both the categorically and the geometrically pure exact structures are available, the former is coarser than the latter, i.e.~one has
$\mathscr{E}_{\lambda} \subseteq \mathscr{E}_{\otimes}$. In general, this is a strict containment, however, in the locally finitely presentable categories $\V = \Mod{R}$, where $R$ is a commutative ring, one has $\mathscr{E}_{\aleph_0} = \mathscr{E}_{\otimes}$. See e.g.~\cite[Thm.~6.4]{JL}. As mentioned in \exaref{Ch-1}(b) below, this equality also holds for $\V=\Ch{R}$ with the modified total tensor product $\mTen$.

\medskip

Note that the examples found in \exaref[]{Ch} and \exaref[]{QcohX} all satisfy the following setup.

\begin{stp} 
\label{stp:setup-abelian-cosmos-1} 
In the rest of this section, $(\V,\otimes,I,[-,-])$ denotes a \emph{cosmos}, that is, a closed symmetric monoidal category which is bicomplete\footnote{\,Actually, we shall not use the bicompleteness of $\V_0$ until we get to \lemref{3T} and the subsequent results.}. We also assume that $\V_0$ is abelian\footnote{\,When we talk about an \textsl{abelian} closed symmetric monoidal category, we tacitly assume that the tensor product $-\otimes-$ and the internal hom $[-,-]$ are additive functors in each variable.} and that the category $\V_0$ has an injective cogenerator $E$. 
\end{stp}

Following Fox \cite{Fox} a morphism $f \colon X \rightarrow Y$ in $\V_0$ is said to be \textit{geometrically pure} if $f\otimes V \colon X\otimes V \rightarrow Y \otimes V$ is a monomorphism for every $V \in \V$. Note that a geometrically pure morphism is necessarily a monomorphism (take $V=I$).

\begin{dfn}
\label{dfn:geo-exact-structure} 
Let $\mathscr{E}_{\otimes}$ be the class of all short exact sequences in $\V_0$ which remain exact under the functor $-\otimes V$ for every $V \in \V$. We call $\mathscr{E}_{\otimes}$ the \emph{geometrically pure exact structure} on $\V_0$ (see \prpref{geometricalexactst} below).
Sequences in $\mathscr{E}_{\otimes}$ are called \textit{geometrically pure (short) exact sequences}. An object $J \in \V_0$ which is in\-jec\-tive relative to $\mathscr{E}_{\otimes}$ is called a \textit{geometrically pure injective} object. We set 
\begin{equation*}
  \PureInj{\V_0} \,=\, \{J \in \V_0 \,|\, \textnormal{$J$ is geometrically pure injective} \}\;.
\end{equation*}  
\end{dfn}

\begin{exa}
  \label{exa:Ch-1}
  Consider the abelian cosmos from \exaref{Ch}.
  \begin{prt}
  \item It is easy to see that a short exact sequence $\mathbb{S}$ in $\Ch{R}$ is  geometrically pure exact in $(\Ch{R},\tTen)$ if and only if $\mathbb{S}_n$ is a pure exact sequence of $R$-modules in each~degree~$n$. Therefore, geometrically pure injective objects in $(\Ch{R},\tTen)$ are precisely contractible chain complexes of pure injective $R$-modules; see \cite[Cor.~5.7]{Sto14}.
  
  \item The geometrically pure exact sequences in $(\Ch{R},\mTen)$ have been characterized in several ways in \cite[Thm.~2.5]{EG97} and \cite[Thm.~5.1.3]{MR1693036}. Namely, a short exact sequence $\mathbb{S}$ in $\Ch{R}$ is  geometrically pure exact in $(\Ch{R},\mTen)$ if and only if $\mathbb{S}$ is a categorically  pure exact sequence in $\Ch{R}$. Furthermore, if a chain complex $J$ of $R$-modules is a geometrically pure injective object in $(\Ch{R},\mTen)$, then $J_n$ and $\operatorname{Ker}\partial^J_n$ are pure injective $R$-modules for every integer $n$; see \cite[Prop.~5.1.4]{MR1693036}.
  \end{prt}
\end{exa}

\begin{exa}
  \label{exa:QcohX-1}
  Consider the abelian cosmos from \exaref{QcohX}\prtlbl{b}. For a quasi-seperated scheme $X$, a short exact sequence $\mathbb{S}$ is geometrically pure exact in $(\Qcoh{X},\otimes_{X})$ if and only if  $\mathbb{S}_x$ is a pure exact sequence of $\mathscr{O}_{X,x}$-modules for every $x \in X$. This is proved in \cite[Prop.~3.4 and Rem.~3.5]{EEO16}.
\end{exa}

\begin{prp}
\label{prp:geometricalexactst} 
The pair $(\V_0,\mathscr{E}_{\otimes})$ is an exact category.
\end{prp}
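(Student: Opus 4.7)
The plan is to deduce this as a direct application of Proposition \ref{prp:construction-of-exact-structure}, which was set up for precisely this kind of situation. I would take $\mathcal{C} = \V_0$ (which is abelian by Setup \ref{stp:setup-abelian-cosmos-1}) and let $\mathfrak{T}$ be the collection of functors
\begin{equation*}
  -\otimes V \colon \V_0 \longrightarrow \V_0 \qquad (V \in \V).
\end{equation*}
With this choice, the class $\mathscr{E}_{\mathfrak{T}}$ produced by Proposition \ref{prp:construction-of-exact-structure} is, by construction, exactly the class $\mathscr{E}_{\otimes}$ of short exact sequences in $\V_0$ that remain exact after applying $-\otimes V$ for every $V \in \V$.

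The only hypothesis I need to check in order to invoke Proposition \ref{prp:construction-of-exact-structure} is that each $T = -\otimes V$ is additive, takes values in an abelian category, and is either left exact or right exact. Additivity is part of Setup \ref{stp:setup-abelian-cosmos-1} (the tensor product is additive in each variable), and the target $\V_0$ is abelian. For exactness, the key point is that $\V$ is closed: the functor $-\otimes V$ admits the right adjoint $[V,-]$, hence preserves all colimits and is in particular right exact. Consequently every $T \in \mathfrak{T}$ satisfies the hypotheses of Proposition \ref{prp:construction-of-exact-structure}.

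I do not anticipate any genuine obstacle here; the proposition is essentially a bookkeeping corollary of the general construction. The only thing worth a brief remark is that $\mathscr{E}_{\otimes}$ is, by the same proposition, the \emph{finest} exact structure on $\V_0$ with respect to which every tensoring functor $-\otimes V$ becomes exact, which gives a conceptual characterization that matches the description of geometrically pure monomorphisms from Fox~\cite{Fox}.
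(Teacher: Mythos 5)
Your proposal is correct and takes essentially the same route as the paper, which likewise obtains \prpref{geometricalexactst} as the special case of \prpref{construction-of-exact-structure} with $\mathcal{C}=\V_0$ and $\mathfrak{T}$ the collection of functors $-\otimes V \colon \V_0 \to \V_0$ for $V \in \V$. Your verification that each such functor is additive and right exact (via the right adjoint $[V,-]$ coming from the closed structure) just makes explicit the routine hypotheses the paper leaves implicit.
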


\begin{proof}
This is known and implicit in \cite[(proof of) Lem. 3.6]{EGO17}. It is also a special case of \prpref{construction-of-exact-structure} with $\mathcal{C} = \V_0$ and $\mathfrak{T}$ the class of functors $- \otimes V \colon \V_0 \to \V_0$~where~$V \in \V$.
\end{proof}

\begin{lem}
\label{lem:dual} 
For every $X  \in \V$, the object $[X,E]$ is geometrically pure injective.
\end{lem}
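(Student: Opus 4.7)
The plan is to combine three ingredients: the definition of the geometrically pure exact structure $\mathscr{E}_\otimes$, the tensor-hom adjunction $\Hom_{\V_0}(-\otimes X, -) \cong \Hom_{\V_0}(-, [X,-])$ available because $\V$ is closed symmetric monoidal, and the injectivity of the cogenerator $E$ in the ordinary abelian category $\V_0$. Essentially, tensoring with $X$ is a left-exact operation on $\mathscr{E}_\otimes$-exact sequences, and precomposition with this operation followed by $\Hom_{\V_0}(-,E)$ realizes $\Hom_{\V_0}(-,[X,E])$.

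More concretely, to verify that $[X,E]$ is injective relative to $\mathscr{E}_\otimes$ I would fix an arbitrary geometrically pure short exact sequence
\begin{equation*}
0 \longrightarrow A \longrightarrow B \longrightarrow C \longrightarrow 0
\end{equation*}
in $\V_0$ and show that the induced sequence
\begin{equation*}
0 \longrightarrow \Hom_{\V_0}(C,[X,E]) \longrightarrow \Hom_{\V_0}(B,[X,E]) \longrightarrow \Hom_{\V_0}(A,[X,E]) \longrightarrow 0
\end{equation*}
is exact in $\Ab$. By \dfnref{geo-exact-structure} (taking $V = X$) the sequence $0 \to A\otimes X \to B\otimes X \to C\otimes X \to 0$ is exact in $\V_0$. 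Since $E$ is injective in $\V_0$, applying $\Hom_{\V_0}(-,E)$ preserves this short exact sequence, yielding an exact sequence in $\Ab$. The tensor-hom adjunction then gives a natural isomorphism of this sequence with the one above, finishing the argument.

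There is no real obstacle here: the whole proof is a one-line chase through the adjunction, and everything needed (closedness of $\V$, the abelian structure on $\V_0$, and injectivity of $E$) is already in \stpref{setup-abelian-cosmos-1}. The only thing worth being careful about is the naturality of the tensor-hom adjunction in the first variable, which guarantees that the comparison isomorphism between the two displayed sequences is in fact an isomorphism of sequences (rather than merely of individual terms).
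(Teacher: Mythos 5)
Your proof is correct and is essentially identical to the paper's: both tensor the geometrically pure sequence with $X$, apply $\V_0(-,E)$ using injectivity of $E$, and transport exactness back through the natural isomorphism $\V_0(\mathbb{S}\otimes X,E)\cong\V_0(\mathbb{S},[X,E])$. No gaps.
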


\begin{proof}
For any geometrically pure exact sequence $\mathbb{S}$, the sequence $\mathbb{S} \otimes X$ is exact, and hence so is $\V_0(\mathbb{S} \otimes X, E)$, as $E$ is injective. The isomorphism $\V_0(\mathbb{S},[X,E]) \cong \V_0(\mathbb{S} \otimes X, E)$ shows that $\V_0(\mathbb{S},[X,E])$ is exact, which means that $[X,E]$ is geometrically pure injective.
\end{proof}

\begin{lem}
\label{lem:gps1} 
A short exact sequence $\mathbb{S}$ in $\V_0$ is geometrically pure exact if and only if ~$\mspace{1.5mu}[\mathbb{S},E]$ is a split short exact sequence in $\V_0$.
\end{lem}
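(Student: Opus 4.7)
The plan is to use the closed-monoidal adjunction to convert both sides of the biconditional into statements about $\V_0(V,-)$ applied to $[\mathbb{S},E]$, and then harvest the information carried by the injective cogenerator $E$.

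The first thing I would establish is the key bridge: for any $V\in\V$ and any term $W$ of $\mathbb{S}$, the adjunction gives a natural isomorphism $\V_0(V,[W,E])\cong\V_0(V\otimes W,E)\cong\V_0(W\otimes V,E)$. Applied termwise to the three terms of $\mathbb{S}$, this yields a natural isomorphism of sequences
\begin{equation*}
\V_0(V,[\mathbb{S},E])\,\cong\,\V_0(\mathbb{S}\otimes V,E)\;,
\end{equation*}
which is the mechanism that lets the tensor side talk to the internal-hom side.

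For the forward direction, assuming $\mathbb{S}$ is geometrically pure exact, the sequence $\mathbb{S}\otimes V$ is short exact for every $V$; since $E$ is injective, $\V_0(\mathbb{S}\otimes V,E)$ is short exact, and by the bridge so is $\V_0(V,[\mathbb{S},E])$ for every $V\in\V$. A short Yoneda-type argument then shows that $[\mathbb{S},E]$ itself is short exact in $\V_0$. To produce the splitting, I would specialize to $V=[X,E]$: the resulting surjection $\V_0([X,E],[Y,E])\twoheadrightarrow\V_0([X,E],[X,E])$ lets me lift $\mathrm{id}_{[X,E]}$ to a section of the map $[Y,E]\to[X,E]$, so the sequence $[\mathbb{S},E]$ splits.

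For the reverse direction, assuming $[\mathbb{S},E]$ is split short exact, the additive functor $\V_0(V,-)$ preserves it, so $\V_0(V,[\mathbb{S},E])$ is (split) short exact for every $V$, and hence $\V_0(\mathbb{S}\otimes V,E)$ is short exact via the bridge. Since $E$ is an injective cogenerator, $\V_0(-,E)$ is exact and faithful, hence reflects exactness: a nonzero kernel on the left-most term of $\mathbb{S}\otimes V$ would admit a nonzero map to $E$, this map would extend to $X\otimes V\to E$ by injectivity, and would then contradict the surjectivity $\V_0(Y\otimes V,E)\twoheadrightarrow\V_0(X\otimes V,E)$. Hence $\mathbb{S}\otimes V$ is short exact for every $V$, so $\mathbb{S}$ is geometrically pure exact. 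The mildest friction I anticipate is the Yoneda-type step in the forward direction (deducing exactness of $[\mathbb{S},E]$ from that of $\V_0(V,[\mathbb{S},E])$ for all $V$); this is standard, and the splitting then falls out automatically.
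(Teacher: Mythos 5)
Your proposal is correct and follows essentially the same route as the paper: both arguments hinge on the adjunction isomorphism $\V_0(\mathbb{S}\otimes V,E)\cong\V_0(V,[\mathbb{S},E])$, together with the fact that the injective cogenerator $E$ detects exactness of $\mathbb{S}\otimes V$ and the standard criterion that $[\mathbb{S},E]$ is split short exact precisely when $\V_0(V,[\mathbb{S},E])$ is exact for every $V$. The only difference is that you spell out in detail the two ``if and only if'' facts that the paper invokes tersely.
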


\begin{proof}
As $E$ is an injective cogenerator in $\V_0$, the sequence $\mathbb{S}$ is geometrically pure exact if and only if $\V_0(\mathbb{S} \otimes V, E)$ is a short exact sequence in $\Ab$ for every $V \in \V$. And $[\mathbb{S},E]$ is a split short exact sequence in $\V_0$ 
if and only if $\V_0(V,[\mathbb{S},E])$ is a short exact sequence in $\Ab$ for every $V \in \V$. The isomorphism $\V_0(\mathbb{S} \otimes V, E) \cong \V_0(V,[\mathbb{S},E])$ yields the conclusion.
\end{proof}

\begin{lem}
\label{lem:ev1} 
The functor $[-,E] \colon \V_0^{\mathrm{op}}\rightarrow \V_0$ is faithful.
\end{lem}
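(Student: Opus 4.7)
The plan is to reduce faithfulness of $[-,E]$ to the cogenerator property of $E$ by means of the tensor-hom adjunction. Suppose $f \colon X \to Y$ is a morphism in $\V_0$ with $[f,E] = 0$; we must show $f = 0$.

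First I would apply the representable functor $\V_0(I,-) \colon \V_0 \to \Ab$ to the morphism $[f,E] \colon [Y,E] \to [X,E]$, obtaining a zero homomorphism $\V_0(I,[Y,E]) \to \V_0(I,[X,E])$. Using the adjunction isomorphism $\V_0(V,[Z,E]) \cong \V_0(V \otimes Z, E)$ from \ref{monoidal-category} together with the unit isomorphism $I \otimes Z \cong Z$, this identifies (naturally in $Z$) with the map $\V_0(Y,E) \to \V_0(X,E)$ given by precomposition with $f$. Hence $\V_0(f,E) = 0$, i.e.\ $h \circ f = 0$ for every morphism $h \colon Y \to E$.

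Second I would invoke the hypothesis in \stpref{setup-abelian-cosmos-1} that $E$ is a cogenerator of $\V_0$, which precisely means that if $\V_0(f,E) = 0$ then $f = 0$. This finishes the proof.

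No substantial obstacle is expected, since the argument is a direct chase through the tensor-hom adjunction combined with the defining property of a cogenerator. The only point worth noting is that it is enough to test with $V = I$ in the isomorphism $\V_0(V \otimes \mathbb{S}, E) \cong \V_0(V,[\mathbb{S},E])$ that was already used in \lemref{gps1}; the general $V$ plays no role for faithfulness.
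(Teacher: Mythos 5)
Your proof is correct and is essentially the contrapositive formulation of the paper's own argument: both rest on the natural isomorphism $\V_0(-,E) \cong \V_0(I,[-,E])$ obtained from the tensor--hom adjunction with $V = I$, combined with the cogenerator property of $E$. No issues.
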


\begin{proof}
There is a natural isomorphism
$\V_0(-,E) \cong \V_0(I,[-,E])$. If $f \neq 0$ is a  morphism, then $\V_0(f,E) \neq 0$, as $E$ is a cogenerator in $\V_0$, so $\V_0(I,[f,E]) \neq 0$ and thus $[f,E] \neq 0$.  
\end{proof}

\begin{obs}
\label{obs:biduality} 
There is a pair of adjoint functors $(F,G)$ as follows:
\begin{equation*}
\xymatrix@C=5pc{
  \V_0
  \ar@<0.6ex>[r]^-{F\,=\,[-,E]}
  &
  \V_0^{\mathrm{op}}\;.\mspace{-8mu}
  \ar@<0.6ex>[l]^-{G\,=\,[-,E]}
}
\end{equation*}
Indeed, for all $X \in \V$ and $Y \in \V^{\mathrm{op}}$ (equivalently, $Y \in \V$) one has:
\begin{align*}
  \V_0^{\mathrm{op}}(F(X),Y) &\,=\,
  \V_0(Y,FX) \,=\, \V_0(Y,[X,E]) \,\cong\, \V_0(Y \otimes X,E)
  \\
  &\,\cong\, \V_0(X \otimes Y,E) \,\cong\, \V_0(X,[Y,E]) \,=\, \V_0(X,G(Y))\;.
\end{align*}  
Write $\varepsilon$ for the counit of the adjunction. For every object $Y$ in $\V$, note that $\varepsilon_Y$ is an element in $\V_0^{\mathrm{op}}(FG(Y),Y) = \V_0(Y,FG(Y))$, so $\varepsilon_Y$ is a morphism $Y \to FG(Y) = [[Y,E],E]$ in $\V_0$.
\end{obs}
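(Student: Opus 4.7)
The plan is to verify the claimed adjunction by producing a natural bijection
\[
\V_0^{\mathrm{op}}(F(X),Y) \;\cong\; \V_0(X,G(Y))
\]
in $X,Y \in \V_0$. First I would unwind the left-hand side via the definition of the opposite category so that it reads $\V_0(Y,[X,E])$, and rewrite the right-hand side as $\V_0(X,[Y,E])$. The task then reduces to supplying a natural isomorphism $\V_0(Y,[X,E]) \cong \V_0(X,[Y,E])$ in both variables, which is essentially the well-known ``swap'' isomorphism associated with a dualizing object.

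To construct this isomorphism, I would apply the closed-structure adjunction $\V_0(A\otimes B,C)\cong \V_0(A,[B,C])$ twice, inserting the symmetry $c_{X,Y}\colon Y\otimes X \xrightarrow{\cong} X\otimes Y$ between the two applications; this is exactly the chain of isomorphisms already displayed in the statement. Each of the three constituent maps is natural in both arguments, so naturality of the composite is automatic and requires no separate verification. This establishes $F\dashv G$.

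For the final sentence about the counit, I would invoke the standard recipe: $\varepsilon_Y$ is the image of $\mathrm{id}_{G(Y)} = \mathrm{id}_{[Y,E]}$ under the bijection at $X = G(Y)$. Tracing through the three steps, $\varepsilon_Y$ is the adjoint transpose (under the closed structure) of the evaluation morphism $Y\otimes[Y,E]\to E$ corresponding to $\mathrm{id}_{[Y,E]}$; consequently $\varepsilon_Y$ is the canonical biduality map $Y \to [[Y,E],E]$ in $\V_0$, as required. The main (and only) potential obstacle is notational: one must keep the direction of morphisms straight when crossing between $\V_0$ and $\V_0^{\mathrm{op}}$, which is handled cleanly by the initial reformulation. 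Beyond this, the argument uses nothing deeper than the defining adjunction of the closed monoidal structure and the symmetry of $\otimes$.
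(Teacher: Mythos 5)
Your proposal is correct and follows essentially the same route as the paper: both establish $F\dashv G$ by unwinding the opposite-category hom-sets and composing the tensor--hom adjunction twice with the symmetry $Y\otimes X\cong X\otimes Y$ in between, and both then read off the counit $\varepsilon_Y$ as a morphism $Y\to[[Y,E],E]$ in $\V_0$. Your extra identification of $\varepsilon_Y$ as the transpose of the evaluation map is a harmless elaboration of what the paper leaves implicit.
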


\begin{prp}
\label{prp:dual2}
For every $Y \in \V$ the morphism $\varepsilon_Y \colon Y \to [[Y,E],E]$ from  \obsref{biduality} is a geometrically pure monomorphism. In particular, the exact category $(\V_0, \mathscr{E}_{\otimes})$ has enough relative injectives (= enough geometrically pure injectives).
\end{prp}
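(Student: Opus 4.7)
The plan is to produce, for every $Y \in \V$, a geometrically pure monomorphism from $Y$ into the geometrically pure injective object $[[Y,E],E]$, namely the candidate $\varepsilon_Y$ itself, and then to combine this with \lemref{dual} to conclude that $(\V_0, \mathscr{E}_{\otimes})$ has enough relative injectives. The starting point is the self-adjunction $(F,G) = ([-,E],[-,E])$ recorded in \obsref{biduality}: because both adjoints coincide with $[-,E]$, the triangle identity $G(\varepsilon) \circ \eta G = \operatorname{id}_G$ specialized at $Y$ reads
\[
[Y,E] \xrightarrow{\;\eta_{[Y,E]}\;} [[[Y,E],E],E] \xrightarrow{\;[\varepsilon_Y,E]\;} [Y,E] \;=\; \operatorname{id}_{[Y,E]},
\]
so $[\varepsilon_Y,E]$ is a split epimorphism in $\V_0$.

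From this I will first deduce that $\varepsilon_Y$ is itself a monomorphism: any $f \colon W \to Y$ with $\varepsilon_Y f = 0$ yields $[f,E] \circ [\varepsilon_Y,E] = 0$ after applying $[-,E]$, and post-composing with a section of the split epimorphism $[\varepsilon_Y,E]$ forces $[f,E] = 0$; the faithfulness of $[-,E]$ in \lemref{ev1} then gives $f = 0$. With monicity in hand, let $C$ denote the cokernel of $\varepsilon_Y$ and consider the short exact sequence
\[
\mathbb{S} \colon \; 0 \to Y \xrightarrow{\;\varepsilon_Y\;} [[Y,E],E] \to C \to 0.
\]
Applying $[-,E]$ produces a complex $0 \to [C,E] \to [[[Y,E],E],E] \xrightarrow{[\varepsilon_Y,E]} [Y,E] \to 0$, which is exact at the left and middle positions because $G \colon \V_0^{\mathrm{op}} \to \V_0$ is a right adjoint and hence preserves limits, i.e., sends cokernels in $\V_0$ to kernels. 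The split epi property of $[\varepsilon_Y,E]$ gives exactness on the right and, more strongly, a splitting of the whole sequence in $\V_0$. By \lemref{gps1}, a splitting of $[\mathbb{S},E]$ is exactly the criterion for $\mathbb{S}$ to be geometrically pure exact, so $\varepsilon_Y$ is a geometrically pure monomorphism.

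The \emph{in particular} claim is then immediate: \lemref{dual} applied with $X = [Y,E]$ identifies $[[Y,E],E]$ as a geometrically pure injective object, so every $Y$ admits a geometrically pure monic into a geometrically pure injective, which is exactly the content of having enough $\mathscr{E}_{\otimes}$-injectives. I do not expect a serious obstacle here; the substance of the argument is the interplay between the triangle identity (which supplies the split epimorphism $[\varepsilon_Y,E]$) and \lemref{gps1} (which converts a splitting of $[\mathbb{S},E]$ into geometrically pure exactness of $\mathbb{S}$). The only step that might deserve an explicit comment is the left exactness of the contravariant functor $[-,E]$, which follows automatically from the adjunction in \obsref{biduality} together with the general fact that right adjoints preserve limits.
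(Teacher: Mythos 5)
Your proposal is correct and follows essentially the same route as the paper: both arguments extract the split epimorphism $[\varepsilon_Y,E]$ from the adjunction in \obsref{biduality} (you via the explicit triangle identity, the paper by citing Mac\,Lane's adjoint functor theory), use the faithfulness of $[-,E]$ from \lemref{ev1} to get monicity, invoke \lemref{gps1} to convert the splitting of $[\mathbb{S},E]$ into geometric purity, and finish with \lemref{dual}. Your explicit verification that $[\mathbb{S},E]$ is a genuinely split \emph{short exact} sequence (left exactness of $[-,E]$ plus the splitting on the right) is a welcome detail that the paper leaves implicit.
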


\begin{proof}
  First we show that $\varepsilon_Y$ is monic. Let $f$ be a morphism in $\V_0$ with  $\varepsilon_Y \circ f = 0$. It~fol\-lows that $[f,E] \circ [\varepsilon_Y,E] = 0$ in $\V_0$. By adjoint functor theory, see \mbox{\cite[\S IV.1 Thm.~1]{MacLane}}, the morphism $G(\varepsilon_Y) = [\varepsilon_Y,E]$ has a right-inverse (which is actually $\varepsilon_{[Y,E]}$, but this is not important), and hence $[f,E]=0$. Now \lemref{ev1} implies $f=0$, so $\varepsilon_Y$ is a monomorphism. To show that $\varepsilon_Y$ is a geometrically pure monomorphism, consider the short exact sequence 
\begin{equation*}
  \mathbb{S} \,=\,
  \xymatrix@C=1.5pc{
    0 \ar[r] & Y \ar[r]^-{\varepsilon_Y} & [[Y,E],E] \ar[r] & \Coker \varepsilon_Y \ar[r] & 0
  }.
\end{equation*}
By \lemref{gps1} we need to prove that $[\mathbb{S},E]$ splits, but as already argued above, $[\varepsilon_Y,E]$ is a split epimorphism, so we are done. To see that $(\V_0, \mathscr{E}_{\otimes})$ has enough relative injectives, it remains to note that $[[Y,E],E]$ is a geometrically pure injective object by \lemref{dual}.
\end{proof}

Recall from \cite[Dfn.~6.1.1]{rha} the notions of \emph{preenvelopes} and \emph{envelopes}.

\begin{thm}
\label{thm:pureinjectiveenvelope}
Assume  that $\V_0$ is Grothendieck (that is, $\V$ is a Grothendieck cosmos). Every object in $\V_0$ has a geometrically pure injective envelope, that is, an envelope w.r.t.~the class $\PureInj{\V_0}$.
\end{thm}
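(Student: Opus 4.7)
The plan is to upgrade the geometrically pure injective preenvelope produced in \prpref{dual2} into an envelope by invoking standard envelope-existence machinery for Grothendieck categories. The strategy rests on verifying two closure properties of the class $\PureInj{\V_0}$ and then applying a general theorem to the effect that any preenveloping class in a Grothendieck category which is closed under products and direct summands is enveloping.

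First I would verify closure of $\PureInj{\V_0}$ under direct summands (immediate from the definition of injectivity relative to the exact structure $\mathscr{E}_\otimes$, since a direct summand of a functor that sends $\mathscr{E}_\otimes$-sequences to short exact sequences has the same property) and under arbitrary products, which exist in $\V_0$ since it is Grothendieck. For the latter, if $\{J_i\}_{i \in I}$ is a family in $\PureInj{\V_0}$ and $\mathbb{S}$ is geometrically pure exact, then the natural isomorphism $\V_0(\mathbb{S}, \prod_i J_i) \cong \prod_i \V_0(\mathbb{S}, J_i)$ combined with exactness of products of short exact sequences in $\Ab$ yields $\prod_i J_i \in \PureInj{\V_0}$.

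With these closure properties and preenvelopes in hand, I would invoke a classical Enochs-style theorem (in the spirit of \cite{rha}) promoting preenvelopes to envelopes in this setting. The envelope is obtained by a Wakamatsu-style trimming: starting from $\varepsilon_Y : Y \to [[Y,E],E]$, one considers the poset of pure-injective subobjects $P' \hookrightarrow [[Y,E],E]$ through which $\varepsilon_Y$ factors, ordered by reverse inclusion. This poset is a set by well-poweredness of $\V_0$, and Zorn's lemma produces a minimal element $P^*$; the restricted map $Y \to P^*$ is then the desired envelope.

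The main obstacle lies in this final envelope-from-preenvelope step: one must show that the minimal subobject $P^*$ produced by Zorn actually satisfies the envelope axiom, namely, that every endomorphism of $P^*$ fixing the structural map $Y \to P^*$ is an isomorphism. The standard argument uses closure of $\PureInj{\V_0}$ under direct summands to exclude nontrivial idempotents on $P^*$, since any proper idempotent would yield a strictly smaller pure-injective subobject through which $\varepsilon_Y$ still factors, contradicting minimality. All remaining verifications are formal, so the real content of the proof lies in the closure checks above together with the invocation of the general envelope-existence theorem for Grothendieck categories.
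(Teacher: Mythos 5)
Your reduction to ``closure under products and direct summands plus a general preenvelope-to-envelope principle'' is where the argument breaks down: there is no theorem asserting that a preenveloping class closed under products and summands in a Grothendieck category is enveloping, and this passage is precisely the hard point of the statement. (The closure properties of $\PureInj{\V_0}$ you verify are correct but essentially costless.) The Zorn's lemma argument you sketch in place of such a principle fails at two concrete steps. First, to extract a minimal geometrically pure injective subobject $P^*$ of $[[Y,E],E]$ through which $\varepsilon_Y$ factors, every descending chain in your poset must admit a lower bound \emph{in the poset}; but a filtered intersection of geometrically pure injective subobjects need not be geometrically pure injective --- closure under products and summands gives no control over intersections (already intersections of injective submodules over a general ring fail to be injective), so Zorn's lemma does not apply. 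Second, even granting a minimal $P^*$, the envelope axiom does not follow: an endomorphism $h$ of $P^*$ with $h\mu=\mu$ (where $\mu\colon Y\to P^*$) need not be idempotent, and its image need not be a geometrically pure injective subobject, so minimality of $P^*$ yields no contradiction. Excluding idempotents only rules out proper direct summands of $P^*$ through which $\mu$ factors, which is strictly weaker than the envelope property.

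The paper's proof supplies the missing ingredient differently: it shows that the pair $(\mathbb{A},\mathbb{J})$, with $\mathbb{A}$ the geometrically pure monomorphisms and $\mathbb{J}=\PureInj{\V_0}$, is an \emph{injective structure determined by a class} in the sense of \cite[Dfns.~6.6.2 and 6.6.3]{rha}; the essential point is the two-way determination between $\mathbb{A}$ and $\mathbb{J}$, namely that a morphism is a geometrically pure monomorphism \emph{if and only if} every $J\in\mathbb{J}$ is injective relative to it, proved via the objects $[V,E]$ from \lemref{dual}. Envelope existence is then \cite[Thm.~6.6.4(1)]{rha}, whose proof carries the genuinely non-trivial content (it generalizes the existence of pure-injective hulls of modules, itself a substantive theorem). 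If you wish to avoid citing that machinery, you would need to reproduce an argument of comparable depth; it cannot be replaced by the closure checks and the trimming argument you propose.
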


\begin{proof}
Let $\mathbb{A}$ be the class of geometrically pure monomorphisms and $\mathbb{J} = \PureInj{\V_0}$ be the class of geometrically pure injective objects in $\V_0$. The following conditions hold:
\begin{rqm}
\item An object $J \in \V_0$ belongs to $\mathbb{J}$ if and only if $\V_0(Y,J) \to \V_0(X,J) \to 0$ is exact in $\Ab$ for every $X \to Y$ in $\mathbb{A}$.
\item A morphism $X \to Y$ in $\V_0$ belongs to $\mathbb{A}$ if and only if $\V_0(Y,J) \to \V_0(X,J) \to 0$ is exact in $\Ab$ for every $J \in \mathbb{J}$.
\item Every object in $\V_0$ has a $\mathbb{J}$-preenvelope.
\end{rqm}
Indeed, the ``only if'' part of \rqmlbl{1} holds by definition of geometrically pure injective objects. For the ``if'' part, take by \prpref{dual2} a morphism $J \to J'$ in $\mathbb{A}$ with $J' \in \mathbb{J}$. By~assump\-tion, $\V_0(J',J) \to \V_0(J,J) \to 0$ is exact, so $\mathrm{id}_J$ has a left-inverse $J' \to J$. Thus $J$ is a direct summand in $J' \in \mathbb{J}$ and it follows that $J \in \mathbb{J}$. The ``only if'' part of \rqmlbl{2} holds by definition of geometrically pure injective objects. For the ``if'' part, let $f \colon X \to Y$ be any morphism~in~$\V_0$. For every $V \in \V_0$ one has $[V,E] \in \mathbb{J}$ by \lemref{dual}, so $\V_0(f,[V,E])$ is surjective by assump\-tion. As in the proof of \lemref[]{dual}, this menas that 
$f \otimes V$ is monic, and hence $f$ is in $\mathbb{A}$. Condition \rqmlbl{3} holds by \prpref{dual2}.

These arguments show that $(\mathbb{A},\mathbb{J})$ is an \emph{injective structure} in the sense of \cite[Dfn.~6.6.2]{rha}. Even though the definitions and results (with proofs) about such structures found in \cite{rha} are formulated for the category of modules over a ring, they carry over to any Grothendieck cosmos. Since the injective structure $(\mathbb{A},\mathbb{J})$ is \emph{determined} by the class $\mathcal{G}:=\V_0$ in the sense of \cite[Dfn.~6.6.3]{rha}, the desired conclusion follows from \cite[Thm.~6.6.4(1)]{rha}.
\end{proof}

It is well-known that if $\mathcal{K}$ is a small ordinary category, then the category of func\-tors $\mathcal{K} \to \Ab$ is abelian, and even Grothendieck. In \cite[Thm.~4.2]{AG16} it is shown that if $\mathcal{K}$ is a small $\V$-category, then the ordinary category $[\mathcal{K},\V]_0$ of \textsl{$\V$-functors} $\mathcal{K} \to \V$ is abelian too, and even Grothendieck if $\V$ is\footnote{\,Note that in \emph{loc.~cit.} the symbol $[\mathcal{K},\V]$ is used for the \textsl{ordinary} category of $\V$-functors $\mathcal{K} \to \V$ (but we use the symbol $[\mathcal{K},\V]_0$) whereas the \textsl{$\V$-category} of such functors is denoted by $\mathcal{F}(\mathcal{K})$ (but we use the symbol $[\mathcal{K},\V]$).}. Moreover, (co)limits, in particular, (co)kernels, in the category $[\mathcal{K},\V]_0$ are formed objectwise. Below we construct a certain exact structure on $[\mathcal{K},\V]_0$. 

\begin{lem} 
\label{lem:3T} 
Let $\mathcal{K}$ be a small $\V$-category and let $0 \to F' \to F \to F'' \to 0$ be an exact sequence in the abelian category $[\mathcal{K},\V]_0$. For every $\V$-functor $G \colon \mathcal{K}^\mathrm{op} \to \V$ the sequence $G \star F' \to G \star F \to G \star F'' \to 0$ is exact in $\V_0$.
\end{lem}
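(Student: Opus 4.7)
My plan is to verify the claim by reducing it, via the defining universal property of the weighted colimit, to the left-exactness of the internal hom and to the fact that (co)kernels in $[\mathcal{K},\V]_0$ and $[\mathcal{K}^{\mathrm{op}},\V]_0$ are formed objectwise. Since $\V_0$ is abelian, the sequence $G \star F' \to G \star F \to G \star F'' \to 0$ is exact if and only if for every $A \in \V_0$ the sequence $0 \to \V_0(G \star F'', A) \to \V_0(G \star F, A) \to \V_0(G \star F', A)$ is exact in $\Ab$, so this latter condition is what I would verify.

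I would apply $\V_0(I,-)$ to the $\V$-natural isomorphism \eqref{eq-wcolim} (with $\A = \V$) to obtain the ordinary natural isomorphism $\V_0(G\star F, A) \cong [\mathcal{K}^{\mathrm{op}},\V]_0(G, [F(-), A])$. This translates the task into showing that, for every $A$, the sequence
$0 \to [\mathcal{K}^{\mathrm{op}},\V]_0(G, [F''(-),A]) \to [\mathcal{K}^{\mathrm{op}},\V]_0(G, [F(-),A]) \to [\mathcal{K}^{\mathrm{op}},\V]_0(G, [F'(-),A])$
is exact in $\Ab$. Because $[\mathcal{K}^{\mathrm{op}},\V]_0$ is abelian (by the cited result of Al~Hwaeer and Garkusha), the hom-functor $[\mathcal{K}^{\mathrm{op}},\V]_0(G,-)$ is left-exact, so it suffices to prove that
$0 \to [F''(-),A] \to [F(-),A] \to [F'(-),A]$
is exact in $[\mathcal{K}^{\mathrm{op}},\V]_0$.

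Using that (co)kernels in $[\mathcal{K}^{\mathrm{op}},\V]_0$ are formed objectwise, this reduces further to showing that for every $k\in\mathcal{K}$ the sequence $0\to[F''(k),A]\to[F(k),A]\to[F'(k),A]$ is exact in $\V_0$. The hypothesis that $F' \to F \to F'' \to 0$ is exact in $[\mathcal{K},\V]_0$ gives, by the same objectwise principle, exactness of $F'(k)\to F(k)\to F''(k)\to 0$ in $\V_0$, and applying the contravariant internal-hom functor $[-,A]$---which is left-exact since it is right adjoint to the functor $-\otimes A$---produces precisely the required exact sequence. None of the steps presents a genuine obstacle; the only piece requiring care is the passage in the second paragraph from $\V$-enriched hom-objects to ordinary hom-sets via the functor $\V_0(I,-)$, which relies on the standard identity $\V_0(I,[\mathcal{K}^{\mathrm{op}},\V](G,H)) = [\mathcal{K}^{\mathrm{op}},\V]_0(G,H)$ for the underlying ordinary category.
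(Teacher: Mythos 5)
Your proposal is correct, and at its core it rests on the same fact as the paper's proof---the defining adjunction \eqref{eq-wcolim} of the weighted colimit---but it executes the argument quite differently. The paper's proof is two lines: it invokes the symmetry $G \star F \cong F \star G$ to recognize $G \star - \cong - \star G \colon [\mathcal{K},\V]_0 \to \V_0$ as a left adjoint (of $A \mapsto [G(-),A]$), and concludes right exactness abstractly. You never form that right adjoint; instead you verify the cokernel universal property by hand, passing through $\V_0(G\star F,A) \cong [\mathcal{K}^\mathrm{op},\V]_0(G,[F(-),A])$, the left exactness of the ordinary hom functor $[\mathcal{K}^\mathrm{op},\V]_0(G,-)$, the objectwise computation of kernels in $[\mathcal{K}^\mathrm{op},\V]_0$, and the left exactness of $[-,A]$. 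What your route buys is a more elementary, self-contained verification that makes visible exactly where each hypothesis enters; what the paper's route buys is brevity and the slightly stronger packaging of the statement as ``$G\star-$ is right exact because it is a left adjoint.'' One small imprecision: the contravariant functor $[-,A]\colon \V_0^\mathrm{op}\to\V_0$ is not literally ``right adjoint to $-\otimes A$'' (that describes the covariant $[A,-]$); the correct justification is the natural isomorphism $\V_0(Y,[X,A]) \cong \V_0(X\otimes Y,A) \cong \V_0(X,[Y,A])$, which exhibits $[-,A]$ as right adjoint to its own opposite and hence as carrying cokernel sequences in $\V_0$ to kernel sequences. This is a one-line repair and does not affect the validity of the argument.
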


\begin{proof}
It follows immediately from the fact that $G \star F \cong F \star G$, see \cite[eq.~(3.9)]{Kelly}, and from the axiom (\ref{eq:eq-wcolim}) in the definition of weighted colimits, that $(- \star G, [G,-])$ is an adjoint pair. This implies that the functor $G \star - \cong - \star G$ is right exact.
\end{proof}

\begin{prp}
\label{prp:star}
Let $\mathcal{K}$ be a small $\V$-category and let $0 \to F' \to F \to F'' \to 0$ be an exact sequence in $[\mathcal{K},\V]_0$. The following conditions are equivalent:
\begin{eqc}
\item $0 \to G \star F' \to G \star F \to G \star F'' \to 0$ is an exact sequence in $\V_0$ for every $G \in [\mathcal{K}^\mathrm{op},\V]$.
\item $0 \to [F'',E] \to [F,E] \to [F',E] \to 0$ is a split short exact sequence in $[\mathcal{K}^\mathrm{op},\V]_0$.
\end{eqc}
\end{prp}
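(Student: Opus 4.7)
The plan is to use the natural isomorphism
\begin{equation*}
\V_0(G \star F, E) \,\cong\, [\mathcal{K}^\mathrm{op},\V]_0(G,[F,E])\,,
\end{equation*}
obtained by specializing the defining axiom~\eqref{eq-wcolim} of weighted colimits to $\mathcal{A}=\V$ and $A=E$ and then passing to underlying sets; here $[F,E]$ denotes the composite $\V$-functor $[-,E]\circ F\colon\mathcal{K}^\mathrm{op}\to\V$ that appears in~\eqclbl{ii}. Combined with the hypothesis that $E$ is an injective cogenerator of $\V_0$---so that $\V_0(-,E)$ is exact and reflects short exactness---this adjunction converts weighted-colimit statements in $\V_0$ into Hom-statements in the abelian category $[\mathcal{K}^\mathrm{op},\V]_0$.

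For~\eqclbl{i}~$\Rightarrow$~\eqclbl{ii}, I would apply the exact functor $\V_0(-,E)$ to the short exact sequence in~\eqclbl{i} and rewrite each term through the adjunction above to conclude that, for every $G\in[\mathcal{K}^\mathrm{op},\V]$, the sequence
\begin{equation*}
0 \to [\mathcal{K}^\mathrm{op},\V]_0(G,[F'',E]) \to [\mathcal{K}^\mathrm{op},\V]_0(G,[F,E]) \to [\mathcal{K}^\mathrm{op},\V]_0(G,[F',E]) \to 0
\end{equation*}
is exact in $\Ab$. The composite $[F'',E]\to[F,E]\to[F',E]$ vanishes, being obtained by applying the contravariant functor $[-,E]$ to the zero composite $F'\to F\to F''$. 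A standard Yoneda-style argument then shows that short-exactness of $[\mathcal{K}^\mathrm{op},\V]_0(G,-)$ applied to such a three-term zero-composite sequence, valid for every $G$, already forces $0\to[F'',E]\to[F,E]\to[F',E]\to 0$ to be short exact in $[\mathcal{K}^\mathrm{op},\V]_0$; testing moreover on $G=[F',E]$ lifts $\mathrm{id}_{[F',E]}$ to a section of $[F,E]\to[F',E]$, thereby splitting the sequence.

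For~\eqclbl{ii}~$\Rightarrow$~\eqclbl{i}, I would apply the additive functor $[\mathcal{K}^\mathrm{op},\V]_0(G,-)$ to~\eqclbl{ii}; since additive functors preserve split exact sequences, the result is a short exact sequence in $\Ab$, which the adjunction identifies with $0\to \V_0(G\star F'',E)\to \V_0(G\star F,E)\to \V_0(G\star F',E)\to 0$. \lemref{3T} already delivers exactness of $G\star F'\to G\star F\to G\star F''\to 0$, so the only outstanding claim is injectivity of $G\star F'\to G\star F$. Let $K$ denote its kernel; applying the exact functor $\V_0(-,E)$ to the inclusion $K\hookrightarrow G\star F'$ yields a surjection $\V_0(G\star F',E)\twoheadrightarrow \V_0(K,E)$, while the vanishing composite $K\to G\star F'\to G\star F$ makes the composite $\V_0(G\star F,E)\to \V_0(G\star F',E)\to \V_0(K,E)$ zero. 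Since the first of these two arrows is epic (by~\eqclbl{ii} via the adjunction), the second must be zero, whence $\V_0(K,E)=0$ and therefore $K=0$ because $E$ is a cogenerator.

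The main subtlety, as I anticipate it, is that the sequence in~\eqclbl{ii} being short exact---and not merely left exact---is not automatic from the abelian structure alone: by~\obsref{biduality} the contravariant functor $[-,E]$ is only left exact in general, so the right-hand surjectivity must be extracted from the stronger assumption~\eqclbl{i}. The virtue of the Hom-testing route above is that, through the adjunction, it manufactures short exactness and splitness of~\eqclbl{ii} simultaneously, rather than requiring two separate arguments.
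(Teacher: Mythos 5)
Your proposal is correct and follows essentially the same route as the paper: both rest on the single isomorphism $\V_0(G\star\mathbb{S},E)\cong[\mathcal{K}^\mathrm{op},\V]_0(G,[\mathbb{S},E])$ obtained from the weighted-colimit axiom \eqref{eq-wcolim}, together with the facts that exactness in $\V_0$ can be tested against the injective cogenerator $E$ and that split exactness in $[\mathcal{K}^\mathrm{op},\V]_0$ is detected by $\Hom(G,-)$ for all $G$. The paper merely compresses into two biconditionals the steps you spell out explicitly (the Yoneda-style splitting via $G=[F',E]$ and the kernel argument for monicity), so your write-up is a more detailed version of the same argument.
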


\begin{proof}
Let $\mathbb{S}$ be the given exact sequence. By the definition of weighted colimits, see \eqref{eq-wcolim}, there is an isomorphism $[G \star \mathbb{S}, E] \cong [\mathcal{K}^{\mathrm{op}},\V](G, [\mathbb{S},E])$ of sequences in $\V_0$ and thus an induced isomorphism of sequences in $\Ab$,
\begin{equation}
  \label{eq:eq-wcolim-app}
\V_0(G \star \mathbb{S}, E) \,\cong\, [\mathcal{K}^{\mathrm{op}},\V]_0(G, [\mathbb{S},E])\;.
\end{equation}
As $E$ is an injective cogenerator in $\V_0$, condition \eqclbl{i} holds if and only if the left-hand side in \eqref{eq-wcolim-app} is exact for every $G \in [\mathcal{K}^\mathrm{op},\V]$. Evidently, \eqclbl{ii} holds if and only if right-hand side in \eqref{eq-wcolim-app} is exact for every $G \in [\mathcal{K}^\mathrm{op},\V]$. Hence \eqclbl{i} and \eqclbl{ii} are equivalent.
\end{proof}

\begin{dfn}
\label{dfn:star-exact-structure} 
Let $\mathscr{E}_{\star}$ denote the class of all short exact sequences in $[\mathcal{K},\V]_0$ that satisfy the equivalent conditions in \prpref{star}. We call $\mathscr{E}_{\star}$ the \emph{$\star$-pure exact structure} on $[\mathcal{K},\V]_0$ (see the next result). Sequences in $\mathscr{E}_{\star}$ are called \textit{$\star$-pure (short) exact sequences}.
\end{dfn}

\begin{prp}
\label{prp:prp-star} 
The pair $([\mathcal{K},\V]_0,\mathscr{E}_{\star})$ is an exact category.
\end{prp}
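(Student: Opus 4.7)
The strategy is to reduce the claim to a direct application of Proposition \ref{prp:construction-of-exact-structure}. Specifically, I would take $\mathcal{C} = [\mathcal{K},\V]_0$ (which is abelian by the Al Hwaeer--Garkusha result cited just before Lemma \ref{lem:3T}) and let $\mathfrak{T}$ be the collection of weighted-colimit functors
\begin{equation*}
  G \star - \colon [\mathcal{K},\V]_0 \longrightarrow \V_0, \qquad G \in [\mathcal{K}^{\mathrm{op}},\V].
\end{equation*}
With this choice, the initial exact structure $\mathscr{E}_{\mathfrak{T}}$ produced by Proposition \ref{prp:construction-of-exact-structure} coincides, by its very definition, with condition \eqclbl{i} of Proposition \ref{prp:star}, hence with $\mathscr{E}_{\star}$.

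The first step is to check that each $G \star -$ is indeed an additive functor into the abelian category $\V_0$. Additivity follows from the enriched adjunction $G \star - \dashv [G,-]$ from Lemma \ref{lem:3T} (left adjoints between additive categories are additive), and the target $\V_0$ is abelian by Setup \ref{stp:setup-abelian-cosmos-1}. The second step is precisely Lemma \ref{lem:3T}: being a left adjoint, $G \star -$ is right exact on the abelian category $[\mathcal{K},\V]_0$. This verifies all the hypotheses on $\mathfrak{T}$ required by Proposition \ref{prp:construction-of-exact-structure}.

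With these two points in hand, Proposition \ref{prp:construction-of-exact-structure} applies and produces an exact structure $\mathscr{E}_{\mathfrak{T}}$ on $[\mathcal{K},\V]_0$ consisting of all short exact sequences $0 \to F' \to F \to F'' \to 0$ such that $0 \to G \star F' \to G \star F \to G \star F'' \to 0$ is exact in $\V_0$ for every $G$ in $[\mathcal{K}^{\mathrm{op}},\V]$. By Definition \ref{dfn:star-exact-structure} this is exactly $\mathscr{E}_{\star}$, so $([\mathcal{K},\V]_0,\mathscr{E}_{\star})$ is an exact category, as required.

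The only point that deserves a moment's pause is set-theoretic: $[\mathcal{K}^{\mathrm{op}},\V]$ is generally a proper class rather than a set. However, Proposition \ref{prp:construction-of-exact-structure} is formulated for an arbitrary \emph{collection} $\mathfrak{T}$ of functors, with no smallness restriction, and its proof never invokes such a restriction. So no obstacle arises there, and the argument goes through without further modification.
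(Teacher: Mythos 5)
Your proof is correct and takes essentially the same route as the paper: both apply Proposition \ref{prp:construction-of-exact-structure} with $\mathcal{C}=[\mathcal{K},\V]_0$ and $\mathfrak{T}$ the collection of functors $G\star-$ for $G\in[\mathcal{K}^{\mathrm{op}},\V]$, citing Lemma \ref{lem:3T} for right exactness. Your extra remarks on additivity and the set-theoretic status of the collection $\mathfrak{T}$ are sound but go beyond what the paper's one-line proof records.
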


\begin{proof}
  Apply \prpref[Prop.~]{construction-of-exact-structure} with $\mathcal{C} = [\mathcal{K},\V]_0$ and $\mathfrak{T}$ the class of functors $G \star - \colon [\mathcal{K},\V]_0 \to \V_0$ where $G \in [\mathcal{K}^\mathrm{op},\V]$. Note that every functor $G \star -$ is right exact by \lemref{3T}.
\end{proof}  

Recall that a left $R$-module $M$ is \emph{absolutely pure} (or \emph{FP-injective}) if it is a pure submodule of every $R$-module that contains it; see \cite[Dfn.~A.17]{JL}. Equivalently, every short exact sequence $0 \to M \to K \to K' \to 0$ is pure exact, that is, $0 \to X \otimes M \to X \otimes K \to X \otimes K' \to 0$ is exact for every right $R$-module $X$. The definition of absolutely pure $\V$-functors $\mathcal{K} \to \V$ given below is completely analogous to this.

\begin{dfn}
  \label{dfn:abspure}
  Let $\mathcal{K}$ be a small $\V$-category and let $H$ be an object in $[\mathcal{K},\V]_0$. Recall that 
\begin{equation*}
  \textnormal{$H$ is injective}
  \ \ \iff \ \ 
  \left\{\!\!
    \begin{array}{l}
      \textnormal{Every exact sequence $0 \to H \to F \to F' \to 0$}
      \\
      \textnormal{in the abelian category $[\mathcal{K},\V]_0$ is split exact\,.}
    \end{array}
  \right.
\end{equation*}
Inspired by the remarks above, we define:
\begin{equation*}
  \textnormal{$H$ is \emph{absolutely pure}}
  \ \ \iff \ \ 
  \left\{\!\!
    \begin{array}{l}
      \textnormal{Every exact sequence $0 \to H \to F \to F' \to 0$}
      \\
      \textnormal{in the abelian category $[\mathcal{K},\V]_0$ is $\star$-pure exact\,.}
    \end{array}
  \right.
\end{equation*}
We also set:
\begin{align*}
  \Inj{[\mathcal{K},\V]_0} &\,=\, \{H \in [\mathcal{K},\V]_0 \ | \ \textnormal{$H$ is injective} \}
  \quad \textnormal{and}
  \\
  \AbsPure{[\mathcal{K},\V]_0} &\,=\, \{H \in [\mathcal{K},\V]_0 \ | \ \textnormal{$H$ is absolutely pure} \}\;.  
\end{align*}  
\end{dfn}

These categories will appear in \thmref{main3}, the final result of the paper.

\section{The tensor embedding for an abelian cosmos}
\label{sec:tensor-embedding}

We establish some general properties of the tensor embedding defined in \dfnref[]{Def-Psi} below. The main result is \thmref{main}, which shows that the tensor embedding identifies the geometrically pure exact category $(\V_0,\mathscr{E}_\otimes)$ from \prpref{geometricalexactst} with a certain exact subcategory of $([\A,\V]_0,\mathscr{E}_{\star})$ from \prpref{prp-star}.

\begin{stp}
\label{stp:setup-abelian-cosmos-2} The setup for this section is the same as in \stpref[]{setup-abelian-cosmos-1}, i.e~$(\V,\otimes,I,[-,-])$ is an abelian\footnote{\,Note that the abelianness of $\V_0$ is not used, neither is it important for, part \prtlbl{a} in \thmref{main}.} cosmos with an injective cogenerator $E$. 
\end{stp}

\begin{dfn}
\label{dfn:Def-Psi} 
Recall from \cite[\S1.6]{Kelly} that $\otimes$ is a $\V$-functor\footnote{\,Note that in \emph{loc.~cit.} the symbol ``Ten'' is used for this $\V$-functor whereas ``$\otimes$'' is reserved for the ordinary functor $\V_0\times \V_0 \to \V_0$, however, we shall abuse notation and use the latter symbol for both functors.} \mbox{$\V \pmb{\otimes} \V \to \V$}. For a small full $\V$-subcategory $\A$ of $\V$, restriction yields a $\V$-functor \mbox{$\otimes \colon \V \pmb{\otimes} \A \to \V$}. Via the isomorphism 
\begin{equation*}
\V\text{-CAT}(\V \pmb{\otimes} \A,\V) \,\cong\, \V\text{-CAT}(\V,[\A,\V])
\end{equation*}
from \cite[\S2.3 eq.~(2.20)]{Kelly}, the latter $\V$-functor corresponds to the $\V$-functor 
\begin{equation*}
\upTheta \colon \V \longrightarrow [\A,\V]
\qquad \text{given by} \qquad 
X \longmapsto (X\otimes-)|_{\A} \colon \A \to \V\;.
\end{equation*}
We refer to this $\V$-functor as the \emph{tensor embedding}. Note that it induces an additive functor $\upTheta_0 \colon \V_0 \to [\A,\V]_0$ of the underlying abelian categories.
\end{dfn}

\begin{rmk}
If $I \in \A$, then the $\V$-functor $\A(I,-) = [I,-]$ exists and it is clearly naturally isomorphic to the inclusion $\V$-functor, $\mathrm{inc} \colon \A \to \V$. Thus, in the notation of \ref{tensored-cotensored} one has 
\begin{equation}
  \label{eq:eq-Psi-computation}
  \upTheta(X) \,=\, (X\otimes-)|_{\A} \,=\, X \otimes \mathrm{inc} \,\cong\, X \otimes \A(I,-)\;.
\end{equation}
\end{rmk}

For the next result, recall the notions of geometrically pure exact sequences and $\star$-pure exact sequences from \dfnref[Definitions~]{geo-exact-structure} and \dfnref[]{star-exact-structure}.

\begin{lem}
\label{lem:i-ii} 
Let $\A$ be a small full $\V$-subcategory of $\mspace{1.5mu}\V$ and let $\mathbb{S}$ be a short exact sequence in $\V_0$. The following two conditions are equivalent:
\begin{eqc}
\item $\mathbb{S} \otimes A$ is a short exact sequence in $\V_0$ for every $A \in \A$.
\item $\upTheta_0(\mathbb{S})$ is a short exact sequence in $[\A,\V]_0$.
\end{eqc}
If $I$ belongs to $\A$, then the following two conditions are equivalent:
\begin{eqc}
\item[\textnormal{($i^\prime$)}] $\mathbb{S}$ is a geometrically pure exact sequence in $\V_0$.
\item[\textnormal{($ii^\prime$)}] $\upTheta_0(\mathbb{S})$ is a $\star$-pure exact sequence in $[\A,\V]_0$.
\end{eqc}
\end{lem}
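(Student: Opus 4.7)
The plan is to handle the two equivalences separately, exploiting the fact that exactness in the functor category $[\A,\V]_0$ is computed objectwise (as noted just before \lemref{3T}, following \cite[Thm.~4.2]{AG16}) together with the standard behaviour of weighted colimits under left adjoints and the co-Yoneda lemma.

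For the first equivalence \eqclbl{i}$\Leftrightarrow$\eqclbl{ii}, I would simply observe that by objectwise exactness in $[\A,\V]_0$, the sequence $\upTheta_0(\mathbb{S})$ is exact precisely when its evaluation at each $A\in\A$ is exact in $\V_0$; since by definition $\upTheta_0(\mathbb{S})(A)=\mathbb{S}\otimes A$, this is exactly condition \eqclbl{i}. No use of $I\in\A$ is needed here.

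For the second equivalence, assuming $I\in\A$, I would rewrite $\mathscr{E}_\star$-exactness via condition \eqclbl{i} of \prpref{star}: $\upTheta_0(\mathbb{S})$ is $\star$-pure exact iff $G\star\upTheta_0(\mathbb{S})$ is a short exact sequence in $\V_0$ for every $G\in[\A^{\mathrm{op}},\V]$. The key identification is then
\begin{equation*}
G\star\upTheta_0(X)\,\cong\,X\otimes(G\star\mathrm{inc})\qquad\text{naturally in }X\in\V,
\end{equation*}
where $\mathrm{inc}\colon\A\to\V$ is the inclusion. This follows because $\upTheta_0(X)=(X\otimes-)\circ\mathrm{inc}$ and the $\V$-functor $X\otimes-$, being a left adjoint, preserves weighted $\V$-colimits. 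Setting $V_G:=G\star\mathrm{inc}\in\V$, one obtains $G\star\upTheta_0(\mathbb{S})\cong\mathbb{S}\otimes V_G$, so \eqclbl{$i'$} immediately implies \eqclbl{$ii'$}.

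For the converse \eqclbl{$ii'$}$\Rightarrow$\eqclbl{$i'$}, I would, given $V\in\V$, exhibit a specific weight $G_V\in[\A^{\mathrm{op}},\V]$ whose associated weighted colimit produces $\mathbb{S}\otimes V$. The choice is $G_V:=V\otimes\A(-,I)$, the tensor (in the sense of \ref{tensored-cotensored}) of $V$ with the representable $\A(-,I)$, which exists because $I\in\A$. Using the compatibility of tensors with weighted colimits together with the enriched co-Yoneda lemma $\A(-,I)\star F\cong F(I)$, I would compute
\begin{equation*}
G_V\star\upTheta_0(X)\,\cong\,V\otimes\bigl(\A(-,I)\star\upTheta_0(X)\bigr)\,\cong\,V\otimes\upTheta_0(X)(I)\,\cong\,V\otimes(X\otimes I)\,\cong\,X\otimes V,
\end{equation*}
naturally in $X$. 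Applied to $\mathbb{S}$, this gives $G_V\star\upTheta_0(\mathbb{S})\cong \mathbb{S}\otimes V$, and the hypothesis \eqclbl{$ii'$} yields exactness of $\mathbb{S}\otimes V$, establishing \eqclbl{$i'$}.

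The only slightly delicate point is keeping track of the naturality in $\mathbb{S}$ in both identifications, so that an isomorphism of \emph{objects} actually transfers exactness of \emph{sequences}; this is automatic since both sides are obtained by applying additive functors ($X\otimes-$ and $G\star-$) to $\mathbb{S}$. I do not anticipate a genuine obstacle—the argument is essentially an organized application of the enriched co-Yoneda lemma and the preservation of weighted colimits by left-adjoint $\V$-functors.
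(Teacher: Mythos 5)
Your proof is correct and follows essentially the same route as the paper: the first equivalence is objectwise exactness, and the second rests on the identification $G\star\upTheta_0(X)\cong X\otimes(G\star\mathrm{inc})$, where $G\star\mathrm{inc}\cong G(I)$ by the enriched co-Yoneda lemma since $\mathrm{inc}\cong\A(I,-)$ — which is exactly the paper's formula $G\star\upTheta_0(-)\cong-\otimes G(I)$. The only (immaterial) difference is your choice of weight in the converse direction, $G_V=V\otimes\A(-,I)$, where the paper uses $G=[-,V]$; both satisfy $G(I)\cong V$ and hence produce $\mathbb{S}\otimes V$.
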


\begin{proof}
  The equivalence \eqclbl{i}\,$\Leftrightarrow$\,\eqclbl{ii} is evident from the definitions. Now assume that $I \in \A$. For every $\V$-functor $G \colon \A^\mathrm{op} \to \V$ there is an equivalence of endofunctors on $\V_0$,
\begin{equation}
  \label{eq:eq-star-tensor}
  G \mspace{1mu}\star\mspace{1mu} \upTheta_0(-) \,\cong\,  - \otimes G(I)\;.
\end{equation}
Indeed, for $X \in \V$ one has the next isomorphisms, where the $1^\mathrm{st}$ is by \eqref{eq-Psi-computation}, the $2^\mathrm{nd}$ follows as 
the $\V$-functor $X \otimes\, ? \colon \V \to \V$ preserves weighted colimits (this follows from e.g.~\cite[Prop. 6.6.12]{B2}), and the $3^\mathrm{rd}$ is by \cite[eq.~(3.10)]{Kelly}:
\begin{equation*}
  G \mspace{1mu}\star\mspace{1mu} \upTheta(X) \,\cong\, 
  G \mspace{1mu}\star\mspace{1mu} (X \otimes \A(I,-)) \,\cong\, 
  X \otimes (G \mspace{1mu}\star\mspace{1mu} \A(I,-)) \,\cong\,
  X \otimes G(I)\;. 
\end{equation*}  
It is clear from \eqref{eq-star-tensor} that \eqclbl{i$^\prime$} implies \eqclbl{ii$^\prime$}. Conversely, assume \eqclbl{ii$^\prime$} and let $V \in \V$ be given. As $G=[-,V]$ is a $\V$-functor $\A^\mathrm{op} \to \V$, the sequence \mbox{$[-,V] \star \upTheta (\mathbb{S})$} is exact by assumption. Another application of \eqref{eq-star-tensor} shows that $\mathbb{S} \otimes [I,V] \cong \mathbb{S} \otimes V$ is exact, so \eqclbl{i$^\prime$} holds.
\end{proof}

Recall that the \emph{essential image} of a $\V$-functor $T \colon \mathcal{C} \to \mathcal{D}$, denoted by $\operatorname{Ess.Im}T$, is just the essential image of the underlying ordinary functor $T_0 \colon \mathcal{C}_0 \to \mathcal{D}_0$. Thus $\operatorname{Ess.Im}T$ is the collection of all objects $D \in \mathcal{D}$ such that $D \cong T(C)$ in $\mathcal{D}_0$ for some object $C \in \mathcal{C}$. We may consider $\operatorname{Ess.Im}T$ as a full $\V$-subcategory of $\mathcal{D}$ or as a full subcategory of $\mathcal{D}_0$.

\begin{lem} 
\label{lem:induced-exact} 
Let $\A$ be a small full $\V$-subcategory of $\mspace{1.5mu}\V$ with $I \in \A$. For any short exact sequence $0 \to F' \to F \to F'' \to 0$ in the abelian category $[\A,\V]_0$ one has:
\begin{equation*}
  F',F'' \in \operatorname{Ess.Im}\upTheta \quad \Longrightarrow \quad
  F \in \operatorname{Ess.Im}\upTheta\;.
\end{equation*}
Consequently, $\operatorname{Ess.Im}\upTheta$ is an extension-closed subcategory of both of the exact categories
\begin{equation*}
  ([\A,\V]_0,\mathscr{E}_\mathrm{ab})
  \qquad \textnormal{and} \qquad
  ([\A,\V]_0,\mathscr{E}_\star)\;,
\end{equation*}
where $\mathscr{E}_\mathrm{ab}$ is the exact strucure induced by the abelian structure, and $\mathscr{E}_\star$ is the (coarser) exact structure from \prpref{prp-star}. It follows that the sequences in $\operatorname{Ess.Im}\upTheta$ which are exact, respectively, $\star$-pure exact, in $[\A,\V]_0$ form an exact structure on $\operatorname{Ess.Im}\upTheta$, which we denote by $\mathscr{E}_\mathrm{ab}|_{\operatorname{Ess.Im}\upTheta}$, respectively, $\mathscr{E}_\star|_{\operatorname{Ess.Im}\upTheta}$. In this way, we obtain exact categories:
\begin{equation*}
  (\operatorname{Ess.Im}\upTheta,\mspace{1mu} \mathscr{E}_\mathrm{ab}|_{\operatorname{Ess.Im}\upTheta})
  \qquad \textnormal{and} \qquad
  (\operatorname{Ess.Im}\upTheta,\mspace{1mu} \mathscr{E}_\star|_{\operatorname{Ess.Im}\upTheta})\;.
\end{equation*}
\end{lem}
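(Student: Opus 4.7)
The plan is to prove the implication by showing that whenever $F',F''\in\operatorname{Ess.Im}\upTheta$, one has $F\cong\upTheta(F(I))$. Writing (via chosen isomorphisms) $F'=\upTheta(X')$ and $F''=\upTheta(X'')$, the natural candidate for the preimage of $F$ is $X:=F(I)$, which makes sense because $I\in\A$. I will construct a natural comparison map $\varepsilon_F\colon\upTheta(X)\to F$ and show, pointwise, that it is an isomorphism by a five-lemma argument.

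The first ingredient I would set up is a $\V$-adjunction $\upTheta\dashv\mathrm{ev}_I$, where $\mathrm{ev}_I\colon[\A,\V]\to\V$ sends $G\mapsto G(I)$. This drops out of three inputs already in the excerpt: the identification $\upTheta(X)\cong X\otimes\A(I,-)$ from \eqref{eq-Psi-computation}, the tensor-cotensor adjunction \eqref{eq-tensored-cotensored}, and the enriched Yoneda isomorphism $[\A,\V](\A(I,-),G)\cong G(I)$, which together give $[\A,\V](\upTheta(X),G)\cong[X,G(I)]$. The unit $Y\to\upTheta(Y)(I)=Y\otimes I$ is the unitor iso, so a triangle-identity argument shows the counit $\varepsilon_G\colon\upTheta(G(I))\to G$ is an iso whenever $G\in\operatorname{Ess.Im}\upTheta$.

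Next, evaluating the given sequence at $I$ produces a short exact sequence $0\to X'\to X\to X''\to 0$ in $\V_0$ (using that (co)kernels in $[\A,\V]_0$ are computed objectwise, cf.~\cite{AG16}). As a left adjoint, $\upTheta$ is right exact, so $\upTheta(X')\to\upTheta(X)\to\upTheta(X'')\to 0$ is exact in $[\A,\V]_0$; $\V$-naturality of $\varepsilon$ stacks this above the given sequence in a commutative diagram whose outer vertical maps $\varepsilon_{F'}$ and $\varepsilon_{F''}$ are iso. Evaluating at an arbitrary $A\in\A$, commutativity of the leftmost square together with the injectivity of $F'(A)\hookrightarrow F(A)$ forces $X'\otimes A\to X\otimes A$ to be monic; both rows at $A$ are then short exact, and the 5-lemma gives that $\varepsilon_F(A)$ is an iso. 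Since this holds at every $A$, the $\V$-natural transformation $\varepsilon_F$ is an iso of $\V$-functors, proving $F\cong\upTheta(X)\in\operatorname{Ess.Im}\upTheta$.

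The ``consequently'' part is then formal: the extension-closure of $\operatorname{Ess.Im}\upTheta$ in the abelian category $[\A,\V]_0$ (hence in $([\A,\V]_0,\mathscr{E}_\mathrm{ab})$) is precisely what was just proved, and since $\mathscr{E}_\star\subseteq\mathscr{E}_\mathrm{ab}$, the same property holds in the coarser $([\A,\V]_0,\mathscr{E}_\star)$. The standard fact that an extension-closed subcategory of an exact category inherits an exact structure by restriction (see \cite{Buhler}) then delivers both claimed exact categories. The main obstacle I expect is the enriched bookkeeping: one must verify $\V$-naturality of the counit and check that the pointwise 5-lemma isomorphism actually promotes to an isomorphism of $\V$-functors, both routine but needing some care given that the objects in sight are enriched functors rather than ordinary ones.
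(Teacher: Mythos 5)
Your proposal is correct and follows essentially the same route as the paper: both set up the adjunction $\upTheta_0\dashv(\mathrm{Ev}_I)_0$ via \eqref{adj}, observe that the counit $\theta_F\colon\upTheta_0(F(I))\to F$ is invertible exactly on $\operatorname{Ess.Im}\upTheta$, evaluate the given sequence at $I$, use right exactness of $\upTheta_0$ to build the comparison diagram, and conclude by the Five Lemma, with the final assertion delegated to B\"uhler. Your extra pointwise check that $X'\otimes A\to X\otimes A$ is monic is harmless but unnecessary, since the Five Lemma variant with only a right-exact top row already applies.
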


\begin{proof}
  For all objects $X \in \V$ and $F \in [\A,\V]$ there are isomorphisms:
\begin{equation}
  \label{eq:adj}
  [\A,\V](\upTheta(X),F) \,\cong\,
  [\A,\V](X \otimes \A(I,-),F) \,\cong\, 
  [X, [\A,\V](\A(I,-),F)] \,\cong\,
  [X,F(I)]
\end{equation} 
which follow from \eqref{eq-Psi-computation}, \eqref{eq-tensored-cotensored}, and the strong Yoneda Lemma \cite[\S2.4 eq.~(2.31)]{Kelly}. Consequently there is also an isomorphism
$[\A,\V]_0(\upTheta(X),F) \cong \V_0(X,F(I))$, which shows that there is a pair of adjoint functors $(\upTheta_0,(\mathrm{Ev}_I)_0)$ where $\mathrm{Ev}_I$ is the $\V$-functor given by evaluation at the unit object $I$ (see \cite[\S2.2]{Kelly}):
\begin{equation*}
\xymatrix@C=5pc{
  \V_0
  \ar@<0.6ex>[r]^-{\upTheta_0}
  &
  [\A,\V]_0\;.\mspace{-8mu}
  \ar@<0.6ex>[l]^-{(\mathrm{Ev}_I)_0}
}
\end{equation*}
Write $\theta$ for the counit of this adjunction; thus for $F \in [\A,\V]_0$ we have the $\V$-natural transformation $\theta_F \colon \upTheta_0(F(I)) \to F$. Clearly, $\theta_F$ is an isomorphism if and only if $F \in \operatorname{Ess.Im}\upTheta$.

Now, let $0 \to F' \to F \to F'' \to 0$ be an exact sequence in $[\A,\V]_0$. It induces an exact sequence $0 \to F'(I) \to F(I) \to F''(I) \to 0$ in $\V_0$ and hence the exact sequence in the upper row of the next commutative diagram (by right exactness of the tensor product),
\begin{equation*}
  \xymatrix{
    {} & 
    \upTheta_0(F'(I)) \ar[r] \ar[d]^-{\theta_{F'}} &
    \upTheta_0(F(I)) \ar[r] \ar[d]^-{\theta_{F}} &
    \upTheta_0(F''(I)) \ar[r] \ar[d]^-{\theta_{F''}} &
    0
    \\
    0 \ar[r] & F' \ar[r] & F \ar[r] & F'' \ar[r] & 0\;.\mspace{-8mu}
  }
\end{equation*}
If $F'$ and $F''$ are in $\operatorname{Ess.Im}\upTheta$, then $\theta_{F'}$ and $\theta_{F''}$ are  isomorphisms in $[\A,\V]_0$; whence $\theta_F$ is an isomorphism by the Five Lemma, so $F$ belongs to $\operatorname{Ess.Im}\upTheta$.

The last assertion follows directly from \cite[Lem.~10.20]{Buhler} (also note that $\operatorname{Ess.Im}\upTheta$ is an additive subcategory of $[\A,\V]_0$ since the functor $\upTheta_0$ is additive).
\end{proof}

\begin{thm}
\label{thm:main} 
Let $\V$ be as in \stpref{setup-abelian-cosmos-2} and let $\A$ be any small full $\V$-subcategory of $\V$. The tensor embedding 
\begin{equation*}
\upTheta \colon \V \longrightarrow [\A,\V]
\qquad \text{given by} \qquad 
X \longmapsto (X\otimes-)|_{\A}\;.
\end{equation*}
from \dfnref{Def-Psi} is cocontinuous, that is, it preserves all small weighted colimits. If $I$ belongs to $\A$, then $\upTheta$ is a fully faithful and it induces two equivalences:
\begin{prt}
\item An equivalence of\, $\V$-categories, $\upTheta \colon \V \stackrel{\simeq}{\longrightarrow} \operatorname{Ess.Im}\upTheta$.

\item An equivalence of exact categories, $\upTheta_0 \colon (\V_0,\mathscr{E}_\otimes) \stackrel{\simeq}{\longrightarrow} (\operatorname{Ess.Im}\upTheta,\mathscr{E}_\star|_{\operatorname{Ess.Im}\upTheta})$.
\end{prt}
\end{thm}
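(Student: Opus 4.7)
The plan is to dispatch the three claims in order, leveraging the adjunction $(\upTheta_0, (\mathrm{Ev}_I)_0)$ of \lemref{induced-exact} and the exactness translation of \lemref{i-ii}. \emph{Cocontinuity.} Weighted colimits in $[\A,\V]$ are formed objectwise (see \cite[\S3.3]{Kelly}), so for a weight $G \colon \mathcal{K}^{\mathrm{op}} \to \V$ and a diagram $F \colon \mathcal{K} \to \V$, I would evaluate at $A \in \A$:
\[
\upTheta(G \star F)(A) \;=\; (G \star F) \otimes A \quad\text{and}\quad (G \star \upTheta F)(A) \;=\; G \star (F(-) \otimes A).
\]
Since $- \otimes A$ is left $\V$-adjoint to $[A,-]$, it preserves weighted colimits by \cite[Prop.~6.6.12]{B2}, and the two expressions agree $\V$-naturally in $A$. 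Hence $\upTheta(G \star F) \cong G \star \upTheta F$.

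\emph{Full faithfulness and (a).} Assume $I \in \A$. Specialising the chain \eqref{adj} to $F = \upTheta(Y)$ yields a $\V$-natural isomorphism
\[
[\A,\V](\upTheta X, \upTheta Y) \;\cong\; [X,\upTheta(Y)(I)] \;=\; [X, Y \otimes I] \;\cong\; [X,Y] \;=\; \V(X,Y).
\]
By unwinding the string of isomorphisms I would verify that this is inverse to the morphism $\V(X,Y) \to [\A,\V](\upTheta X, \upTheta Y)$ induced by $\upTheta$ on hom-objects: a morphism $f \colon X \to Y$ is sent by $\upTheta$ to the $\V$-natural transformation $(f \otimes -)|_\A$, whose $I$-component $f \otimes I \cong f$ is precisely the preimage under \eqref{adj}. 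Hence $\upTheta$ is fully faithful as a $\V$-functor, and (a)---the equivalence $\V \simeq \operatorname{Ess.Im}\upTheta$ of $\V$-categories---is then formal.

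\emph{Part (b).} By (a), $\upTheta_0$ is already an equivalence of the underlying ordinary categories, so it remains to match the admissible short exact sequences on each side. If $\mathbb{S} \in \mathscr{E}_\otimes$, then \lemref{i-ii}\,($i'$)$\Rightarrow$\,($ii'$) shows that $\upTheta_0(\mathbb{S})$ is $\star$-pure exact in $[\A,\V]_0$; since it takes values in $\operatorname{Ess.Im}\upTheta$, it belongs to $\mathscr{E}_\star|_{\operatorname{Ess.Im}\upTheta}$. Conversely, any sequence in $\mathscr{E}_\star|_{\operatorname{Ess.Im}\upTheta}$ is, by full faithfulness, isomorphic to $\upTheta_0(\mathbb{S})$ for some sequence $\mathbb{S} \colon 0 \to X \to Y \to Z \to 0$ in $\V_0$. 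Evaluating the exactness of $\upTheta_0(\mathbb{S})$ in $[\A,\V]_0$ at the object $I \in \A$ and using the iso $\mathbb{S} \otimes I \cong \mathbb{S}$ shows that $\mathbb{S}$ is itself short exact in $\V_0$, and \lemref{i-ii}\,($ii'$)$\Rightarrow$\,($i'$) then places $\mathbb{S}$ in $\mathscr{E}_\otimes$. The only step requiring real care is the hom-object computation in the fully-faithful argument; all other points reduce to mechanical applications of \lemref{i-ii}.
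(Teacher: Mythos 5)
Your proposal is correct and follows essentially the same route as the paper: cocontinuity via objectwise computation of weighted colimits in $[\A,\V]$ and preservation of weighted colimits by $-\otimes A$, full faithfulness by identifying $\upTheta_{XY}$ with the composite coming from the adjunction isomorphism \eqref{adj} evaluated at $F=\upTheta(Y)$, and part (b) from the equivalence \eqclbl{i$^\prime$}\,$\Leftrightarrow$\,\eqclbl{ii$^\prime$} of \lemref{i-ii} together with \lemref{induced-exact}. Your part (b) merely spells out the converse direction (evaluating at $I$ to see that the preimage sequence is short exact) which the paper leaves implicit.
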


\begin{proof}
First note that the $\V$-categories $\V$ and $[\A,\V]$ are bicomplete, that is, they have small weighted limits and colimits. This follows from \cite[\S3.1 and \S3.3]{Kelly} as the ordinary category $\V_0$ is assumed to be bicomplete. We now show that $\upTheta$ is cocontinuous. 

Let $\mathcal{K}$ be a small $\V$-category and let $G \colon \mathcal{K}^{\mathrm{op}} \to \V$ and $F \colon \mathcal{K} \to \V$ be $\V$-functors. We must show that $G \star (\upTheta \circ F) \cong \upTheta(G \star F)$, where the weighted colimit on the left-hand side is computed in $[\A,\V]$ and the one on the right-hand side in $\V$. Via the isomorphism 
\begin{equation*}
  \V\text{-CAT}(\mathcal{K},[\A,\V]) \,\cong\, \V\text{-CAT}(\mathcal{K} \pmb{\otimes} \A,\V)
\end{equation*}  
from \cite[\S2.3 eq.~(2.20)]{Kelly}, the $\V$-functor $\upTheta \circ F \colon \mathcal{K} \to [\A,\V]$ corresponds to the $\V$-functor $P \colon \mathcal{K} \pmb{\otimes} \A \to \V$ given by $P(K,A) = F(K) \otimes A$. So $P(-,A)$ is $F \otimes A$ in the notation of \ref{tensored-cotensored}. In the computation below, the $1^{\mathrm{st}}$ isomorphism holds as weighted colimits in $[\A,\V]$ are computed objectwise, see \cite[\S3.3]{Kelly} (more precisely, we use the weighted colimit counterpart of eq.~(3.16) in \emph{loc.~cit.}); the $2^{\mathrm{nd}}$ and $3^{\mathrm{rd}}$ isomorphisms hold by the definitions of $P$ and $\upTheta$; the $4^{\mathrm{th}}$ isomorphism holds as the $\V$-functor $- \otimes A \colon \V \to \V$ preserves weighted~colimits:
\begin{equation*}
  (G \star (\upTheta \circ F))(A) 
  \,\cong\,
  G \star P(-,A)
  \,\cong\, 
  G \star (F \otimes A)
  \,\cong\, 
  (G \star F) \otimes A 
  \,\cong\, 
  \upTheta(G \star F)(A)\;.
\end{equation*}
Consequently, $G \star (\upTheta \circ F) \cong \upTheta(G \star F)$, as claimed. 

Now assume that $I \in \A$. As $\upTheta$ is a $\V$-functor it comes equipped with a natural morphism,
\begin{equation*}
  \upTheta_{XY} \colon [X,Y] \longrightarrow [\A,\V](\upTheta(X),\upTheta(Y))\;,
\end{equation*}
for every pair of objects $X,Y \in \V$. The claim is that $\upTheta_{XY}$ is an isomorphism in $\V$. However, the morphism $\upTheta_{XY}$ is precisely the following composite, where the second isomorphism comes from \eqref{adj} with $F=\upTheta(Y)$,
\begin{equation*}
  [X,Y] \,\cong\, [X,\upTheta(Y)(I)] \,\cong\, [\A,\V](\upTheta(X),\upTheta(Y))\;.
\end{equation*} 

\proofoftag{a} The asserted $\V$-equivalence is a formal consequence of fact that $\upTheta$ is fully faithful; see \cite[\S1.11 p.\,24]{Kelly}. 

\proofoftag{b} By part \prtlbl{a} we have an equivalence of additive categories \smash{$\upTheta_0 \colon \V_0 \stackrel{\simeq}{\longrightarrow} \operatorname{Ess.Im}\upTheta$}. The assertion about exact categories now follows from the equivalence \eqclbl{i$^\prime$}\,$\Leftrightarrow$\,\eqclbl{ii$^\prime$} in \lemref{i-ii} and from \lemref{induced-exact}.
\end{proof}

We end this section with two results that show how to construct a (co)generating set of objects in the category $[\mathcal{K},\V]_0$ of $\V$-functors from a (co)generating set of objects in $\V_0$. This will be used in the proof of \prpref{injective}.

\begin{lem}
\label{lem:c1}
Let $\mathcal{K}$ be a small $\V$-category. The following hold.
\begin{prt}
\item If $\mathcal{S}$ is a cogenerating set of objects in $\V_0$, then \mbox{$\{\,[\mathcal{K}(-,K),S]\,\}_{K\in \mathcal{K},\,S\in \mathcal{S}}$} is a cogenerating set of objects in $[\mathcal{K},\V]_0$.
\item If $S \in \V_0$ is injective, then $[\mathcal{K}(-,K),S]$ is injective in $[\mathcal{K},\V]_0$ for every $K \in \mathcal{K}$. 
\end{prt}
\end{lem}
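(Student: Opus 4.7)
The plan is to establish a single master natural isomorphism
\begin{equation*}
[\mathcal{K},\V]_0(F,[\mathcal{K}(-,K),S]) \,\cong\, \V_0(F(K),S)
\end{equation*}
for every $F \in [\mathcal{K},\V]$, $K \in \mathcal{K}$, and $S \in \V$, and then deduce both parts from it almost formally. To prove this isomorphism, I would combine the end formula $[\mathcal{K},\V](F,G) \cong \int_{L}\V(F(L),G(L))$ for hom-objects in enriched functor categories (\cite[\S2.2]{Kelly}), the tensor-hom adjunction $(-\otimes\mathcal{K}(L,K))\dashv[\mathcal{K}(L,K),-]$ in $\V$, and the enriched co-Yoneda lemma $\int^{L}F(L)\otimes\mathcal{K}(L,K) \cong F(K)$. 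Schematically:
\begin{equation*}
  [\mathcal{K},\V](F,[\mathcal{K}(-,K),S]) \,\cong\, \int_{L}[F(L)\otimes\mathcal{K}(L,K),S] \,\cong\, \left[\int^{L}\!F(L)\otimes\mathcal{K}(L,K),\,S\right] \,\cong\, [F(K),S]\;,
\end{equation*}
and passing to underlying sets gives the desired bijection, natural in $F$ and $S$. Note that these manipulations are of the same flavour as the co-Yoneda computation already used in the proof of \lemref{i-ii}, and they are available because $\V$ is a cosmos.

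For part \prtlbl{a}, given a nonzero morphism $f \colon F \to G$ in $[\mathcal{K},\V]_0$, the fact that kernels and cokernels in $[\mathcal{K},\V]_0$ are formed pointwise (noted just before \lemref{3T}) provides some $K \in \mathcal{K}$ with $f_K \colon F(K) \to G(K)$ nonzero in $\V_0$. Since $\mathcal{S}$ cogenerates $\V_0$, there exist $S \in \mathcal{S}$ and $g \colon G(K) \to S$ with $g\circ f_K \neq 0$. The master isomorphism turns $g$ into some $\widetilde{g} \colon G \to [\mathcal{K}(-,K),S]$, and the naturality of the isomorphism in its first argument implies that $\widetilde{g} \circ f$ corresponds to $g \circ f_K \neq 0$; hence $\widetilde{g}\circ f \neq 0$, which is exactly the cogeneration property.

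For part \prtlbl{b}, the master isomorphism says that the contravariant functor $[\mathcal{K},\V]_0(-,[\mathcal{K}(-,K),S])$ factors (up to natural iso) as $\V_0(-,S) \circ (\mathrm{Ev}_K)_0$, where $\mathrm{Ev}_K$ is the $\V$-functor given by evaluation at $K$. Since (co)limits in $[\mathcal{K},\V]_0$ are computed objectwise, $(\mathrm{Ev}_K)_0$ is exact; and when $S$ is injective in $\V_0$, the functor $\V_0(-,S)$ is exact. Their composite is therefore exact, so $[\mathcal{K}(-,K),S]$ is injective in $[\mathcal{K},\V]_0$. The only non-formal step is the master isomorphism itself, so that is where the (modest) work lies; everything else is routine bookkeeping using the pointwise computation of (co)limits in functor categories.
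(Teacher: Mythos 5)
Your proposal is correct and follows essentially the same route as the paper: both hinge on the single natural isomorphism $[\mathcal{K},\V]_0(F,[\mathcal{K}(-,K),S])\cong\V_0(F(K),S)$, which the paper obtains from the defining property of weighted colimits together with the identification $\mathcal{K}(-,K)\star F\cong F(K)$ (Kelly's eq.~(3.10)), i.e.\ exactly your coend/co-Yoneda computation in different notation. Your explicit derivations of \prtlbl{a} and \prtlbl{b} from this isomorphism are the standard arguments the paper compresses into ``which yields both assertions,'' and they are sound.
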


\begin{proof}
For $K$ in $\mathcal{K}$, the functor  $[\mathcal{K},\V]_0 \to \V_0$ given by \mbox{$? \,\mapsto\ \mathcal{K}(-,K) \,\star\, ?$} is just the evaluation functor $\mathrm{Ev}_K(?)$ at $K$, see \cite[\S3.1 eq.~(3.10)]{Kelly}. This fact, the defining property \eqref{eq-wcolim} 
of weighted colimits, and \cite[\S3.1 eq.~(3.9)]{Kelly} yield isomorphisms in $\V$,
\begin{equation*}
[\mathcal{K},\V](?\,,[\mathcal{K}(-,K),S])
\,\cong\,
[\mathcal{K}(-,K) \,\star\, ?\, ,S]
\,\cong\,
[\mathrm{Ev}_K(?),S]\;.
\end{equation*}
In particular, $[\mathcal{K},\V]_0(?,[\mathcal{K}(-,K),S]) \cong \V_0(\mathrm{Ev}_K(?),S)$, which yields both assertions.
\end{proof}

\lemref{c1} has the next dual of which part \prtlbl{a} can also be found in \cite[Thm.~4.2]{AG16}.

\begin{lem}
Let $\mathcal{K}$ be a small $\V$-category. The following hold.
\begin{prt}
\item If $\mathcal{S}$ is a generating set of objects in $\V_0$, then $\{S \otimes  \mathcal{K}(K,-)\}_{K \in \mathcal{K},\,S \in \mathcal{S}}$ is a generating set of objects in $[\mathcal{K},\V]_0$.
\item If $S \in \V_0$ is projective, then $S \otimes  \mathcal{K}(K,-)$ is projective in $[\mathcal{K},\V]$ for every $K \in \mathcal{K}$.
\end{prt}
\end{lem}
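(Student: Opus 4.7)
The plan is to dualize the proof of \lemref{c1} by replacing the cotensored structure $[V,F]$ with the tensored structure $V \otimes F$, and using the covariant rather than contravariant form of the strong Yoneda Lemma.

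First I will establish the key adjunction. For any $K \in \mathcal{K}$, $S \in \V$, and $F \in [\mathcal{K},\V]$, I want a $\V$-natural isomorphism
\begin{equation*}
[\mathcal{K},\V](S \otimes \mathcal{K}(K,-), F) \,\cong\, [S, F(K)]\;.
\end{equation*}
This follows by composing the first isomorphism of \eqref{eq:eq-tensored-cotensored} from \ref{tensored-cotensored} (applied with $V = S$) with the (covariant) strong Yoneda isomorphism $[\mathcal{K},\V](\mathcal{K}(K,-), F) \cong F(K)$ from \cite[\S2.4 eq.~(2.31)]{Kelly}; namely
\begin{equation*}
[\mathcal{K},\V](S \otimes \mathcal{K}(K,-), F) \,\cong\, [S, [\mathcal{K},\V](\mathcal{K}(K,-),F)] \,\cong\, [S, F(K)]\;.
\end{equation*}
Passing to underlying categories gives $[\mathcal{K},\V]_0(S \otimes \mathcal{K}(K,-), F) \cong \V_0(S, \mathrm{Ev}_K(F))$, which exhibits $S \otimes \mathcal{K}(K,-)$ as left adjoint material to the evaluation functor $\mathrm{Ev}_K \colon [\mathcal{K},\V]_0 \to \V_0$.

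For part \prtlbl{a}, I would use that $[\mathcal{K},\V]_0$ has (co)kernels computed objectwise, so a morphism $\varphi \colon F \to F'$ in $[\mathcal{K},\V]_0$ is zero if and only if $\varphi_K = \mathrm{Ev}_K(\varphi)$ is zero in $\V_0$ for every $K \in \mathcal{K}$. Given a nonzero $\varphi$, pick $K$ with $\varphi_K \neq 0$; since $\mathcal{S}$ generates $\V_0$, there exist $S \in \mathcal{S}$ and $f \colon S \to F(K)$ with $\varphi_K \circ f \neq 0$; by the adjunction above, $f$ transposes to $\tilde f \colon S \otimes \mathcal{K}(K,-) \to F$ whose composite with $\varphi$ transposes to $\varphi_K \circ f \neq 0$, hence $\varphi \circ \tilde f \neq 0$. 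This shows the family jointly detects nonzero morphisms, i.e.\ is a generating set.

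For part \prtlbl{b}, I note that $\mathrm{Ev}_K$ is exact (since all (co)limits, in particular epimorphisms, in $[\mathcal{K},\V]_0$ are computed objectwise, as recalled just before \lemref{3T}). If $S$ is projective in $\V_0$ and $F \twoheadrightarrow F'$ is epic in $[\mathcal{K},\V]_0$, then $\mathrm{Ev}_K(F) \twoheadrightarrow \mathrm{Ev}_K(F')$ is epic in $\V_0$, so $\V_0(S, F(K)) \to \V_0(S, F'(K))$ is surjective, and the adjoint isomorphism transports this to surjectivity of $[\mathcal{K},\V]_0(S \otimes \mathcal{K}(K,-), F) \to [\mathcal{K},\V]_0(S \otimes \mathcal{K}(K,-), F')$. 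I do not expect any real obstacle here; the only point requiring a little care is to ensure that the tensor $S \otimes \mathcal{K}(K,-)$ of \ref{tensored-cotensored} is genuinely the functor $A \mapsto S \otimes \mathcal{K}(K,A)$ so that the strong Yoneda Lemma applies in the form needed above, but this is precisely the description given at the end of \ref{tensored-cotensored}.
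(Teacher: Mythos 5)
Your proposal is correct and follows exactly the paper's own argument: the same composite isomorphism $[\mathcal{K},\V](S \otimes \mathcal{K}(K,-),?) \cong [S,[\mathcal{K},\V](\mathcal{K}(K,-),?)] \cong [S,\mathrm{Ev}_K(?)]$ via the tensored structure and the strong Yoneda Lemma, passed to underlying categories. The paper simply leaves the deduction of both assertions from this adjunction implicit, whereas you spell out the routine details.
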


\begin{proof}
For any objects $S \in \V$ and $K \in \mathcal{K}$, the first iso\-mor\-phism below follows from \eqref{eq-tensored-cotensored} and the second follows from the strong Yoneda Lemma \cite[\S2.4 eq.~(2.31)]{Kelly}:
\begin{equation*}
[\mathcal{K},\V](S \otimes  \mathcal{K}(K,-),?)
\,\cong\,
[S,[\mathcal{K},\V](\mathcal{K}(K,-),?)]
\,\cong\,
[S,\mathrm{Ev}_K(?)]\;.
\end{equation*}
In particular, $[\mathcal{K},\V]_0(S \otimes  \mathcal{K}(K,-),?) \cong \V_0(S,\mathrm{Ev}_K(?))$, which yields both assertions.
\end{proof}

\section{The tensor embedding for a Grothendieck cosmos}
\label{sec:Grothendieck-cosmos}

In this section, we work with \stpref{Setup-cosmos-Grothendieck} below and we consider the tensor embedding 
\begin{equation}
  \label{eq:tenemb}
  \upTheta \colon \V \longrightarrow [\lPres{\V},\V]
\end{equation}
from \dfnref[Definition~]{Def-Psi} in the special case where $\A = \lPres{\V}$. The goal is to strengthen and make \thmref{main} more explicit in this situation; this is achieved in \thmref{main2} below. Note that the examples found in \exaref[]{Ch} and \exaref[]{QcohX} all satisfy the following setup.

\begin{stp}
\label{stp:Setup-cosmos-Grothendieck} 
In this section, $(\V,\otimes,I,[-,-])$ is a cosmos and $\V$ is a Grothendieck category. 
\begin{itemlist}
\item We fix a regular cardinal $\lambda$ such that $\V$ is a locally $\lambda$-presentable base\footnote{\,Thus the blanket setup at the end of the Introduction in \cite{BQR98} is satisfied, and we can apply the theory herein.} in the sense of \ref{lpb}; such a choice is possible by \prpref{Grothendieck-lpb} below. 
\item We let $\lPres{\V}$ be the (small) collection of all $\lambda$-presentable objects in $\V$.
\item We fix an injective cogenerator $E$ in $\V_0$; existence is guaranteed by \cite[Thm.~9.6.3]{KS06}.
\end{itemlist}
\end{stp}

\begin{prp}
\label{prp:Grothendieck-lpb} 
There is a regular cardinal $\lambda$ for which $\V$ is a locally $\lambda$-presentable base.
\end{prp}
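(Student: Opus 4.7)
The plan is to leverage the fact that any Grothendieck category is locally presentable---see e.g.~\cite{AR}---so some regular cardinal $\kappa_0$ already makes $\V$ locally $\kappa_0$-presentable. What remains is to pass to a suitably larger regular cardinal $\lambda$ that additionally forces the unit $I$ to be $\lambda$-presentable and makes $\lPres{\V}$ closed under $\otimes$, thereby securing conditions \rqmlbl{2} and \rqmlbl{3} of \ref{lpb}.

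To produce $\lambda$, I would fix a set $\mathcal{S}$ of representatives of the isomorphism classes of $\kappa_0$-presentable objects (a set, since $\V$ is locally $\kappa_0$-presentable). For each pair $(A,B)\in\mathcal{S}\times\mathcal{S}$, the object $A\otimes B$ is $\mu(A,B)$-presentable for some regular cardinal $\mu(A,B)$, and $I$ is $\mu_0$-presentable for some regular $\mu_0$. Because $\mathcal{S}\times\mathcal{S}$ is a set, I can choose a regular cardinal $\lambda\geq\kappa_0$ that strictly majorises $\mu_0$ and every $\mu(A,B)$, and which in addition satisfies the sharpness relation $\kappa_0\triangleleft\lambda$ of Ad\'amek--Rosick\'y; arbitrarily large such $\lambda$ exist by the standard results of \cite{AR}. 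With this choice, conditions \rqmlbl{1} and \rqmlbl{2} of \ref{lpb} are immediate. For the closure condition \rqmlbl{3}, the sharpness hypothesis ensures that every $\lambda$-presentable object of $\V$ is the colimit of a $\lambda$-small diagram of $\kappa_0$-presentable ones. Given $\lambda$-presentable $A,B\in\V$, I write $A\cong\colim_{i\in\mathcal{I}}A_i$ and $B\cong\colim_{j\in\mathcal{J}}B_j$ with $A_i,B_j\in\mathcal{S}$ and $\mathcal{I},\mathcal{J}$ $\lambda$-small, and since the closed monoidal structure makes $-\otimes-$ cocontinuous in each variable,
\begin{equation*}
  A\otimes B \,\cong\, \colim_{(i,j)\in\mathcal{I}\times\mathcal{J}}\,A_i\otimes B_j\;,
\end{equation*}
a $\lambda$-small colimit whose entries are $\lambda$-presentable by the very choice of $\lambda$; hence $A\otimes B$ is $\lambda$-presentable.

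The only step demanding genuine attention, beyond routine cardinal bookkeeping, is the invocation of the sharpness relation $\kappa_0\triangleleft\lambda$: without it one would control only tensor products of the representatives in $\mathcal{S}$, and not those of arbitrary $\lambda$-presentable objects, which is exactly what condition \rqmlbl{3} requires. Everything else is either formal (preservation of colimits by $\otimes$) or a direct appeal to the general theory of locally presentable categories.
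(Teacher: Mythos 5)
Your overall strategy is the same as the paper's: start from local $\kappa_0$-presentability of the Grothendieck category $\V_0$, enlarge the cardinal so that $I$ and the tensor products of $\kappa_0$-presentable representatives become $\lambda$-presentable, and then deduce closure of $\lPres{\V}$ under $\otimes$ from a decomposition of arbitrary $\lambda$-presentable objects in terms of $\kappa_0$-presentable ones. Conditions \rqmlbl{1} and \rqmlbl{2} of \ref{lpb} are handled correctly. The gap is in the step you yourself single out as the crucial one: the claim that, under $\kappa_0 \vartriangleleft \lambda$, \emph{every} $\lambda$-presentable object \emph{is} the colimit of a $\lambda$-small diagram of $\kappa_0$-presentable objects. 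This is essentially the assertion of \cite[Rem.~1.30(2)]{AR}, which the authors of the paper deliberately avoid because its correctness is in doubt (they cite a MathOverflow discussion to this effect in a footnote to this very proof). What the sharpness relation actually guarantees, via \cite[Rem.~2.15]{AR}, is only that every $\lambda$-presentable object is a \emph{direct summand} (retract) of a $\lambda$-small directed colimit of $\kappa_0$-presentable objects; the retract cannot simply be discarded, since the splitting idempotent need not be induced by an endomorphism of the diagram.

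The repair is short and is exactly what the paper does: write $A \oplus A' \cong \colim_{i} A_i$ and $B \oplus B' \cong \colim_{j} B_j$ with $A_i, B_j$ $\kappa_0$-presentable and the index categories $\lambda$-small; your colimit computation then shows that $(A \oplus A') \otimes (B \oplus B') \cong \colim_{(i,j)} (A_i \otimes B_j)$ is $\lambda$-presentable by \cite[Prop.~1.16]{AR}, and $A \otimes B$, being a direct summand of this object, is $\lambda$-presentable as well, since retracts of $\lambda$-presentable objects are $\lambda$-presentable. One further cosmetic remark: you only need the pairwise tensor products $A \otimes B$ with $A,B$ in your set $\mathcal{S}$ of representatives to be $\lambda$-presentable, which your choice of $\lambda$ secures, so apart from the retract issue the cardinal bookkeeping is in order.
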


\begin{proof}
  As $\V_0$ is Grothendieck, it follows from \cite[Prop.~3.10]{Beke} that it is a locally $\gamma$-present\-able category for some regular cardinal $\gamma$. Note that for every regular cardinal $\lambda \geqslant \gamma$, the category $\V_0$ is also locally $\lambda$-presentable by \cite[Remark after Thm.~1.20]{AR}, so condition \rqmlbl{1} in \ref{lpb} holds for all such $\lambda$. Let $\mathcal{S}$ be a set of representatives for the isomorphism classes of $\gamma$-presentable objects in $\V_0$. Let $\mathcal{S}'$ be the set consisting of the unit object $I$ and all finite tensor products of objects in $\mathcal{S}$. As every object in $\V_0$ is presentable (that is, $\mu$-presentable for some regular cardinal $\mu$), see again \cite[Remark after Thm.~1.20]{AR}, and since $\mathcal{S}'$ is set, there exists some regular cardinal $\lambda \geqslant \gamma$ such that every object in $\mathcal{S}'$ is $\lambda$-presentable. In particular, condition \rqmlbl{2} in \ref{lpb} holds. If neccessary we can replace $\lambda$ with its successor $\lambda^+$ (every successor cardinal is regular) and thus by \cite[Exa.~2.13(2)]{AR} assume that $\gamma$ is \emph{sharply smaller} than $\lambda$ (in symbols: $\gamma \vartriangleleft \lambda$) in the sense of \cite[Dfn.~2.12]{AR}. This will play a role in the following argument, which shows that condition \rqmlbl{3} in \ref{lpb} holds.
  
    Let $X$ and $Y$ be $\lambda$-presentable objects in $\V_0$. As the category $\V_0$ is locally $\gamma$-presentable (and hence also $\gamma$-accessible) and $\lambda \vartriangleright \gamma$, it follows from    \cite[Rem.~2.15]{AR} that $X$ and $Y$ are direct sum\-mands of a $\lambda$-small directed colimit of $\gamma$-presentable objects in $\V_0$, i.e.~we have
\begin{equation*}
  X \oplus X' \,\cong\, \colim_{p \in P}X_p
  \qquad \textnormal{and} \qquad
  Y \oplus Y' \,\cong\, \colim_{q \in Q}Y_q
\end{equation*}  
where $X_p, Y_q \in \mathcal{S}$ and $|P|,|Q| < \lambda$. As $\otimes$ preserves all colimits, one has
\begin{equation}
\label{eq:double-colim}
(X \oplus X') \otimes (Y \oplus Y') \,\cong\, \colim_{(p,q) \in P \times Q}\, X_p \otimes Y_q\;. 
\end{equation}
By construction, each $X_p \otimes Y_q$ is in $\mathcal{S'}$, so it is a $\lambda$-presentable object. Furthermore, the category $P \times Q$ is $\lambda$-small. Hence \cite[Prop.~1.16]{AR} and \eqref{double-colim} imply that $(X \oplus X') \otimes (Y \oplus Y')$ is $\lambda$-presentable. 
Since $X \otimes Y$ is a direct summand of $(X \oplus X') \otimes (Y \oplus Y')$, the object $X \otimes Y$ is $\lambda$-presentable too by \cite[Remark after Prop.~1.16]{AR}.\footnote{\,According to \cite[Rem.~1.30(2)]{AR} it follows from \cite{MP} that if $\lambda$ is any regular cardinal $\geqslant \gamma$, then every $\lambda$-pre\-sent\-able object is a $\lambda$-small colimit of $\gamma$-presentable objects. If this is true, then a couple of simplifications can be made in the proof of \prpref{Grothendieck-lpb}. Indeed, in this case we would not have to worry about $\gamma$ being \emph{sharply smaller} than $\lambda$, and we could simply take both $X'$ and $Y'$ to be zero. However, as there seems to be some doubt about the correctness of the claim in \cite[Rem.~1.30(2)]{AR} (\url{https://mathoverflow.net/questions/325278/mu-presentable-object-as-mu-small-colimit-of-lambda-presentable-objects}), we have chosen to give a (slightly more complicated) proof of \prpref{Grothendieck-lpb} based on 
\cite[Rem.~2.15]{AR} instead.}
\end{proof}

Under \stpref{Setup-cosmos-Grothendieck} one can improve some of the statements about purity from \secref[Sections~]{purity} and \secref[]{tensor-embedding}. This will be our first goal.
	
\begin{lem}
\label{lem:gps11} 
For a short exact sequence $\mathbb{S}$ in $\V_0$, the following conditions are equivalent:
\begin{eqc}
\item $\mathbb{S} \otimes C$ is a short exact sequence in $\V_0$ for every $C \in \lPres{\V}$.
\item $\V_0(\mathbb{S},[C,E])$ is a short exact sequence in $\Ab$ for every $C \in \lPres{\V}$.
\item $\mathbb{S}$ is geometrically pure exact sequence in $\V_0$.
\end{eqc}
\end{lem}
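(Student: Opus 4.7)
The plan is to show the cyclic chain (iii) $\Rightarrow$ (i) $\Leftrightarrow$ (ii) $\Rightarrow$ (iii), where the first implication is immediate from the definition of $\mathscr{E}_\otimes$ (tensoring with any $V \in \V$ is stronger than tensoring only with $\lambda$-presentable objects), and (iii) is the only non-obvious content.

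For (i) $\Leftrightarrow$ (ii), the idea is to use the tensor–hom adjunction to get the natural isomorphism $\V_0(\mathbb{S} \otimes C, E) \cong \V_0(\mathbb{S}, [C,E])$ of sequences in $\Ab$. Since $E$ is an injective cogenerator in $\V_0$, the sequence $\mathbb{S} \otimes C$ is exact in $\V_0$ if and only if $\V_0(\mathbb{S} \otimes C, E)$ is exact in $\Ab$, and we are done by the above isomorphism.

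The main step is (i) $\Rightarrow$ (iii). Here I would invoke the locally $\lambda$-presentable structure of $\V_0$: every object $V \in \V$ can be written as a $\lambda$-directed colimit $V \cong \colim_{i \in \mathcal{I}} C_i$ with $C_i \in \lPres{\V}$, by \ref{present}. Since $-\otimes -$ is a left adjoint in each variable (by closedness of the monoidal structure), it preserves colimits, so
\begin{equation*}
  \mathbb{S} \otimes V \,\cong\, \colim_{i \in \mathcal{I}}\, (\mathbb{S} \otimes C_i)\;.
\end{equation*}
By hypothesis (i), each $\mathbb{S} \otimes C_i$ is a short exact sequence in $\V_0$. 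Now I use the fact that $\V_0$ is Grothendieck, so it satisfies axiom (AB5): directed (and in particular $\lambda$-directed) colimits are exact. Therefore the colimit of the exact sequences $\mathbb{S} \otimes C_i$ remains exact, which gives exactness of $\mathbb{S} \otimes V$ for every $V \in \V$, i.e.\ (iii).

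I don't expect any real obstacle here. The only point worth being explicit about is that $\lambda$-directed colimits are a particular case of directed (filtered) colimits, so AB5-exactness applies; the rest is a formal manipulation of adjunctions together with the defining features of a locally $\lambda$-presentable base from \stpref{Setup-cosmos-Grothendieck}.
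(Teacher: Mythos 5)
Your proposal is correct and follows essentially the same route as the paper: (i)$\Leftrightarrow$(ii) via the adjunction isomorphism $\V_0(\mathbb{S}\otimes C,E)\cong\V_0(\mathbb{S},[C,E])$ and the injective cogenerator property of $E$, and (i)$\Rightarrow$(iii) by writing $V$ as a ($\lambda$-)directed colimit of $\lambda$-presentable objects, commuting $\otimes$ past the colimit, and invoking AB5-exactness of directed colimits in the Grothendieck category $\V_0$. The paper's proof is the same argument, stated slightly more tersely.
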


\begin{proof}
The equivalence \eqclbl{i}\,$\Leftrightarrow$\,\eqclbl{ii} follows as $\V_0(\mathbb{S} \otimes C,E) \cong \V_0(\mathbb{S},[C,E])$ and $E$ is an injective cogenerator in $\V_0$. Evidently, \eqclbl{iii}\,$\Rightarrow$\,\eqclbl{i}. Finally, assume that \eqclbl{i} holds. Every object $V \in \V$ is a directed colimit of $\lambda$-presentable objects, say, $V \cong \colim_{q \in Q}C_q$. The sequence $\mathbb{S} \otimes V \cong \colim_{q \in Q}(\mathbb{S} \otimes C_q)$ is exact as each $\mathbb{S} \otimes C_q$ is exact (by assumption) and any directed colimit of exact sequences is again exact because $\V_0$ is Grothendieck. So \eqclbl{iii}~holds.
\end{proof}

\begin{cor}
  \label{cor:ab-equals-star}
  Consider the tensor embedding \textnormal{\eqref{tenemb}}. The exact structures $\mathscr{E}_\mathrm{ab}$ and $\mathscr{E}_\star$ on the category $[\lPres{\V},\V]_0$ from \lemref{induced-exact} agree on the subcategory $\operatorname{Ess.Im}\upTheta$, that is,
\begin{equation*}
  \mathscr{E}_\mathrm{ab}|_{\operatorname{Ess.Im}\upTheta}
  \,=\,
  \mathscr{E}_\star|_{\operatorname{Ess.Im}\upTheta}\;.
\end{equation*}  
\end{cor}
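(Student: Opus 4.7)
The plan is as follows. The inclusion $\mathscr{E}_\star|_{\operatorname{Ess.Im}\upTheta} \subseteq \mathscr{E}_\mathrm{ab}|_{\operatorname{Ess.Im}\upTheta}$ is automatic from the definition of $\mathscr{E}_\star$ (every $\star$-pure exact sequence is in particular a short exact sequence in the abelian category). Hence, the entire content of the corollary is the reverse inclusion: any short exact sequence in $[\lPres{\V},\V]_0$ whose three terms happen to lie in $\operatorname{Ess.Im}\upTheta$ is automatically $\star$-pure exact.

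To establish this, I would start with an exact sequence $0 \to F' \to F \to F'' \to 0$ in $[\lPres{\V},\V]_0$ with $F',F,F'' \in \operatorname{Ess.Im}\upTheta$. Since $\upTheta$ is fully faithful by \thmref{main}\,\prtlbl{a}, we may (up to isomorphism, using \lemref{induced-exact}) present this sequence as $\upTheta(\mathbb{S})$ where $\mathbb{S}\colon 0 \to X' \to X \to X'' \to 0$ is a sequence in $\V_0$. Recall that (co)limits in a functor category $[\lPres{\V},\V]_0$ are computed pointwise, so evaluation $\mathrm{Ev}_A$ at any object $A \in \lPres{\V}$ is an exact functor. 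Evaluating the given sequence at $I \in \lPres{\V}$ (note $I$ is $\lambda$-presentable by \stpref{Setup-cosmos-Grothendieck}) and using the canonical isomorphism $\upTheta(Y)(I) = Y \otimes I \cong Y$ shows that $\mathbb{S}$ itself is exact in $\V_0$.

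Next, evaluating the same sequence at an arbitrary $A \in \lPres{\V}$ gives that $\mathbb{S} \otimes A \colon 0 \to X' \otimes A \to X \otimes A \to X'' \otimes A \to 0$ is exact in $\V_0$ for every $A \in \lPres{\V}$. This is precisely condition \eqclbl{i} of \lemref{gps11}, so $\mathbb{S}$ is a geometrically pure exact sequence in $\V_0$. Finally, the equivalence \eqclbl{i$^\prime$}\,$\Leftrightarrow$\,\eqclbl{ii$^\prime$} of \lemref{i-ii} (applicable since $I \in \lPres{\V} = \A$) then yields that $\upTheta(\mathbb{S})$ is $\star$-pure exact, which is what we wanted.

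No real obstacle is anticipated: the argument is essentially a chaining together of \lemref{gps11} (which crucially requires the Grothendieck hypothesis of \stpref{Setup-cosmos-Grothendieck} in order to propagate exactness from $\lambda$-presentables $A$ to arbitrary $V \in \V$ via a $\lambda$-directed colimit) with the equivalence in \lemref{i-ii}. The only point to be careful about is the identification of the abstract sequence in $\operatorname{Ess.Im}\upTheta$ with $\upTheta(\mathbb{S})$ for a genuine sequence $\mathbb{S}$ in $\V_0$, which is handled by full faithfulness of $\upTheta$ together with the exactness of $\mathrm{Ev}_I$.
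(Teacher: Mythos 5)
Your proof is correct and follows essentially the same route as the paper: reduce an exact sequence in $\operatorname{Ess.Im}\upTheta$ to the form $\upTheta_0(\mathbb{S})$, use \lemref{gps11} to upgrade exactness of $\mathbb{S}\otimes A$ for $\lambda$-presentable $A$ to geometric purity of $\mathbb{S}$, and then apply the equivalence \eqclbl{i$^\prime$}\,$\Leftrightarrow$\,\eqclbl{ii$^\prime$} of \lemref{i-ii}. The only cosmetic difference is that you spell out the identification step (exactness of evaluation at $I$ and at arbitrary $A$) which the paper leaves implicit.
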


\begin{proof}
  Every short exact sequence $0 \to F' \to F \to F'' \to 0$ in $\operatorname{Ess.Im}\upTheta$ has the form (up to isomorphism) $\upTheta_0(\mathbb{S})$ for some short exact sequence $\mathbb{S} = 0 \to X' \to X \to X'' \to 0$~in~$\V_0$. \lemref{gps11} shows that conditions \eqclbl{i} and \eqclbl{i$^\prime$} in \lemref{i-ii} (with $\A = \lPres{\V}$) are equivalent. Thus \eqclbl{ii} and \eqclbl{ii$^\prime$} in \lemref{i-ii} are equivalent too, as asserted.
\end{proof}

We know from \prpref{dual2} that the exact category $(\V_0,\mathscr{E}_\otimes)$ has enough relative injectives. An alternative demonstration of this fact is contained in the next proof. 

\begin{prp}
\label{prp:gpi-product}
An object $X\in \V_0$ is geometrically pure injective if and only if it is a direct summand of an object $\prod_{q \in Q}\mspace{1mu}[B_q,E]$ for some family $\{B_q\}_{q \in Q}$ of $\lambda$-presentable objects.
\end{prp}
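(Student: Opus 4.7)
My plan is to prove both directions, with $(\Leftarrow)$ being routine and $(\Rightarrow)$ requiring a carefully chosen embedding. For $(\Leftarrow)$, each $[B_q,E]$ is geometrically pure injective by \lemref{dual}, and the natural isomorphism $\V_0(\mathbb{S},\prod J_q)\cong\prod\V_0(\mathbb{S},J_q)$ together with exactness of products in $\Ab$ shows that geometrically pure injectivity is closed under products; closure under retracts is equally formal. This direction should take a couple of lines.

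For $(\Rightarrow)$, the strategy is to construct a ``universal'' geometrically pure monomorphism from $X$ into an object $Y$ of the required form: once $\sigma \colon X \to Y$ is known to be geometrically pure, the hypothesis that $X$ is geometrically pure injective immediately splits $\sigma$ and exhibits $X$ as a direct summand of $Y$. Since $\lPres{\V}$ is small and $\V_0$ has small hom-sets, the collection
\begin{equation*}
Q \,:=\, \{(B,f) \,:\, B \in \lPres{\V},\ f \in \V_0(X,[B,E])\}
\end{equation*}
is a set, and I would take $Y := \prod_{(B,f) \in Q}[B,E]$ together with the morphism $\sigma \colon X \to Y$ whose $(B,f)$-component is $f$. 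The content of the argument is then the claim that $\sigma$ is geometrically pure, and the retraction of $\sigma$ coming from gp-injectivity of $X$ concludes the proof.

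To verify the claim I would proceed in two steps. First, for each $B \in \lPres{\V}$ the morphism $\sigma \otimes B$ is monic: using the injective cogenerator property of $E$ together with the tensor-hom adjunction, this is equivalent to surjectivity of the natural map $\V_0(\sigma,[B,E])$, and every $\beta \colon X \to [B,E]$ lifts tautologically via the projection $\pi_{(B,\beta)} \colon Y \to [B,E]$ onto the corresponding factor. The main conceptual obstacle one naively hits is that this lifting argument \emph{fails} for an arbitrary $V \in \V$: if $V = \colim_q C_q$ with $C_q \in \lPres{\V}$ and $[V,E] = \lim_q[C_q,E]$, the componentwise projections $\pi_{(C_q,\beta_q)}$ associated to a compatible family $\beta = (\beta_q)$ are themselves \emph{not} compatible with the transition maps of the inverse limit, so no direct compatible lift exists. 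My plan is to sidestep this obstacle by invoking colimit exactness of the Grothendieck category $\V_0$: since $-\otimes-$ preserves colimits in each variable, $\sigma \otimes V = \colim_q(\sigma \otimes C_q)$ is a $\lambda$-directed colimit of monomorphisms, and is therefore itself monic. This yields $\sigma$ geometrically pure and completes the proof.
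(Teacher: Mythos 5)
Your proof is correct and follows essentially the same route as the paper: the paper's ``only if'' argument uses the identical canonical map $\alpha\colon X \to \prod_{C}[C,E]^{\V_0(X,[C,E])}$ (your product over pairs $(B,f)$ is just this written out), verifies purity against $\lambda$-presentable objects via the adjunction and the injective cogenerator $E$, and passes to arbitrary $V$ by the same $\lambda$-directed colimit argument, which the paper has packaged separately as \lemref{gps11}. The only cosmetic difference is that the paper first checks $\alpha$ is monic by a short separate argument, whereas you obtain this as the $V=I$ case of geometric purity.
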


\begin{proof}
  The ``if'' part is clear since each object $[B_q,E]$ is geometrically pure injective by \lemref{dual}. Conversely, let $X$ be any object in $\V_0$. Choose a set $\mathcal{C}$ of representatives for the isomorphism classes of $\lambda$-presentable objects in $\V_0$ and consider the canonical morphism
\begin{equation*}
  \alpha \colon X \longrightarrow \textstyle\prod_{C \in \mathcal{C}}\mspace{1mu}[C,E]^{J_C}
  \qquad \textnormal{where} \qquad J_C = \V_0(X,[C,E])\;.
\end{equation*}
We will show that $\alpha$ is a monomorphism. Let $\beta \colon Y \to X$ be a morphism with $\alpha\beta = 0$. This implies that for every $C \in \mathcal{C}$ the map $\V_0(\beta,[C,E]) \colon \V_0(X,[C,E]) \to \V_0(Y,[C,E])$ is zero. By adjunction this means that $\V_0(\beta \otimes C,E)=0$ and thus $\beta \otimes C=0$ since $E$ is an injective cogenerator. As every object in $\V_0$, in particular the unit object $I$, is a directed colimit of objects from $\mathcal{C}$, and since $\beta \otimes -$ preserves colimits, it follows that $\beta \cong \beta \otimes I = 0$.

By what we have just proved there is a short exact sequence,
\begin{equation*}
  \mathbb{S} \,=\, 
  \xymatrix@C=1.5pc{
  0 \ar[r] & X \ar[r]^-{\alpha} & \textstyle\prod_{C \in \mathcal{C}}\mspace{1mu}[C,E]^{J_C}
  \ar[r] & \Coker\alpha \ar[r] & 0
  }.
\end{equation*}
By construction of $\alpha$, every morphism $X \to [C,E]$ with $C \in \mathcal{C}$ factors through $\alpha$; thus for every $C \in \mathcal{C}$ the morphism $\V_0(\alpha,[C,E])$ is surjective and therefore $\V_0(\mathbb{S},[C,E])$ is a short exact sequence. It follows from \lemref{gps11} that $\mathbb{S}$ is geometrically pure exact. Thus, if $X$ is geometrically pure injective, then $\mathbb{S}$ splits and $X$ is a direct summand in $\prod_{C \in \mathcal{C}}[C,E]^{J_C}$.
\end{proof}

\begin{dfn}[{\cite[Dfn.~2.1]{BQR98}}]
\label{dfn:lambda-small-colimit}
Let $\lambda$ be a regular cardinal (in our applications, $\lambda$ will be the fixed cardinal from \stpref{Setup-cosmos-Grothendieck}) and let $\A$ be a $\V$-category. Let $G \colon \mathcal{K}^{\mathrm{op}} \rightarrow \V$ and $F \colon \mathcal{K} \rightarrow \A$ be $\V$-functors and assume that the weighted colimit $G \star F \in \A$ exists. If the weight $G$ is $\lambda$-small in the sense of \dfnref{lambda-small-functor}, then $G \star F$ is called a \emph{$\lambda$-small weighted colimit}. 

A $\V$-functor $T \colon \A \rightarrow \mathcal{B}$ is said to be \textit{$\lambda$-cocontinuous} if it preserves all $\lambda$-small weighted colimits, that is, for every $\lambda$-small weight $G$ one has $G \star (T \circ F) \cong T(G \star F)$. We set
\begin{equation*}
\lCocon[\lambda]{\A}{\mathcal{B}}
\,=\, 
\big\{\textnormal{the collection of all $\lambda$-cocontinuous $\V$-functors $\A \to \mathcal{B}$} \big\}\;.
\end{equation*} 
\end{dfn}

\begin{rmk}
  \label{rmk:has-a-small-weighted-colimits}
  By \cite[Prop.~3.2 and Cor.~3.3]{BQR98} the full $\V$-subcategory $\A=\lPres{\V}$ is closed under $\lambda$-small weighted colimits in $\V$. Thus $\lPres{\V}$ has all $\lambda$-small weighted colimits. 
\end{rmk}
         
\begin{prp}
  \label{prp:essim}
  For the tensor embedding \textnormal{\eqref{tenemb}} the essential image is:
  \begin{equation*}
    \operatorname{Ess.Im}\upTheta
    \,=\,
    \lCocon{\lPres{\V}}{\V}\;.
  \end{equation*}
\end{prp}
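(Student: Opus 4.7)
The plan is to prove both inclusions of the asserted equality separately. The inclusion $\operatorname{Ess.Im}\upTheta \subseteq \lCocon{\lPres{\V}}{\V}$ is essentially already contained in \thmref{main}: for each $X \in \V$, the $\V$-functor $X \otimes - \colon \V \to \V$ admits the right $\V$-adjoint $[X,-]$ and so preserves every small weighted colimit, and by \rmkref{has-a-small-weighted-colimits} the $\lambda$-small weighted colimits in $\lPres{\V}$ agree with those formed in $\V$; hence $\upTheta(X) = (X \otimes -)|_{\lPres{\V}}$ preserves all $\lambda$-small weighted colimits.

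For the reverse inclusion I would fix a $\lambda$-cocontinuous $\V$-functor $T \colon \lPres{\V} \to \V$ and show that the counit $\theta_T \colon \upTheta(T(I)) \to T$ of the adjunction $\upTheta_0 \dashv (\mathrm{Ev}_I)_0$ constructed in the proof of \lemref{induced-exact} is an isomorphism in $[\lPres{\V},\V]$; this will give $T \cong \upTheta(T(I)) \in \operatorname{Ess.Im}\upTheta$. The crucial input is the observation that every $A \in \lPres{\V}$ arises as a specific $\lambda$-small weighted colimit built from the unit object~$I$. Namely, let $\iota \colon \mathcal{I} \to \lPres{\V}$ be the $\V$-functor from the unit $\V$-category (see~\ref{ect}) sending $\ast$ to $I$, and let $G_A \colon \mathcal{I}^{\mathrm{op}} \to \V$ be the weight with $G_A(\ast) = A$. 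A direct unfolding of the defining universal property \eqref{eq-wcolim} (using $[A,[I,Y]] \cong [A,Y]$) yields $G_A \star \iota \cong A$ in $\lPres{\V}$, and $G_A$ is a $\lambda$-small weight in the sense of \dfnref{lambda-small-functor} precisely because the hom-object $\mathcal{I}(\ast,\ast) = I$ is $\lambda$-presentable by condition \rqmlbl{2} of \ref{lpb} and because $A$ is $\lambda$-presentable by hypothesis.

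Now consider the full $\V$-subcategory $\mathcal{D} \subseteq \lPres{\V}$ on which $\theta_T$ is an isomorphism. Since $\upTheta(T(I))(I) = T(I) \otimes I \cong T(I)$ and $(\theta_T)_I$ corresponds under the adjunction to $\mathrm{id}_{T(I)}$, one checks that $I \in \mathcal{D}$. Moreover $\mathcal{D}$ is closed under $\lambda$-small weighted colimits in $\lPres{\V}$: both $\upTheta(T(I))$ and $T$ are $\lambda$-cocontinuous, and a pointwise isomorphism between the diagrams $\upTheta(T(I)) \circ F$ and $T \circ F$ is transported by the universal property of weighted colimits to an isomorphism on $G \star F$. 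Combined with the previous paragraph, this yields $A = G_A \star \iota \in \mathcal{D}$ for every $A \in \lPres{\V}$, so $\mathcal{D} = \lPres{\V}$ and $\theta_T$ is an isomorphism of $\V$-functors.

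The main obstacle, such as it is, is the density-style observation that every $A \in \lPres{\V}$ is itself the $\lambda$-small weighted colimit $G_A \star \iota$. This pivots on the $\lambda$-presentability of the unit object $I$ (condition \rqmlbl{2} of \ref{lpb}), without which the weight $G_A$ would fail to be $\lambda$-small and the reduction $T(A) \cong T(I) \otimes A$ would break down; the remaining steps are formal closure and naturality arguments once this density is in hand.
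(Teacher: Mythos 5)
Your proof is correct and takes essentially the same route as the paper's: both hinge on realizing each $A \in \lPres{\V}$ as the $\lambda$-small weighted colimit of the one-object diagram constant at $I$ with weight $G_A(\ast)=A$ (which is $\lambda$-small precisely because $I$ and $A$ are $\lambda$-presentable), whence $\lambda$-cocontinuity of $T$ gives $T(A) \cong A \otimes T(I)$. Your extra scaffolding---the subcategory $\mathcal{D}$ and its closure under $\lambda$-small weighted colimits---collapses to the paper's direct computation since the diagram involved only touches the object $I$, though phrasing the conclusion via the counit $\theta_T$ does make the $\V$-naturality of the isomorphism $\upTheta(T(I)) \cong T$ a bit more explicit.
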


\begin{proof}
  For every object $X \in \V$ the $\V$-functor $\upTheta(X) = (X \otimes -)|_{\lPres{\V}}$ is $\lambda$-cocontinuous. Indeed, $X \otimes - \colon \V \to \V$ preserves all weighted colimits, and by \rmkref{has-a-small-weighted-colimits} any $\lambda$-small weighted colimit in $\lPres{\V}$ is, in fact, a ($\lambda$-small) weighted colimit in $\V$. Thus ``$\subseteq$'' holds.
  
  Conversely, let $T \colon \lPres{\V} \to \V$ be any $\lambda$-cocontinuous $\V$-functor. Consider for any $A \in \lPres{\V}$ the $\V$-functors $G \colon \mathcal{I}^{\mathrm{op}} \to \V$ and $F \colon \mathcal{I} \rightarrow \lPres{\V}$ given by $G(*)=A$ and $F(*)=I$. Here $\mathcal{I}$ is the unit $\V$-category from \ref{ect}.  Note that the weight $G$ is $\lambda$-small as the objects $\mathcal{I}^{\mathrm{op}}(*,*) = I$ and $G(*)=A$ are $\lambda$-presentable. Evidently one has
\begin{equation*}
G \star F \,=\, G(*) \otimes F(*) \,=\, A\otimes I\;,
\end{equation*}
and since $T$ is assumed to preserve $\lambda$-small weighted colimits, it follows that
\begin{equation*}
  T(A) \,\cong\, T(A\otimes I) \,\cong\, T(G \star F) 
  \,\cong\, G \star (T \circ F) \,\cong\, G(*) \otimes (T \circ F)(*) \,\cong\, A \otimes T(I)\;.
\end{equation*}
Hence $T$ is $\V$-naturally isomorphic to $\upTheta(T(I)) = (T(I) \otimes -)|_{\lPres{\V}}$, so $T \in \operatorname{Ess.Im}\upTheta
$.
\end{proof}

\begin{thm}
\label{thm:main2} 
Let $\V$ be as in \stpref{Setup-cosmos-Grothendieck}. The tensor embedding 
\begin{equation*}
\upTheta \colon \V \longrightarrow [\lPres{\V},\V]
\qquad \text{given by} \qquad 
X \longmapsto (X\otimes-)|_{\lPres{\V}}
\end{equation*}
is cocontinuous and it induces two equivalences:
\begin{prt}
\item An equivalence of\, $\V$-categories, $\upTheta \colon \V \stackrel{\simeq}{\longrightarrow} \lCocon{\lPres{\V}}{\V}$.

\item An equivalence of exact categories, $\upTheta_0 \colon (\V_0,\mathscr{E}_\otimes) \stackrel{\simeq}{\longrightarrow} \lCocon{\lPres{\V}}{\V}$, where the latter is an extension-closed subcategory of the abelian category $[\lPres{\V},\V]_0$.
\end{prt}
\end{thm}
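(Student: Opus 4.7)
The plan is to assemble \thmref{main2} from the tools already established earlier in the paper, essentially by specializing \thmref{main} to the full $\V$-subcategory $\A = \lPres{\V}$ and then using \prpref{essim} and \corref{ab-equals-star} to refine the statements.

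The first task is to verify the hypotheses of \thmref{main} in our situation. Since $\V$ is a locally $\lambda$-presentable base by \stpref{Setup-cosmos-Grothendieck}, condition \rqmlbl{2} of \ref{lpb} ensures $I \in \lPres{\V}$; and $\lPres{\V}$ is essentially small (being a small skeleton of the $\lambda$-presentable objects). The setup \stpref{setup-abelian-cosmos-2} of \thmref{main} is satisfied because $\V$ is a Grothendieck cosmos with an injective cogenerator $E$. Cocontinuity of $\upTheta$ is then the first conclusion of \thmref{main}, and it transfers verbatim to the statement at hand.

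For part \prtlbl{a}, the plan is to combine \thmref{main}\prtlbl{a}, which gives an equivalence of $\V$-categories $\upTheta \colon \V \stackrel{\simeq}{\longrightarrow} \operatorname{Ess.Im}\upTheta$, with \prpref{essim}, which identifies the essential image as $\operatorname{Ess.Im}\upTheta = \lCocon{\lPres{\V}}{\V}$. Composing these two identifications yields the desired $\V$-equivalence.

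For part \prtlbl{b}, I would start from \thmref{main}\prtlbl{b}, which gives an exact equivalence
\[
\upTheta_0 \colon (\V_0,\mathscr{E}_\otimes) \stackrel{\simeq}{\longrightarrow} (\operatorname{Ess.Im}\upTheta,\mathscr{E}_\star|_{\operatorname{Ess.Im}\upTheta})\;.
\]
The key upgrade in our Grothendieck setting is \corref{ab-equals-star}, which tells us that on $\operatorname{Ess.Im}\upTheta$ the $\star$-pure exact structure coincides with the exact structure inherited from the abelian structure on $[\lPres{\V},\V]_0$. Thus the target can be rewritten as $(\operatorname{Ess.Im}\upTheta,\mathscr{E}_\mathrm{ab}|_{\operatorname{Ess.Im}\upTheta})$, and by \prpref{essim} this category is precisely $\lCocon{\lPres{\V}}{\V}$ with its inherited exact structure. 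Finally, \lemref{induced-exact} guarantees that $\operatorname{Ess.Im}\upTheta = \lCocon{\lPres{\V}}{\V}$ is extension-closed in the abelian category $[\lPres{\V},\V]_0$, justifying the phrasing in the statement.

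Since every needed ingredient has already been proven in earlier sections, no real obstacle remains; the proof is essentially a careful bookkeeping exercise that strings together \thmref{main}, \prpref{essim}, \corref{ab-equals-star}, and \lemref{induced-exact}. The only subtle point worth double-checking is that \prpref{essim}, proved for the $\V$-functor $\upTheta$, yields the identification of essential images at the level of underlying ordinary categories as well — but this follows from the definition of $\operatorname{Ess.Im}$ as the essential image of the underlying ordinary functor $\upTheta_0$, so no additional work is required.
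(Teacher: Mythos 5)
Your proposal is correct and follows the paper's own proof, which is literally ``Combine \thmref{main}, \prpref{essim}, and \corref{ab-equals-star}''; you have simply spelled out the bookkeeping (verifying $I \in \lPres{\V}$, invoking \lemref{induced-exact} for extension-closedness) that the paper leaves implicit. No issues.
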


\begin{proof}
  Combine \thmref{main}, \prpref{essim}, and \corref{ab-equals-star}.
\end{proof}

\section{The case where $\V$ is generated by dualizable objects}
\label{sec:dualizable}

In this final section, we work with \stpref{Setup-cosmos-Grothendieck-strong} below. We prove in \prpref{aleph0-base}\prtlbl{a} that under the assumptions in \stpref{Setup-cosmos-Grothendieck-strong}, $\V$ is a locally finitely presentable base. Thus in \stpref{Setup-cosmos-Grothendieck} and in all of the results from \secref{Grothendieck-cosmos} we can set $\lambda = \aleph_0$. As it is customary, we write
\begin{equation*}
  \fp{\V} \,:=\, \lPres[\aleph_0]{\V}
\end{equation*}
for the class of finitely presentable objects in $\V$, so the tensor embedding \eqref{tenemb} becomes:
\begin{equation*}
  \upTheta \colon \V \longrightarrow [\fp{\V},\V]\;.
\end{equation*}
We will improve and make \thmref{main2} more explicit in this situation. Let us explain the two main insights we obtain in this section:
\begin{itemlist}
\item We know from \thmref{main2}\prtlbl{b} that the geometrically pure exact category $(\V_0,\mathscr{E}_\otimes)$~is equivalent, as an exact category, to $\operatorname{Ess.Im}\upTheta =
\lCocon[\aleph_0]{\fp{\V}}{\V}$. We prove in \thmref{main3} that $\operatorname{Ess.Im}\upTheta$ also coincides with the class of absolutely pure objects in $[\fp{\V},\V]_0$ in the sense of \dfnref{abspure}.

\item As mentioned in the remarks preceding \lemref{3T}, it follows from \cite[Thm.~4.2]{AG16} that $[\fp{\V},\V]_0$ is a Grothendieck category; in particular, it has enough injectives. \thmref{main3} gives a very concrete description of the injective objects in $[\fp{\V},\V]_0$: they are precisely the $\V$-functors of the form $(X \otimes -)|_{\fp{\V}}$ where $X$ is a geometrically pure injective object in $\V_0$ in the sense of \dfnref{geo-exact-structure}.
\end{itemlist}

\begin{stp}
\label{stp:Setup-cosmos-Grothendieck-strong} 
In this section, $(\V,\otimes,I,[-,-])$ is a cosmos and $\V$ is a Grothendieck category (just as in \stpref{Setup-cosmos-Grothendieck}) subject to the following requirements:
\begin{itemlist}
\item The unit object $I$ is finitely presentable in $\V_0$.
\item The category $\V_0$ is generated by a set of dualizable objects (defined in  \ref{dualizable} below). 
\end{itemlist} 
\end{stp}

\begin{exa}
\label{exa:Ch-new}
The two Grothendieck cosmos from \exaref{Ch}, that is,
\begin{equation}
  \label{eq:Chx2}
  (\Ch{R},\tTen,\stalk{R},\tHom)
  \qquad \textnormal{and} \qquad (\Ch{R},\mTen,\disc{R},\mHom)\;,
\end{equation} 
satisfy \stpref{Setup-cosmos-Grothendieck-strong}. Indeed, it is not hard to see that 
\begin{equation*}
  \fp{\Ch{R}} \,=\, \{ X \in \Ch{R} \,|\, \textnormal{$X$ is bounded and each module $X_n$ is finitely presentable} \}\;,
\end{equation*}
in particular, the unit objects $\stalk{R}$ and $\disc{R}$ of the two cosmos are finitely presentable.~Thus the first condition in \stpref{Setup-cosmos-Grothendieck-strong} holds. Evidently, 
\begin{equation*}
  \{\upSigma^n\disc{R}\,|\, n \in \mathbb{Z}\} \qquad
  \textnormal{(where $\upSigma^n$ is the $n$'th shift of a complex)}
\end{equation*}
is a generating set of objects in $\Ch{R}$. Each object $\upSigma^n\disc{R}$
is dualizable in the rightmost cosmos in \eqref{Chx2} as $I=\disc{R}$ is the unit object. The object $\upSigma^n\disc{R}$
is also dualizable in the leftmost cosmos in \eqref{Chx2}, indeed, it is not hard to see that every \emph{perfect} $R$-complex (i.e.~a bounded complex of finitely generated projective $R$-modules) is dualizable in this cosmos. Hence the second condition in \stpref{Setup-cosmos-Grothendieck-strong} holds as well.
\end{exa}

\begin{exa}
  \label{exa:QcohX-new}
  Consider the Grothendieck cosmos from  \exaref{QcohX}\prtlbl{b}, that is,
  \begin{equation*}
    \label{eq:Qcoh}
    (\Qcoh{X},\otimes_{X},\mathscr{O}_X,\shHomqce_X)\;.
  \end{equation*}  
  If $X$ is \emph{concentrated} (e.g.~if $X$ is quasi-compact
and quasi-separated), then $I=\mathscr{O}_X$ is finitely presentable in $\Qcoh{X}$ by \cite[Prop.~3.7]{EEO16}, so the first condition in \stpref{Setup-cosmos-Grothendieck-strong} holds. 

Note by the way, that for any noetherian scheme $X$, it follows from \cite[Lem.~B.3]{CS17} (and the fact that in a locally noetherian Grothendick category, finitely presentable objects and noetherian objects are the same, see \cite[Chap.~V\S4]{Ste}) that one has:
\begin{equation*}
  \fp{\Qcoh{X}} \,=\, \Coh{X} \,:=\, \{ X \in \Qcoh{X} \,|\, \textnormal{$X$ is coherent} \}\;,
\end{equation*}

   The dualizable objects in $\Qcoh{X}$ are precisely the locally free sheaves of finite rank, see \cite[Prop.~4.7.5]{Bra}, and hence the second condition in \stpref{Setup-cosmos-Grothendieck-strong} holds if and only if $X$ has the \emph{strong resolution property} in the sense of \cite[Dfn.~2.2.7]{Bra}. Many types of schemes---for example, any projective scheme and any separated noetherian locally factorial scheme---do have the strong resolution property; see the remarks after \cite[Dfn.~2.2.7]{Bra}. We refer to \cite[3.4]{MR3720855} for further remarks and insights about the strong resolution property.
\end{exa}

\begin{bfhpg}[Evaluation morphisms] 
\label{eval-morphisms} For any closed symmetric monoidal category there are canonical natural morphisms in $\V_0$ (see e.g.~\cite[III\S1, p.~120]{LewisMay} for construction of the first map):
\begin{equation*}
  \label{eq:eval}
  \vartheta_{XYZ} \colon [X,Y] \otimes Z \longrightarrow [X,Y \otimes Z]
  \qquad \text{and} \qquad
  \eta_{XYZ} \colon X \otimes [Y,Z] \longrightarrow [[X,Y],Z]   
\end{equation*}
In commutative algebra, that is, in the case where $\V=\Mod{R}$ for a commutative ring $R$, people sometimes refer to $\vartheta$ as \emph{tensor evaluation} and to $\eta$ as \emph{homomorphism evaluation}; see for example \cite[(A.2.10) and (A.2.11)]{Chr}. 
\end{bfhpg}

We now consider \emph{dualizable} objects as defined in \cite{HPS} and \cite{LewisMay}. 

\begin{bfhpg}[Dualizable objects] 
\label{dualizable} 
For every object $P$ in $\V$, the following conditions are equivalent:
\begin{eqc}
\item The canonical morphism $[P,I]\otimes P \to [P,P]$ is an isomorphism.
\item The canonical morphism $[P,I]\otimes Z \to [P,Z]$ is an isomorphism for all objects $Z \in \V$.
\item $\vartheta_{PYZ} \colon [P,Y]\otimes Z \to [P,Y\otimes Z]$ is an isomorphism for all objects $Y,Z \in \V$.
\end{eqc}  
Evidently, \eqclbl{iii}\,$\Rightarrow$\,\eqclbl{ii}\,$\Rightarrow$\,\eqclbl{i}. Objects $P$ satisfying \eqclbl{i} are in \cite[III\S1, Dfn.~1.1]{LewisMay} called \emph{finite}, and in Prop.~1.3(ii) in \emph{loc.~cit.} it is proved that \eqclbl{i} implies \eqclbl{iii}. Hence all three conditions are equivalent. Objects $P$ satisfying \eqclbl{ii} are called \emph{(strongly) dualizable} in \cite[Dfn.~1.1.2]{HPS}; we shall adopt this terminology, ``dualizable'', for objects satisfying conditions \eqclbl{i}--\eqclbl{iii}. 
\end{bfhpg}

\begin{rmk}
\label{rmk:P-dual}
If $P \in \V$ is dualizable, then so is $[P,I]$, and the natural map $P \to [[P,I],I]$ is an isomorphism; see \cite[III\S1, Props.~1.2 and 1.3(i)]{LewisMay}. By condition \ref{dualizable}\eqclbl{ii} there is a natural isomorphism $[P,I] \otimes - \cong [P,-]$. By replacing $P$ with $[P,I]$ one also gets $P \otimes - \cong [[P,I],-]$.
\end{rmk}

\begin{lem}
\label{lem:lem-dualizable} 
If $P$ and $Q$ are dualizable, then so are $P \oplus Q$, $P \otimes Q$, and $[P,Q]$.
\end{lem}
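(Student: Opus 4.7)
The plan is to handle the three cases in the order $P\oplus Q$, $P\otimes Q$, $[P,Q]$, using the characterization \ref{dualizable}\eqclbl{ii}: $P$ is dualizable iff the canonical morphism $[P,I]\otimes Z \to [P,Z]$ is an isomorphism for every $Z\in\V$. Throughout I will freely use the standing assumption that $\V$ is abelian so that finite products and coproducts in $\V$ coincide, and I will use \rmkref{P-dual}, which says that for dualizable $P$ there is a natural isomorphism $[P,I]\otimes - \cong [P,-]$.

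For $P\oplus Q$, the idea is that the relevant canonical map decomposes as a direct sum. Concretely, since $[-,Z]$ sends coproducts to products and $-\otimes Z$ preserves coproducts, we have natural isomorphisms $[P\oplus Q, Z] \cong [P,Z]\oplus[Q,Z]$ and $[P\oplus Q,I]\otimes Z \cong ([P,I]\otimes Z)\oplus ([Q,I]\otimes Z)$. Under these identifications the canonical map for $P\oplus Q$ is the direct sum of the canonical maps for $P$ and $Q$, both of which are isomorphisms by assumption; so $P\oplus Q$ is dualizable. For $P\otimes Q$, I would build up the required isomorphism $[P\otimes Q,I]\otimes Z \to [P\otimes Q,Z]$ as a chain, starting from the tensor-hom adjunction $[P\otimes Q,-]\cong [P,[Q,-]]$. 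Applied at $Z=I$ this gives $[P\otimes Q,I]\cong[P,[Q,I]]\cong [P,I]\otimes[Q,I]$, where the second step uses dualizability of $P$ via \rmkref{P-dual}. Tensoring on the right with $Z$ and then applying dualizability of $Q$ and of $P$ in turn yields
\[
[P\otimes Q,I]\otimes Z\,\cong\, [P,I]\otimes[Q,I]\otimes Z\,\cong\, [P,I]\otimes[Q,Z]\,\cong\,[P,[Q,Z]]\,\cong\,[P\otimes Q,Z],
\]
and a routine naturality check shows that this composite is precisely the canonical evaluation morphism.

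Finally, for $[P,Q]$ the plan is to reduce to the previous cases. By \rmkref{P-dual}, dualizability of $P$ gives a natural isomorphism $[P,Q]\cong[P,I]\otimes Q$, and the same remark tells us that the dual $[P,I]$ is itself dualizable. Since $Q$ is dualizable by hypothesis and we have just shown that tensor products of dualizable objects are dualizable, $[P,I]\otimes Q$ is dualizable, hence so is $[P,Q]$. The main point requiring care is the $P\otimes Q$ case, where one must check that the various canonical maps assembled above really do compose to the single canonical morphism appearing in \ref{dualizable}\eqclbl{ii}; this is a bookkeeping exercise in coherence, but it is the only place where the argument is not purely formal.
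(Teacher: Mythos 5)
Your proof is correct and follows essentially the same route as the paper: additivity of $-\otimes-$ and $[-,-]$ for $P\oplus Q$, a chain of isomorphisms built from the tensor--hom adjunction together with the dualizability of $P$ and $Q$ for $P\otimes Q$, and the reduction $[P,Q]\cong[P,I]\otimes Q$ combined with \rmkref{P-dual} for the last case. The only cosmetic difference is that for $P\otimes Q$ the paper verifies condition \ref{dualizable}\eqclbl{iii} with a general $Y$ in place of $I$, so that each link in its chain is visibly an instance of a canonical map, which slightly lightens the coherence check you rightly flag at the end.
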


\begin{proof}
  Using condition \ref{dualizable}\eqclbl{ii} and additivity of the functors $-\otimes-$ and $[-,-]$, it is easy to see that $P \oplus Q$ is dualizable. By condition \ref{dualizable}\eqclbl{iii} the computation
\begin{equation*}
  [P \otimes Q,Y] \otimes Z
  \,\cong\,
  [P,[Q,Y]] \otimes Z
  \,\cong\,
  [P,[Q,Y] \otimes Z]
  \,\cong\,
  [P,[Q,Y \otimes Z]]
  \,\cong\,
  [P \otimes Q,Y \otimes Z]
\end{equation*}  
shows that $P \otimes Q$ is dualizable. To see that $[P,Q]$ is dualizable, note that $[P,Q] \cong [P,I] \otimes Q$ (as $P$ is dualizable) and apply what we have just proved combined with \rmkref{P-dual}.
\end{proof}

\begin{dfn}
\label{dfn:injective} 
Following Lewis \cite[Dfn.~1.1]{Lew99} we use the next terms for an object $J$ in $\V$.
\begin{align*}
  \textnormal{$J$ is injective} &\ \ \iff \ \
  \textnormal{the functor $\V_0(-,J) \colon \V_0^{\mathrm{op}} \to \Ab$ is exact.}
  \\
  \textnormal{$J$ is \emph{internally injective}} &\ \ \iff \ \
  \textnormal{the functor $[-,J] \colon \V_0^{\mathrm{op}} \to \V_0$ is exact.}  
\end{align*}
\end{dfn}

A question of interest in \cite{Lew99} is when every injective object in $\V$ is internally injective; in this case Lewis would say that $\V$ satisfies the condition {\bf IiII} ({\bf I}njective
{\bf i}mplies {\bf I}nternally {\bf I}njective). As we shall prove next, this condition does hold under \stpref{Setup-cosmos-Grothendieck-strong}.

\begin{prp}
\label{prp:aleph0-base} 
For $\V$ as in \stpref{Setup-cosmos-Grothendieck-strong}, the following assertions hold.
\begin{prt}
\item $\V$ is a locally finitely presentable base (see \ref{lpb}).
\item $\V$ satisfies Lewis' condition {\bf IiII}, i.e.~every injective object is internally injective.
\item For objects $X,Y,J$ in $\V$ where $X$ is finitely presentable and $J$ is injective, the morphism $\eta_{XYJ} \colon X \otimes [Y,J] \longrightarrow [[X,Y],J]$ from \ref{eval-morphisms} is an isomorphism.
\end{prt}
\end{prp}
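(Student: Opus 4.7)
The plan is to treat the three parts sequentially, with each building on the previous.

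For part \prtlbl{a}, the crucial step is to verify that every dualizable object $P$ is finitely presentable in $\V_0$. Indeed, by \rmkref{P-dual} we have $[P,-] \cong [P,I]\otimes -$, so
\begin{equation*}
  \V_0(P,-) \,\cong\, \V_0(I,[P,-]) \,\cong\, \V_0(I,\, [P,I]\otimes -)
\end{equation*}
preserves filtered colimits, since $I$ is finitely presentable by assumption and $[P,I]\otimes -$ preserves all colimits. Combined with \lemref{lem-dualizable}, this lets one enlarge the dualizable generating set to a set of finitely presentable generators that is closed under finite tensor products; the argument from the proof of \prpref{Grothendieck-lpb}, applied with $\gamma=\lambda=\aleph_0$ (and using the standard fact $\aleph_0 \vartriangleleft \aleph_0$ so that no passage to a successor cardinal is needed), verifies conditions \rqmlbl{1}--\rqmlbl{3} of \ref{lpb} for $\lambda=\aleph_0$.

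For part \prtlbl{b}, I would exploit the key observation that for dualizable $P$ the functor $P\otimes -$ is \emph{exact}: it is a left adjoint by construction, and simultaneously a right adjoint via the isomorphism $P\otimes - \cong [[P,I],-]$ from \rmkref{P-dual}. Given an injective $J$ and a short exact sequence $0\to A\to B\to C\to 0$ in $\V_0$, exactness of $P\otimes -$ together with injectivity of $J$ yields, after invoking the tensor--hom adjunction, an exact sequence of abelian groups
\begin{equation*}
  0 \to \V_0(P,[C,J]) \to \V_0(P,[B,J]) \to \V_0(P,[A,J]) \to 0\;.
\end{equation*}
Left exactness $0\to [C,J]\to [B,J] \to [A,J]$ is automatic, since $[-,J]$ is right adjoint to itself and hence converts colimits to limits. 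The remaining point, that $[B,J]\to [A,J]$ is an epimorphism in $\V_0$, follows from the surjectivity on generators: writing $q\colon [A,J]\to \Coker$ for the canonical quotient, every morphism $\psi\colon P\to [A,J]$ from a dualizable generator $P$ lifts along $[B,J]\to [A,J]$, so $q\psi=0$; because the dualizable generators generate, the composition of $q$ with the canonical epimorphism $\bigoplus_{P,\psi}P\to [A,J]$ vanishes, forcing $q=0$.

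For part \prtlbl{c}, I would first note that $\eta_{PYJ}$ is an isomorphism whenever $P$ is dualizable, by a direct computation using \rmkref{P-dual} twice:
\begin{equation*}
  P\otimes [Y,J] \,\cong\, [[P,I],[Y,J]] \,\cong\, [[P,I]\otimes Y, J] \,\cong\, [[P,Y],J]\;.
\end{equation*}
For a general finitely presentable $X$, part \prtlbl{a} allows one to find a presentation $P_1\to P_0 \to X\to 0$ in which $P_0,P_1$ are finite direct sums of dualizable generators, and hence dualizable by \lemref{lem-dualizable}. Both functors $-\otimes [Y,J]$ and $[[-,Y],J]$ are right exact in the first variable: the former because $-\otimes -$ is a left adjoint, the latter because $[-,Y]$ converts $P_1\to P_0\to X\to 0$ into a left exact sequence $0\to [X,Y]\to [P_0,Y]\to [P_1,Y]$ which, after splitting off the image of $[P_0,Y]\to[P_1,Y]$ and applying the exact functor $[-,J]$ from part \prtlbl{b}, becomes a right exact sequence $[[P_1,Y],J]\to [[P_0,Y],J]\to [[X,Y],J]\to 0$. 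Naturality of $\eta$ then produces a commutative ladder with exact rows whose first two vertical arrows are isomorphisms, and the Five Lemma concludes.

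The main obstacle lies in part \prtlbl{b}: promoting the surjectivity of $\V_0(G,f)$ for generators $G$ to the statement that $f$ is an epimorphism in $\V_0$ is delicate, because $\V_0(G,-)$ is only left exact and one cannot in general test cokernels against a generating set in the naive way. The resolution depends on the defining property of a generating set---namely that the canonical map $\bigoplus G \to Z$ is itself an epimorphism---combined with the observation that the lifts produced above force the composition of the quotient map with this canonical epimorphism to vanish.
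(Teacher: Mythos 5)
Parts (b) and (c) of your proposal are essentially sound. For (b) you give a self-contained argument (exactness of $P\otimes -$ for dualizable $P$, then promoting surjectivity of $\V_0(P,[B,J])\to\V_0(P,[A,J])$ to an epimorphism $[B,J]\to[A,J]$ by precomposing the cokernel with the canonical epimorphism $\bigoplus_{P,\psi}P\to[A,J]$); this is a correct unpacking of what the paper instead obtains by observing that $\bigoplus_{P\in\mathcal{P}}P$ is a $\otimes$-flat generator and citing \cite[Lem.~3.1]{SS19}. Part (c) coincides with the paper's proof.

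The genuine gap is in part (a), in the verification of condition \rqmlbl{3} of \ref{lpb}. Re-running the proof of \prpref{Grothendieck-lpb} with $\gamma=\lambda=\aleph_0$ does not go through. That proof hinges on \cite[Rem.~2.15]{AR}, which writes a $\lambda$-presentable object as a retract of a $\lambda$-small directed colimit of $\gamma$-presentable objects; for $\gamma=\lambda=\aleph_0$ an ``$\aleph_0$-small directed colimit'' is a finite directed colimit, hence just one of its own terms, so the statement degenerates to the triviality that a finitely presentable object is a retract of a finitely presentable object. It says nothing about expressing an arbitrary finitely presentable object in terms of your tensor-closed set of dualizable generators (and the statement that every finitely presentable object is a retract of such a generator is false in general: in $\Mod{R}$ it would force every finitely presentable module to be projective). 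If instead one takes $\mathcal{S}$ to be all finitely presentable objects, as in the proof of \prpref{Grothendieck-lpb}, then the requirement that the objects of $\mathcal{S}'$ be $\aleph_0$-presentable is precisely the claim to be proved, so the argument is circular. (The fact $\aleph_0\vartriangleleft\aleph_0$ you invoke is true but does not rescue this.) The missing ingredient is the result the paper imports from Breitsprecher \cite[Satz~(1.11)]{Bre70}: in a Grothendieck category generated by a set of finitely presentable objects, every finitely presentable $X$ admits a presentation $P_1\to P_0\to X\to 0$ with $P_0,P_1$ finite direct sums of generators, hence dualizable by \lemref{lem-dualizable}; condition \rqmlbl{3} then follows by tensoring two such presentations and using right exactness of $\otimes$. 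Note that your part (c) already relies on exactly this presentation while attributing it to part (a), where your argument never establishes it.
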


\begin{proof}
\proofoftag{a} As $I$ is finitely presentable, condition \ref{lpb}\rqmlbl{2} holds with \mbox{$\lambda = \aleph_0$}. Note that every dualizable object $P$ is finitely presentable. Indeed, $[P,-] \colon \V_0 \to \V_0$ preserves colimits as it is naturally isomorphic to $[P,I] \otimes -$ by \rmkref{P-dual}, and $\V_0(I,-)$ preserves directed~colimits as $I$ is finitely presentable. Thus $\V_0(P,-) \cong \V_0(I,[P,-])$ preserves directed colimits. 

As $\V_0$ is a Grothendieck category generated by a set of finitely presentable (even dualizable) objects, it is a locally finitely
presentable Grothendieck category in the sense of Breitsprecher \cite[Dfn.~(1.1)]{Bre70}. Hence $\V_0$ is also a locally finitely
presentable category in the ordinary sense, see \cite[Satz (1.5)]{Bre70} and \cite[(2.4)]{CB}, so condition \ref{lpb}\rqmlbl{1} holds~with~$\lambda = \aleph_0$. 

To see that \ref{lpb}(3) holds, i.e. that the class of finitely presentable objects is closed under $\otimes$, note that as $\V_0$ is generated by a set of dualizable objects, \cite[Satz (1.11)]{Bre70} and \lemref{lem-dualizable} yield that an object $X$ is finitely presentable if and only if there is an exact sequence
\begin{equation}
  \label{eq:PPX}
  P_1 \longrightarrow P_0 \longrightarrow X \longrightarrow 0
\end{equation}
with $P_0,P_1$ dualizable. Now let $Y$ be yet a finitely presentable object and choose an exact sequence $Q_1 \to Q_0 \to Y \to 0$ with $Q_0,Q_1$ dualizable. It is not hard to see that there is an exact sequence $(P_1 \otimes Q_0) \oplus (P_0 \otimes Q_1) \to P_0 \otimes Q_0 \to X\otimes Y \to 0$, cf.~\cite[Chap.~II\S3.6 Prop.~6]{Bou98}, so it follows from \lemref{lem-dualizable} that $X \otimes Y$ is finitely presentable.

\proofoftag{b} As $\V_0$ is generated by a set, say, $\mathcal{P}$ of dualizable objects, the category $\V_0$ has a $\otimes$-flat generator, namely $\bigoplus_{P \in \mathcal{P}} P$. The conclusion now follows from \cite[Lem. 3.1]{SS19}.

\proofoftag{c} As noted in the proof of \prtlbl{a}, if $X$ is a finitely presentable object there exist dualizable objects $P_0$ and $P_1$ and an exact sequence \eqref{PPX}. It induces a commutative diagram in $\V_0$:
\begin{equation}
  \label{eq:eta}
  \begin{gathered}
  \xymatrix{
    P_1 \otimes [Y,J] \ar[r] \ar[d]^-{\eta_{P_1YJ}}
    &
    P_0 \otimes [Y,J] \ar[r] \ar[d]^-{\eta_{P_0YJ}}
    &
    X \otimes [Y,J] \ar[r] \ar[d]^-{\eta_{XYJ}}    
    &
    0
    \\
    [[P_1,Y],J] \ar[r] 
    &
    [[P_0,Y],J] \ar[r] 
    &
    [[X,Y],J] \ar[r] 
    &
    0\;.\mspace{-8mu}    
  }
  \end{gathered}
\end{equation}
In this diagram, the upper row is exact by right exactness of the functor $- \otimes [Y,J]$. The sequence $0 \to [X,Y] \to [P_0,Y] \to [P_1,Y]$ is exact by left exactness of the functor $[-,Y]$. Thus, if $J$ is injective, and hence also internally injective by part \prtlbl{b}, we get exatcness of the lower row in \eqref{eta}. Thus, to prove that $\eta_{XYJ}$ is an isomorphism, it suffices by the Five Lemma to show that $\eta_{P_0YJ}$ and $\eta_{P_1YJ}$ are isomorphisms. However, for every dualizable object $P$ and arbitrary objects $Y,Z$, the map $\eta_{PYZ}$ is an isomorphism. Indeed, \rmkref{P-dual}, the adjunction $(-\otimes Y) \dashv [Y,-]$, and condition \ref{dualizable}\eqclbl{ii} yield isomorphisms:
\begin{equation*}
  P \otimes [Y,Z] \,\cong\, [[P,I],[Y,Z]] \,\cong\, [[P,I] \otimes Y,Z] \,\cong\, [[P,Y],Z]\;.
\end{equation*}  
It is straightforward to verify that the composite of these isomorphisms is $\eta_{PYZ}$. 
\end{proof}

\begin{lem}
\label{lem:product} 
For $\V$ as in \stpref{Setup-cosmos-Grothendieck-strong}, the following assertions hold.
\begin{prt}
\item For any family $\{B_q\}_{q \in Q}$ of objects and every finitely presentable object $A$ in $\V_0$, the next canonical morphism is an isomorphism:
\begin{equation*}
  \big(\textstyle\prod_{q \in Q}\mspace{1mu}[B_q,E]\,\big) \otimes A \stackrel{\cong}{\longrightarrow}
  \textstyle\prod_{q \in Q} \big([B_q,E] \otimes A \big)\;.
\end{equation*}
\item Assume that $\V_0$ satisfies Grothendieck's axiom \textnormal{(AB4*)}, i.e.~the product of a family of epimorphisms is an epimorphism. For any family $\{C_q\}_{q \in Q}$ of objects and every finitely presentable object $A$ in $\V_0$, the next canonical morphism is an isomorphism:
\begin{equation*}
  \big(\textstyle\prod_{q \in Q} C_q \big) \otimes A \stackrel{\cong}{\longrightarrow}
  \textstyle\prod_{q \in Q} \big( C_q \otimes A \big)\;.
\end{equation*}
\end{prt}
\end{lem}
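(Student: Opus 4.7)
The strategy for both parts is the following uniform reduction. By the proof of \prpref{aleph0-base}\prtlbl{a}, every finitely presentable $A \in \V_0$ admits a right exact presentation
\begin{equation*}
  P_1 \longrightarrow P_0 \longrightarrow A \longrightarrow 0
\end{equation*}
with $P_0, P_1$ dualizable, and for any dualizable $P$ the functor $P \otimes -$ is a right adjoint (since $P \otimes - \cong [[P,I], -]$ by \rmkref{P-dual}), hence preserves arbitrary products. Tensoring the presentation against a product $\prod_q M_q$ therefore yields a right exact top row and canonical vertical isomorphisms $P_i \otimes \prod_q M_q \stackrel{\cong}{\to} \prod_q (P_i \otimes M_q)$ for $i = 0,1$. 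The problem in each part reduces to showing that the bottom row
\begin{equation*}
  \prod_q (P_1 \otimes M_q) \longrightarrow \prod_q (P_0 \otimes M_q) \longrightarrow \prod_q (A \otimes M_q) \longrightarrow 0
\end{equation*}
is right exact as well; a cokernel-of-iso-is-iso argument then forces the desired isomorphism in the third column.

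For part \prtlbl{b}, axiom \textnormal{(AB4*)} asserts precisely that products in $\V_0$ preserve epimorphisms, and by a short diagram chase this implies that products of right exact sequences remain right exact. The bottom row is the product of the right exact sequences $P_1 \otimes C_q \to P_0 \otimes C_q \to A \otimes C_q \to 0$, so it is right exact; this finishes the case.

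For part \prtlbl{a} we lack \textnormal{(AB4*)}, so we must establish right exactness of the bottom row by another route that exploits the special form $M_q = [B_q, E]$. The plan is: apply the contravariant left exact functor $[-, B_q]$ to the presentation to get a left exact sequence $0 \to [A, B_q] \to [P_0, B_q] \to [P_1, B_q]$; take direct sums over $q$ (which is exact since $\V_0$ is Grothendieck); then apply the exact functor $[-, E]$ (exact by Lewis' condition \textbf{IiII}, established in \prpref{aleph0-base}\prtlbl{b}); and finally translate the resulting right exact sequence back via the standard isomorphism $[\bigoplus_q X_q, E] \cong \prod_q [X_q, E]$ and the homomorphism evaluation isomorphism $\eta_{P_i B_q E} \colon P_i \otimes [B_q, E] \stackrel{\cong}{\to} [[P_i, B_q], E]$ from \prpref{aleph0-base}\prtlbl{c} (which applies because $P_0, P_1$ are dualizable, hence finitely presentable by the same proof of \prpref{aleph0-base}\prtlbl{a}). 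The outcome is precisely the desired right exactness of the bottom row.

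The principal obstacle I anticipate is naturality bookkeeping in part \prtlbl{a}: one must verify that the chain of identifications above truly realises the \emph{canonical} comparison map $\alpha \colon (\prod_q [B_q, E]) \otimes A \to \prod_q ([B_q, E] \otimes A)$ as the third vertical in the ladder, rather than only delivering some (a priori unrelated) isomorphism between source and target. This is a routine but error-prone step, and should be settled by tracing $\alpha$ through each of the four natural isomorphisms at once on a single diagram.
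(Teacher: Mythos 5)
Your part \prtlbl{b} is essentially the paper's own argument: the same dualizable presentation $P_1 \to P_0 \to A \to 0$ from the proof of \prpref{aleph0-base}\prtlbl{a}, the same observation that $-\otimes P_i \cong [[P_i,I],-]$ is a right adjoint and hence preserves products, axiom (AB4*) for right exactness of the bottom row, and the Five Lemma. In part \prtlbl{a} you genuinely diverge. The paper proves \prtlbl{a} with no presentation at all, via the direct chain of natural isomorphisms
\begin{equation*}
  \big(\textstyle\prod_{q}[B_q,E]\big)\otimes A
  \,\cong\, \big[\textstyle\bigoplus_{q}B_q,E\big]\otimes A
  \,\cong\, \big[\big[A,\textstyle\bigoplus_{q}B_q\big],E\big]
  \,\cong\, \big[\textstyle\bigoplus_{q}[A,B_q],E\big]
  \,\cong\, \textstyle\prod_{q}\big[[A,B_q],E\big]
  \,\cong\, \textstyle\prod_{q}\big([B_q,E]\otimes A\big)\;,
\end{equation*}
using that $[-,E]$ turns coproducts into products, the evaluation isomorphism of \prpref{aleph0-base}\prtlbl{c} twice, and the fact that $[A,-]$ preserves coproducts for finitely presentable $A$ (via \cite[Lem.~2.6 and Cor.~3.3]{BQR98}). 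You instead run the same ladder reduction as in \prtlbl{b} and supply the missing right exactness of the bottom row by applying the exact functor $[-,E]$ to the left exact sequence $0 \to \bigoplus_q[A,B_q] \to \bigoplus_q[P_0,B_q] \to \bigoplus_q[P_1,B_q]$ (direct sums are exact since $\V_0$ is Grothendieck, and $[-,E]$ is exact by \textbf{IiII}, i.e.\ \prpref{aleph0-base}\prtlbl{b}), then translating back through $[\bigoplus_q X_q,E]\cong\prod_q[X_q,E]$ and the $\eta$-isomorphisms. Both routes are correct and both rest on \prpref{aleph0-base}\prtlbl{c}; yours is more uniform across the two parts and disposes of the ``is it the canonical map'' question cleanly, since the Five Lemma applies to the ladder whose verticals are by definition the canonical comparison morphisms (which make the squares commute by naturality), whereas the paper's is shorter and avoids both the presentation and Lewis's condition \textbf{IiII}, at the price of invoking the enriched fact that $[A,-]$ preserves coproducts. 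The naturality bookkeeping you flag is indeed routine: $\eta$ is natural in all three variables, so the identifications carry the maps of the bottom row to the duals of $[f_i,B_q]$ as required.
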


\begin{proof}
 \proofoftag{a} In the computation below, the $1^\mathrm{st}$ and $4^\mathrm{th}$ isomorphisms follow as $[-,E]$ converts coproducts to products; the $2^\mathrm{nd}$ and $5^\mathrm{th}$ isomorphisms follow from \prpref{aleph0-base}\prtlbl{c}; and the $3^\mathrm{rd}$ isomorphism holds as $[A,-]$ preserves directed colimits by \cite[Lem.~2.6 and Cor.~3.3]{BQR98} since $A$ is finitely presentable (as mentioned in the footnote of \stpref{Setup-cosmos-Grothendieck}, we can apply the theory of \cite{BQR98} as $\V$ is a locally presentable base).
  \begin{align*}
    \textstyle\big(\prod_{q \in Q}\, [B_q,E]\,\big) \otimes A
    &\,\cong\,
    \textstyle [\mspace{1mu}\bigoplus_{q \in Q}B_q,E] \otimes A
    \,\cong\,
    \textstyle [\mspace{1mu}[A,\bigoplus_{q \in Q}B_q],E]
    \,\cong\,    
    \textstyle [\mspace{1mu}\bigoplus_{q \in Q}\,[A,B_q],E]
    \\
    &\,\cong\,
    \textstyle \prod_{q \in Q}\,[\mspace{1mu}[A,B_q],E]
    \,\cong\,
    \textstyle\prod_{q \in Q} \big([B_q,E] \otimes A \big)\;.
  \end{align*}
  
  \proofoftag{b} Assume that $\V_0$ satisfies (AB4*). We must show that $-\otimes A$ preserves \textsl{all} products. By the proof of \prpref{aleph0-base}\prtlbl{a} there is an exact sequence $P_1 \to P_0 \to A \to 0$ with $P_0,P_1$ dualizable. In the induced commutative diagram below, the upper row is exact by right exactness of the functor $\big(\textstyle\prod_{q \in Q} C_q \big) \otimes -$, and the lower row is exact as $\V_0$ satisfies~(AB4*).
\begin{equation*}
  \xymatrix{
    \big(\textstyle\prod_{q \in Q} C_q \big) \otimes P_1 \ar[r] \ar[d]^-{\cong}
    &
    \big(\textstyle\prod_{q \in Q} C_q \big) \otimes P_0 \ar[r] \ar[d]^-{\cong}
    &
    \big(\textstyle\prod_{q \in Q} C_q \big) \otimes A \ar[r] \ar[d]
    &
    0
    \\
    \textstyle\prod_{q \in Q} \big( C_q \otimes P_1 \big) \ar[r] 
    &
    \textstyle\prod_{q \in Q} \big( C_q \otimes P_0 \big) \ar[r] 
    &
    \textstyle\prod_{q \in Q} \big( C_q \otimes A \big) \ar[r] 
    &
    0
  }
\end{equation*}  
The two leftmost vertical morphisms are isomorphisms as the functor $-\otimes P_n$ preserves products, indeed, by \rmkref{P-dual} this functor is naturally isomorphic to $[[P_n,I],-]$, which is a right adjoint. By the Five Lemma, the righmost morphism is an isomorphism too.  
\end{proof}

Some important abelian categories fail to satisfy Grothendieck's axiom (AB4*). For instance, this is often the case for the category of quasi-coherent sheaves on a scheme; see \cite[Exa.~4.9]{Krause}. Fortunately, we shall not need the strong conclusion in \lemref{product}\prtlbl{b} (we have only included it for completeness), as the weaker part \prtlbl{a} is sufficient for our purpose (the proof of \prpref{injective} below). We shall also need the next general lemma.

\begin{lem}
\label{lem:functor-summand}
Let $\mathcal{C}$ and $\mathcal{D}$ be additive and idempotent complete categories, $\upPhi \colon \mathcal{C} \to \mathcal{D}$ a fully faithful additive functor, and $C \in \mathcal{C}$ and $D \in \mathcal{D}$ objects. If $D$ is a direct summand in $\upPhi(C)$, then $D$ has the form $D \cong \upPhi(C')$ for some direct summand $C'$ in $C$.
\end{lem}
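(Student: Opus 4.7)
The plan is to reduce the statement to the splitting of an idempotent in $\mathcal{C}$ via the fully faithful functor $\upPhi$. Since $D$ is a direct summand of $\upPhi(C)$, we obtain morphisms $i \colon D \to \upPhi(C)$ and $p \colon \upPhi(C) \to D$ in $\mathcal{D}$ with $p \circ i = \mathrm{id}_D$. The composite $e := i \circ p \colon \upPhi(C) \to \upPhi(C)$ is then an idempotent endomorphism in $\mathcal{D}$.

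Next, because $\upPhi$ is full and faithful, it induces a bijection between $\mathcal{C}(C,C)$ and $\mathcal{D}(\upPhi(C),\upPhi(C))$. Hence there is a unique endomorphism $\varepsilon \colon C \to C$ with $\upPhi(\varepsilon) = e$, and faithfulness applied to the equality $\upPhi(\varepsilon \circ \varepsilon) = e \circ e = e = \upPhi(\varepsilon)$ shows that $\varepsilon$ is itself idempotent in $\mathcal{C}$.

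Since $\mathcal{C}$ is idempotent complete, $\varepsilon$ splits: there exist an object $C' \in \mathcal{C}$ and morphisms $j \colon C' \to C$ and $q \colon C \to C'$ with $q \circ j = \mathrm{id}_{C'}$ and $j \circ q = \varepsilon$. In particular, $C'$ is a direct summand of $C$. Applying $\upPhi$ gives $\upPhi(j) \circ \upPhi(q) = e = i \circ p$ together with $\upPhi(q) \circ \upPhi(j) = \mathrm{id}_{\upPhi(C')}$, so $\upPhi(C')$ is the splitting of the idempotent $e$ in $\mathcal{D}$. As idempotents in any category split uniquely up to isomorphism (when they split), the object $D$, which also splits $e$ via $(i,p)$, must be isomorphic to $\upPhi(C')$; explicitly, the isomorphism is $p \circ \upPhi(j) \colon \upPhi(C') \to D$ with inverse $\upPhi(q) \circ i$.

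No step here looks genuinely hard; the only mild subtlety is keeping track of the fact that one needs $\mathcal{C}$ (and not $\mathcal{D}$) to be idempotent complete in order to lift the splitting of $e$ along $\upPhi$. Idempotent completeness of $\mathcal{D}$ is used implicitly in the very hypothesis that $D$ is a direct summand of $\upPhi(C)$, but the crucial role is played by idempotent completeness of $\mathcal{C}$, which produces the candidate $C'$.
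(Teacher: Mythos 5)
Your proof is correct and follows essentially the same route as the paper's: lift the idempotent $ip$ along the fully faithful functor to an idempotent on $C$, split it there using idempotent completeness of $\mathcal{C}$, and check that the explicit maps $p\circ\upPhi(j)$ and $\upPhi(q)\circ i$ are mutually inverse (these are exactly the maps $k\upPhi(f)$ and $\upPhi(g)h$ in the paper's notation). Your closing observation is also apt: idempotent completeness of $\mathcal{D}$ is not actually needed, only that of $\mathcal{C}$.
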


\begin{proof}
  Since the functor $\upPhi$ is fully faithful and additive, it induces a ring isomorphism,
  \begin{equation}
  \label{eq:End}
    \mathrm{End}_{\mathcal{C}}(C,C) \stackrel{\cong}{\longrightarrow} \mathrm{End}_{\mathcal{D}}(\upPhi(C),\upPhi(C))
    \qquad \textnormal{given by} \qquad f \longmapsto \upPhi(f)\;.
  \end{equation}
  As $D$ is a direct summand in $\upPhi(C)$ there exist morphisms $h \colon D \to \upPhi(C)$ and $k \colon \upPhi(C) \to D$ in $\mathcal{D}$ such that $kh = \mathrm{id}_D$, and thus $hk$ is an idempotent element in  $\mathrm{End}_{\mathcal{D}}(\upPhi(C),\upPhi(C))$. By the ring isomorphism \eqref{End} there is an idempotent $e$ in $\mathrm{End}_{\mathcal{C}}(C,C)$ with $\upPhi(e) = hk$. As $\mathcal{C}$ is idempotent complete there is an object $C' \in \mathcal{C}$ and morphisms $f \colon C' \to C$ and $g \colon C \to C'$ such that $gf = \mathrm{id}_{C'}$ and $fg = e$, in particular, $C'$ is a direct summand in $C$. The morphisms
\begin{equation*}
  \xymatrix{
  \upPhi(C') \ar[r]^-{\upPhi(f)} & \upPhi(C) \ar[r]^-{k} & D
  }
  \qquad \textnormal{and} \qquad
  \xymatrix{
  \upPhi(C') & \upPhi(C) \ar[l]_-{\upPhi(g)} & D \ar[l]_-{h}
  }
\end{equation*}
satisfy $k\upPhi(f) \circ \upPhi(g)h = \mathrm{id}_D$ and $\upPhi(g)h\circ k\upPhi(f) = \mathrm{id}_{\upPhi(C')}$, so $\upPhi(C') \cong D$ as claimed.
\end{proof}

\begin{prp}
\label{prp:injective} 
A $\V$-functor $H \colon \fp{\V} \to \V$ is an injective object in $[\fp{\V},\V]_0$ if and only if it has the form $H \cong \upTheta_0(X)$ for some geometrically pure injective object $X \in \V_0$.
\end{prp}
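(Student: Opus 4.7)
The plan is to treat both implications via \prpref{gpi-product}, which realises every geometrically pure injective as a direct summand of a product $\prod_q [B_q, E]$ with $B_q \in \fp{\V}$, together with the cogenerator statement in \lemref{c1}. The key preliminary fact I would record is that $\upTheta_0([B, E]) \cong [\fp{\V}(-, B), E]$ in $[\fp{\V}, \V]_0$ for each $B \in \fp{\V}$: evaluating at $B' \in \fp{\V}$ gives $[B, E] \otimes B'$ on one side, and \prpref{aleph0-base}\prtlbl{c} supplies a $\V$-natural isomorphism with $[[B', B], E] = [\fp{\V}(B', B), E]$. Thus, by \lemref{c1}\prtlbl{b}, each $\upTheta_0([B, E])$ is injective in $[\fp{\V}, \V]_0$. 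Combining \lemref{product}\prtlbl{a} (which handles the interaction of products of objects of the form $[B_q, E]$ with $\otimes A$ for $A \in \fp{\V}$) with the objectwise formula for products in $[\fp{\V}, \V]_0$, this promotes to a natural isomorphism
\begin{equation*}
  \upTheta_0\bigl(\textstyle\prod_q [B_q, E]\bigr) \,\cong\, \textstyle\prod_q \upTheta_0([B_q, E])
\end{equation*}
for every family $\{B_q\}$ in $\fp{\V}$.

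For the ``if'' direction, given $X$ geometrically pure injective in $\V_0$, \prpref{gpi-product} writes $X$ as a summand of some $\prod_q [B_q, E]$ with $B_q \in \fp{\V}$; applying $\upTheta_0$ and the displayed isomorphism makes $\upTheta_0(X)$ a summand of a product of injectives in $[\fp{\V}, \V]_0$, which is itself injective.

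For the ``only if'' direction, let $H$ be injective in $[\fp{\V}, \V]_0$. \lemref{c1}\prtlbl{a}, applied with the cogenerator $\mathcal{S}=\{E\}$ of $\V_0$, shows that $\{[\fp{\V}(-, K), E]\}_{K \in \fp{\V}}$ is cogenerating in $[\fp{\V}, \V]_0$. Hence $H$ admits a monomorphism into some $\prod_q [\fp{\V}(-, K_q), E]$; by injectivity of $H$ the mono splits, so $H$ is a direct summand of this product. The preliminary identifications realise it as $\upTheta_0(\prod_q [K_q, E])$. Since $I \in \fp{\V}$, \thmref{main} ensures that $\upTheta_0$ is fully faithful, and $\V_0$ is idempotent complete because it is abelian, so \lemref{functor-summand} produces $X \in \V_0$, necessarily a direct summand of $\prod_q [K_q, E]$, with $H \cong \upTheta_0(X)$. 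But $\prod_q [K_q, E]$ is itself geometrically pure injective by \prpref{gpi-product}, and any direct summand of a pure injective is pure injective, so $X \in \PureInj{\V_0}$.

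The step I expect to be the main obstacle is the preliminary identification $\upTheta_0(\prod_q [B_q, E]) \cong \prod_q \upTheta_0([B_q, E])$: the functor $\upTheta_0$ is a left adjoint (cf.~\lemref{induced-exact}) and a priori does not commute with products, but the special form of the factors together with the tensor-and-product calculation \lemref{product}\prtlbl{a} rescues product-preservation after evaluation at an arbitrary $A \in \fp{\V}$, which is exactly what the scheme requires. It is worth noting that \prpref{aleph0-base}\prtlbl{c}, which uses the dualizability hypothesis from \stpref{Setup-cosmos-Grothendieck-strong}, is essential in identifying $\upTheta_0([B, E])$ with the injective object $[\fp{\V}(-, B), E]$.
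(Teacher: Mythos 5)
Your proposal is correct and follows essentially the same route as the paper's proof: the identification $\upTheta_0([B,E])\cong[\fp{\V}(-,B),E]$ via \prpref{aleph0-base}\prtlbl{c}, the product-preservation supplied by \lemref{product}\prtlbl{a}, the cogenerating family from \lemref{c1}, and the descent of the direct summand through the fully faithful $\upTheta_0$ via \lemref{functor-summand} are exactly the paper's steps. The only cosmetic difference is that you organize the reduction in the ``if'' direction around summands of $\upTheta_0\bigl(\prod_q[B_q,E]\bigr)$ rather than first reducing to $X=\prod_q[B_q,E]$, which changes nothing of substance.
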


\begin{proof}  
  ``If'': We start by showing that $\upTheta_0(X)$ is injective in $[\fp{\V},\V]_0$ for every geometrically pure injective object $X$ in $\V$. By \prpref{gpi-product} (and additivity of the functor $\upTheta_0$), we may assume that $X=\prod_{q \in Q}[B_q,E]$ for some family $\{B_q\}_{q \in Q}$ of finitely presentable objects. It follows from \lemref{product}\prtlbl{a} (notice that this isomorphism is $\V$-natural in $A$) that:
\begin{equation*}
  \textstyle
  \upTheta_0(X) \,=\, \upTheta_0\big( \prod_{q \in Q}[B_q,E] \big)
  \,\cong\, 
  \prod_{q \in Q}\upTheta_0([B_q,E])\;.
\end{equation*} 
Thus we may further reduce the case where $X=[B,E]$ for a single finitely presentable~ob\-ject $B$. Now \prpref{aleph0-base}\prtlbl{c} yields the isomorphism in the following computation:
\begin{equation*}
    \upTheta_0(X) \,=\, ([B,E]\otimes-)\big|_{\fp{\V}}
    \,\cong\, [\mspace{1mu}[-,B],E]\mspace{1.5mu}\big|_{\fp{\V}}
    \,=\,
    [\fp{\V}(-,B),E]\;.
\end{equation*}
The latter in an injective object in $[\fp{\V},\V]_0$ by \lemref{c1}\prtlbl{b} with $\mathcal{K} = \fp{\V}$.

``Only if'': By \lemref{c1} with $\mathcal{K} = \fp{\V}$ and $\mathcal{S} = \{E\}$ we get that $\{\mspace{1mu}[\mspace{1mu}[-,B],E]\mspace{1mu}\}_{B\in \fp{\V}}$ is a cogenerating set of (injective) objects in $[\fp{\V},\V]_0$. So every $H$ in $[\fp{\V},\V]_0$ can be embedded into a product $F = \prod_{q \in Q}[\mspace{1mu}[-,B_q],E]$ where each $B_q$ is finitely presentable. Set 
\begin{equation*}
X\,=\,\textstyle\prod_{q \in Q}[B_q,E] \in \V\;;
\end{equation*}
this is a geometrically pure injective object by \lemref{dual}, and the arguments above show that $F \cong \upTheta_0(X)$. Thus we have an embedding $H \rightarrowtail \upTheta_0(X)$. Consequently, if $H$ is injective, then it is a direct summand in $\upTheta_0(X)$, and it follows from \lemref{functor-summand} that $H \cong \upTheta_0(X')$ for some direct summand $X'$ in $X$. As $X$ is geometrically pure injective, so is $X'$.
\end{proof}

We can now give the result that is explained in the beginning of the section.

\begin{thm}
\label{thm:main3}
Let $\V$ be as in \stpref{Setup-cosmos-Grothendieck-strong}. The underlying tensor embedding functor,
\begin{equation*}
\upTheta_0 \colon \V_0 \longrightarrow [\fp{\V},\V]_0
\qquad \text{given by} \qquad 
X \longmapsto (X\otimes-)|_{\fp{\V}}\;,
\end{equation*}
induces a commutative diagram of exact categories and exact functors,
\begin{equation}
  \label{eq:main3}
  \begin{gathered}
  \xymatrix@C=3pc{
    \big(\V_0,\mathscr{E}_\otimes\big) \ar[r]^-{\upTheta_0}_-{\simeq} & 
    \big(\AbsPure{[\fp{\V},\V]_0},\mathscr{E}_\mathrm{ab}\big)
    \\
    \big(\PureInj{\V_0},\mathscr{E}_\mathrm{split}\big) 
    \ar[u]^-{\mathrm{inc}} \ar[r]^-{\upTheta_0}_-{\simeq} & 
    \big(\Inj{[\fp{\V},\V]_0},\mathscr{E}_\mathrm{split}\big)\;,\mspace{-8mu} \ar[u]_-{\mathrm{inc}}
  }
  \end{gathered}
\end{equation}
where $\mathscr{E}_\otimes$ is the geometrically pure exact structure (\dfnref{geo-exact-structure}), $\mathscr{E}_\mathrm{ab}$ denotes the exact structure on $\AbsPure{[\fp{\V},\V]_0}$ induced by the abelian structure on $[\fp{\V},\V]_0$, and $\mathscr{E}_\mathrm{split}$ is the (trivial) split exact structure. Further, \textnormal{``$\mathrm{inc}$''} denotes the inclusion functor.

In this diagram, the vertical functors are equivalences of exact categories.
\end{thm}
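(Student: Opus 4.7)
My plan is to derive the theorem from \thmref{main2}, applied with $\lambda = \aleph_0$ (which is permitted by \prpref{aleph0-base}\prtlbl{a}). That theorem already gives an exact equivalence $\upTheta_0 \colon (\V_0, \mathscr{E}_\otimes) \xrightarrow{\simeq} (\operatorname{Ess.Im}\upTheta, \mathscr{E}_\mathrm{ab}|_{\operatorname{Ess.Im}\upTheta})$, with $\operatorname{Ess.Im}\upTheta$ extension-closed in $[\fp{\V},\V]_0$. The bottom equivalence in \eqref{main3} is immediate from \prpref{injective} (which identifies $\Inj{[\fp{\V},\V]_0}$ with $\upTheta_0(\PureInj{\V_0})$) combined with full faithfulness of $\upTheta_0$; split exactness of the bottom row is trivial, and commutativity of the square is routine. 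The core task is therefore to establish the identity $\operatorname{Ess.Im}\upTheta = \AbsPure{[\fp{\V},\V]_0}$.

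For the inclusion ``$\subseteq$'' I would take $X \in \V_0$ and invoke \prpref{dual2} to obtain a geometrically pure monomorphism $X \hookrightarrow [[X,E],E]$. By \lemref{i-ii} this lifts to a $\star$-pure embedding of $\upTheta_0(X)$ into $\upTheta_0([[X,E],E])$, whose target is injective in $[\fp{\V},\V]_0$ by \prpref{injective}. A standard diagram chase---using the lifting property of injectives and the right exactness of every functor $G \star -$ (\lemref{3T})---then shows that any object admitting a $\star$-pure embedding into an injective is absolutely pure in the sense of \dfnref{abspure}.

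The reverse inclusion ``$\supseteq$'' is the main obstacle. Given $H \in \AbsPure{[\fp{\V},\V]_0}$, I would combine \lemref{c1} (with $\mathcal{S} = \{E\}$), \prpref{aleph0-base}\prtlbl{c}, and \lemref{product}\prtlbl{a} to embed $H$ into $\upTheta_0(J_{\V})$ for $J_{\V} = \prod_q [B_q,E]$---a geometrically pure injective by \lemref{dual}---and absolute purity of $H$ upgrades this embedding, call it $\iota$ with cokernel $C$, to a $\star$-pure monomorphism. The plan is then to show that the counit $\theta_H \colon \upTheta_0(H(I)) \to H$ of the adjunction from \lemref{induced-exact} is an isomorphism, which places $H$ in $\operatorname{Ess.Im}\upTheta$. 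For this I would apply the exact functor $\mathrm{Ev}_I$ to the $\star$-pure sequence $0 \to H \to \upTheta_0(J_{\V}) \to C \to 0$, obtaining $0 \to H(I) \to J_{\V} \to C(I) \to 0$ in $\V_0$; apply $\upTheta_0$ back; and compare the two sequences via the naturality squares of $\theta$ (the middle vertical being $\theta_{\upTheta_0(J_{\V})} = \mathrm{id}$), invoking the 5-lemma to conclude that $\theta_H$ (and $\theta_C$) are isomorphisms.

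The hard technical point is showing that the evaluated sequence in $\V_0$ is \emph{geometrically pure}, equivalently (by \lemref{gps11}) that $\iota(I) \colon H(I) \to J_{\V}$ is a geometrically pure monomorphism; only with this hypothesis does \lemref{i-ii} promote the $\upTheta_0$-image of that sequence to a $\star$-pure short exact sequence---giving, in particular, the monicity on the left that the 5-lemma requires. I expect to verify this by testing the $\star$-purity of $\iota$ against the weights $G = [-,V]|_{\fp{\V}^{\mathrm{op}}}$ for $V \in \V$, exploiting the computation $G \star \upTheta_0(Y) \cong Y \otimes V$ from the proof of \lemref{i-ii}, the dualizable generation of $\V_0$ from \stpref{Setup-cosmos-Grothendieck-strong}, and the specific form of $J_{\V}$ as a product of duals (so that \prpref{aleph0-base}\prtlbl{c} and \lemref{product}\prtlbl{a} apply).
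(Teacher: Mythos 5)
Your overall architecture is the paper's: everything reduces to the identity $\operatorname{Ess.Im}\upTheta_0=\AbsPure{[\fp{\V},\V]_0}$, and your argument for ``$\subseteq$'' (a $\star$-pure embedding into an injective object via \prpref{dual2}, \lemref{i-ii} and \prpref{injective}, followed by a lift) is exactly the one in the paper; the bottom square is likewise handled the same way. The problem is ``$\supseteq$'', where your route has two genuine gaps. First, the step you yourself flag as the hard point---that $\iota(I)\colon H(I)\to J_{\V}$ is a geometrically pure monomorphism---is not carried out, and the sketched method does not work: the computation $G\star\upTheta_0(Y)\cong Y\otimes G(I)$ from the proof of \lemref{i-ii} is only available for objects in the essential image of $\upTheta_0$, so testing $\star$-purity of $\iota$ against $G=[-,V]$ gives monicity of $G\star H\to J_{\V}\otimes V$, where $G\star H$ is a weighted colimit you cannot identify with $H(I)\otimes V$ without already knowing $H\in\operatorname{Ess.Im}\upTheta_0$---which is the statement being proved. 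Concretely, naturality of $\theta$ applied to $\iota$ gives $\iota(A)\circ\theta_H(A)=\iota(I)\otimes A$ with $\iota(A)$ monic, so purity of $\iota(I)$ is \emph{equivalent} to monicity of $\theta_H$; your plan is circular at this point. (Testing only against dualizable $V$, as your appeal to dualizable generation suggests, proves nothing: $-\otimes P$ is exact for dualizable $P$, so every monomorphism passes that test.) Second, even granting that step, the 5-lemma does not deliver your conclusion: from a morphism of short exact sequences whose middle vertical arrow is an isomorphism one only gets that $\theta_H$ is monic and $\theta_C$ is epic, with $\ker\theta_C\cong\Coker{\theta_H}$ by the snake lemma---compare $0\to\mathbb{Z}\xrightarrow{2}\mathbb{Z}\to\mathbb{Z}/2\to0$ mapping to $0\to\mathbb{Z}\xrightarrow{=}\mathbb{Z}\to0\to0$. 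Closing this would require, e.g., that the cokernel $C$ is again absolutely pure, which you neither address nor is obvious.

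The paper avoids both problems by dualizing instead of evaluating. From the $\star$-pure embedding $H\rightarrowtail\upTheta_0(J)$, condition \eqclbl{ii} of \prpref{star} makes $[H(-),E]$ a direct summand of $[\upTheta_0(J)(-),E]\cong[-,[J,E]]$, an object in the image of the fully faithful Yoneda embedding $\upUpsilon_0$; \lemref{functor-summand} then yields $[H(-),E]\cong[-,Y]$ with $Y$ a summand of $[J,E]$, evaluation at $I$ identifies $Y\cong[H(I),E]$, one checks that the resulting isomorphism $[H(-),E]\cong[H(I)\otimes-,E]$ is $[\theta_H,E]$, and faithfulness (plus exactness) of $[-,E]$ forces $\theta_H$ to be an isomorphism. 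You should either adopt this argument or supply a genuinely new way to control $G\star H$ and $\theta_H$ for an absolutely pure $H$; as it stands, the ``$\supseteq$'' inclusion is unproved.
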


\begin{proof}
  The asserted equivalence of exact categories in the top of diagram \eqref{main3} will follow from \thmref{main}\prtlbl{b} and \corref{ab-equals-star} once we have proved the equality:
  \begin{equation}
    \label{eq:EssImAbsPure}
    \operatorname{Ess.Im}\upTheta_0 \,=\, \AbsPure{[\fp{\V},\V]_0}\;.
  \end{equation} 
  It then follows from \prpref{injective} that the equivalence in the top of \eqref{main3} restrics to the one in the bottom. We now prove the equality \eqref{EssImAbsPure}.
  
  ``$\subseteq$'': Let $H$ be in $\operatorname{Ess.Im}\upTheta_0$, that is,  $H \cong \upTheta_0(X)$ for some $X \in \V$. We must argue that every short exact sequence $0 \to \upTheta_0(X) \to F \to F' \to 0$ in $[\fp{\V},\V]_0$ is $\star$-pure exact. By \lemref{3T} it suffices to prove that $\upTheta_0(X) \rightarrowtail F$ is a \emph{$\star$-pure monomorphism}, meaning~that $G \star \upTheta_0(X) \to G \star F$ is monic for every $\V$-functor $G \colon \fp{\V}^\mathrm{op} \to \V$. By \prpref{dual2} there is a geometrically pure injective object $J$ and a geometrically pure monomorphism $X \rightarrowtail J$ in $\V_0$. Thus $\upTheta_0(X) \rightarrowtail \upTheta_0(J)$ is a $\star$-pure monomorphism by \lemref{i-ii}. As $\upTheta_0(J)$ is injective in $[\fp{\V},\V]_0$ by \prpref{injective}, the morphism $\upTheta_0(X) \to \upTheta_0(J)$ admits a lift:
\begin{equation*}
  \xymatrix{
  \upTheta_0(X) \ar[d] \ar@{>->}[r] & F \ar@{.>}[dl]
  \\
  \upTheta_0(J) & {}
  }
\end{equation*}  
As $\upTheta_0(X) \rightarrowtail \upTheta_0(J)$ is a $\star$-pure monomorphism, so is $\upTheta_0(X) \rightarrowtail F$.

``$\supseteq$'': Let $H$ be an absolutely pure object in $[\fp{\V},\V]_0$. As this category has enough injectives, it follows from \prpref{injective} that there exists a monomorphism $H \rightarrowtail \upTheta_0(J)$ for some geometrically pure injective object $J \in \V$. By assumption on $H$, this is even a $\star$-pure monomorphism, and thus $[H(-),E]$ is a direct summand in $[\upTheta_0(J)(-),E] \cong [-,[J,E]]$ by the equivalent conditions in \prpref{star}. By \cite[\S2.4]{Kelly} the Yoneda embedding
\begin{equation*}
  \upUpsilon_0 \colon \V_0 \longrightarrow [\fp{\V}^\mathrm{op},\V]_0
  \qquad \text{given by} \qquad 
  X \longmapsto [-,X]\big|_{\fp{V}}
\end{equation*}
is fully faithful, so because $[H(-),E]$ is a direct summand in $\upUpsilon_0([J,E]) = [-,[J,E]]$, it follows from \lemref{functor-summand} that $[H(-),E] \cong [-,Y]$ for some direct summand $Y$ in $[J,E]$. By evaluating this isomorphism on the unit object, it follows that $Y \cong [I,Y] \cong [H(I),E]$, so
\begin{equation*}
  [H(-),E] \,\cong\, [-,[H(I),E]] \,\cong\, [H(I) \otimes -,E]\;.
\end{equation*}
It can be verified that this $\V$-natural isomorphism is $[\theta_H,E]$ where
\begin{equation*}
  \theta_H \colon \upTheta_0(H(I)) = H(I) \otimes - \,\longrightarrow\, H(-)
\end{equation*}
is the $\V$-natural transformation from the proof of \lemref{induced-exact}.By \lemref{ev1} 
the functor $[-,E]$ is faithful, and hence it reflects isomorphisms. We conclude that $\theta_H$ is a $\V$-natural isomorphism, and hence $H$ belongs to $\operatorname{Ess.Im}\upTheta_0$.
\end{proof}

We end this paper with a follow-up of the discussion at the end of the Introduction.
  
\begin{rmk}
  \label{rmk:monoid}   
    Let $(\V,\otimes,I,[-,-])$ be a closed symmetric moniodal category and let $R$ be a \emph{monoid} in $\V$. Write $R\textnormal{-Mod}$ (respectively, $\textnormal{Mod-}R$) for the category of \emph{$R$-left-objects} (respectively, \emph{$R$-right-objects}) in $\V$; see \cite[\S2]{Par77}. Note that $R\textnormal{-Mod}$ and $\textnormal{Mod-}R$ are complete, cocomplete, abelian, or Grothendieck if $\V$ is so. Moreover, there are functors
\begin{align}
  \nonumber
  {}_R[-,-] \colon (R\textnormal{-Mod})^\mathrm{op} \times R\textnormal{-Mod} 
  &\longrightarrow \V\;,
  \\
  \label{eq:three-functors}
  [-,-]_R \colon (\textnormal{Mod-}R)^\mathrm{op} \times \textnormal{Mod-}R 
  &\longrightarrow \V\;, \quad \textnormal{and} 
  \\
  \nonumber
  - \otimes_R - \colon \textnormal{Mod-}R \times R\textnormal{-Mod} 
  &\longrightarrow \V\;,
\end{align}    
which behave in expected ways. For example, for $X \in \textnormal{Mod-}R$ the functor $[X,-]$ (which a priori is a functor from $\V$ to $\V$) takes values in $R\textnormal{-Mod}$ and yields a right adjoint of $X \otimes_R -$.

For any full subcategory $\A$ of $R\textnormal{-Mod}$ one can now consider the tensor embedding
\begin{equation*}
\upTheta_0 \colon \textnormal{Mod-}R \longrightarrow [\A,\V]_0
\qquad \text{given by} \qquad 
X \longmapsto (X\otimes_R-)|_{\A}\;.
\end{equation*}
With these functors at hand, we leave it to the reader to formulate appropriate versions of, for example, Theorems A--D from the Introduction and check how the existing proofs can be modified to show these. Concerning Theorem C one can use the adjunctions associated with the functors in \eqref{three-functors} to show that if $\V$ is locally $\lambda$-presentable, then so is $\textnormal{Mod-}R$. To prove that geometrically pure injective objects in $\textnormal{Mod-}R$ correspond to injective objects in $[\fp{R\textnormal{-Mod}},\V]_0$ (as in Theorem~D), a crucial input is the hypothesis that for all objects $X,Y \in \textnormal{Mod-}R$, where $X$ is finitely presentable, and $J \in \V$ is injective, the following canonical morphism is an isomorphism,
\begin{equation*}
  X \otimes_R [Y, J] \longrightarrow [[X,Y]_R, J]\;.
\end{equation*}

  We briefly mention a few examples. A monoid in $\Ab$ is nothing but a ring. For any ring $R$, the stalk complex $\stalk{R}$ and the disc complex $\disc{R}$ from \exaref{Ch} are monoids in $(\Ch{\mathbb{Z}},\tTen[\mathbb{Z}])$ and in $(\Ch{\mathbb{Z}},\mTen[\mathbb{Z}])$, respectively. This viewpoint allows one to deal with \exaref{Ch} also in the case where $R$ is non-commutative.
\end{rmk}

\def\cprime{$'$}
  \providecommand{\arxiv}[2][AC]{\mbox{\href{http://arxiv.org/abs/#2}{\tt
  arXiv:#2 [math.#1]}}}
  \providecommand{\oldarxiv}[2][AC]{\mbox{\href{http://arxiv.org/abs/math/#2}{\sf
  arXiv:math/#2
  [math.#1]}}}\providecommand{\MR}[1]{\mbox{\href{http://www.ams.org/mathscinet-getitem?mr=#1}{#1}}}
  \renewcommand{\MR}[1]{\mbox{\href{http://www.ams.org/mathscinet-getitem?mr=#1}{#1}}}
\providecommand{\bysame}{\leavevmode\hbox to3em{\hrulefill}\thinspace}
\providecommand{\MR}{\relax\ifhmode\unskip\space\fi MR }
\providecommand{\MRhref}[2]{%
  \href{http://www.ams.org/mathscinet-getitem?mr=#1}{#2}
}
\providecommand{\href}[2]{#2}


\end{document}